\newcommand\R{\mathbb{R}}
\newcommand\Hb{\mathbb{H}}
\newcommand\n[1]{\left\|#1\right\|}
\newcommand\ps[2]{\langle#1,#2\rangle}
\newcommand\bbeta{\boldsymbol\beta}
\newcommand\estim{\widehat{\boldsymbol\beta}_{\boldsymbol\lambda,\infty}}
\newcommand\cov{{\boldsymbol\Gamma}}
\newcommand\covhat{{\widehat{\boldsymbol\Gamma}}}
\newtheorem{theorem}{Theorem}
\newtheorem{lemma}{Lemma}
\newtheorem{proposition}{Proposition}
\newtheorem{corollary}{Corollary}
\begin{document}

\title{Lasso in infinite dimension: application to variable selection in functional multivariate linear regression}
\author{Angelina Roche \url{roche@ceremade.dauphine.fr} }


\maketitle

\begin{abstract} 
It is more and more frequently the case in applications that the data we observe come from one or more random variables taking values in an infinite dimensional space, e.g. curves. The need to have tools adapted to the nature of these data explains the growing interest in the field of functional data analysis. The model we study in this paper assumes a linear dependence between a quantity of interest and several covariates, at least one of which has an infinite dimension. To select the relevant covariates in this context, we investigate adaptations of the Lasso method. Two estimation methods are defined. The first one consists in the minimization of a Group-Lasso criterion on the multivariate functional space $\mathbf H$. The second one minimizes the same criterion but on a finite dimensional subspaces of $\mathbf H$ whose dimension is chosen by a penalized least squares method. We prove oracle inequalities of sparsity in the case where the design is fixed or random. To compute the solutions of both criteria in practice, we propose a coordinate descent algorithm. A numerical study on simulated and real data illustrates the behavior of the estimators. 
\end{abstract}


$\;$

%
%
\section{Introduction}
More and more often, the data we observe come from one or more random variables taking their values in a space of infinite dimension. This is the case, for example, for data that can be represented as curves. The need to develop tools adapted to the nature of the data explains the growing interest in the field of functional data analysis \citep{RS05,FV06,FR11}. It has proven to be very fruitful in many applications, for example in spectrometry \citep[see for example][]{Pham+10}, in the study of electroencephalograms \citep{di_multilevel_2009}, in biomechanics \citep{sorensen_quantification_2012} and in econometrics \citep{laurini_dynamic_2014}. 

In some contexts, and more and more often, the data are a finite number of curves. We call this case multidimensional functional data. This is the case in \citet{ACEV04} where the objective is to predict the ozone concentration of the next day from the ozone concentration curve, the $NO$ concentration curve, the $NO_2$ concentration curve, the wind speed curve and the wind direction of the current day. Another example comes from nuclear safety problems where the risk of failure of a nuclear reactor vessel in case of a loss of coolant accident is studied as a function of the evolution of the temperature, pressure and heat transfer parameter in the vessel \citep{Roche15}. It can also happen, perhaps more often, that the observed quantities are of different natures (curves and vectors or scalars). This case has motivated the study of partial linear models (see for example \citealt{Shin09,WSW21,XU202044}) where a quantity of interest $Y$ depends both on vectors and on functional covariates. 

In the case where the number of covariates, especially functional or infinite dimensional covariates, is large, it may be necessary to select the most relevant covariates for prediction, either to solve the computational problems posed by the complexity of the data or to obtain an interpretable prediction procedure. 

The objective of this paper is to study the link between a real response $Y$ and a vector of covariates $\mathbf X=(X^1,...,X^p)$ which can be of different nature (curves or vectors or scalar quantities). 
We assume that, for all $j=1,...,p$, $i=1,...,n$, $X^j_i\in \Hb_j$ where $(\Hb_j,\n{\cdot}_j,\ps\cdot\cdot_j)$ is a separable Hilbert space. Our covariate $\{\mathbf X_i\}_{1\leq i\leq n}$ is then in the product space $\mathbf{H}=\Hb_1\times...\times\Hb_p$, which is also a separable Hilbert space equipped with its natural scalar product 
$$\ps{\mathbf f}{\mathbf g} = \sum_{j=1}^p \ps{f_j}{g_j}_j \text{ for all }\mathbf f=(f_1,...,f_p), \mathbf g=(g_1,...,g_p)\in\mathbf{H}$$
and usual norm $\n{\mathbf f}=\sqrt{\ps{\mathbf f}{\mathbf f}}$. 

We suppose that our observations follow the \emph{multivariate functional linear model},
\begin{equation}\label{def:model}
 Y_i = \sum_{j=1}^p\ps{\beta_j^*}{X_i^j}_j +\varepsilon_i = \ps{\boldsymbol\beta^*}{\mathbf X_i}+\varepsilon_i,
 \end{equation}
where, $\boldsymbol\beta^*=(\boldsymbol\beta_1^*,...,\boldsymbol\beta_p^*)\in\mathbf H$ is unknown and $\left\{\varepsilon_i\right\}_{1\leq i\leq n}\sim_{i.i.d.}\mathcal N(0,\sigma^2)$. The covariates $\{\mathbf X_i\}_{1\leq i\leq n}$ can be either fixed elements of $\mathbf H$ (fixed design) or i.i.d centered random variables in $\mathbf H$ (random design) independent of $\left\{\varepsilon_i\right\}_{1\leq i\leq n}$. 

Note that our model does not require the $\mathbb H_j$'s to be functional spaces, we can have $\mathbb H_j=\mathbb R$ or $\mathbb H_j=\mathbb R^d$, for some $j\in\{1,...,p\}$. The case where $\mathbb H_j=\mathbb R$, for all $j=1,\hdots,p$ exactly corresponds to the classical multivariate regression model. 

The functional linear model, which corresponds to the case $p=1$ in the equation~\eqref{def:model}, has been widely studied. It has been defined by \citet{CFS1999} who proposed an estimator based on principal component analysis. Splines estimators have also been proposed by \citet{RD91,CFS2003,CKS09} as well as estimators based on the decomposition of the slope function $\boldsymbol\beta$ in the Fourier domain \citep{RS05,LH07,CJ10} or in a general basis \citep{CJ10b,CJ12}. In a similar context, we also mention the work of \citet{KM14} on Lasso. In this paper, it is assumed that the function $\boldsymbol\beta$ is well represented as a sum of a small number of well separated spikes. In the case where $p=2$, $\mathbb H_1$ a functional space and $\mathbb H_2=\mathbb R^d$, the model~\eqref{def:model} is called \emph{partial functional linear regression model} and has been studied for example by \citet{Shin09,SL12} who proposed principal component regression and ridge regression approaches for the estimation of the two coefficients of the model. 

Few works have been devoted to the multivariate functional linear model which corresponds to the case where $p\geq 2$ and the $\mathbb H_j$ are function spaces for all $j=1,\hdots,p$. To the best of our knowledge, the model was first mentioned in the work of \citet{CCS07} under the name \emph{multiple functional linear model}. An estimator of $\boldsymbol\beta$ is defined with an iterative backfitting algorithm and applied to the ozone prediction dataset initially studied by \citet{ACEV04}. Variable selection is performed by testing all possible models and selecting the one that minimizes the prediction error on a test sample. Let us also mention the work of \citet{CYC16} who consider a multivariate linear regression model with functional output. They define a consistent and asymptotically normal estimator based on the multivariate functional principal components initially proposed by \citet{CCY14}. 


In the case where the covariates are finite dimensional and $p$ is large, the usual approach to select variables is to use a penalty of type $\ell_1$. This case has been widely studied, with many variations and improvements. One of the most common variable selection methods, the Lasso \citep{Tibshirani96,CDS98}, consists of the minimization of a least squares criterion with an $\ell_1$ penalty. The statistical properties of the Lasso estimator are now well understood. Sparsity oracle inequalities have been obtained for predictive losses in particular in standard multivariate or nonparametric regression models \citep[see for example][]{BTW07,bickel_simultaneous_2009,Koltchinskii09,BPR11}. 

The Group-Lasso \citep{YL2006,CH08} addresses the case where the set of covariates can be partitioned into a number of groups. To take into account the group structure in the data, our model can be rewritten as $\mathbb H_j=\mathbb R^{d_j}$, $j=1,\hdots,p$, where $p$ is the number of groups and $d_j$ is the cardinal of the $j$-th group. \citet{HZ10} show that, under certain conditions called \emph{strong group sparsity}, the Group-Lasso penalty is more efficient than the Lasso penalty. \citet{lounici_oracle_2011} proved oracle inequalities for the prediction and $\ell_2$ estimation error that are optimal in the minimax sense. Their theoretical results also demonstrate that Group-Lasso can improve Lasso in prediction and estimation.  \citet{vandeGeer14} proved sharp oracle inequalities for general weakly decomposable regularization penalties, including Group-Lasso penalties.  This approach has proven fruitful in many settings such as time series \citep{CYZ14}, generalized linear models \citep{BLG14} in particular Poisson regression~\citep{IPR16} or logistic regression \citep{MGB08, Kwemou16}, the study of panel data \citep{DQS16}, the prediction of breast or prostate cancer \citep{FWYPLOPB16,ZCJML16}. The theoretical results were extended to the case where the errors are heteroscedastic by \citet{DHMS13}.

Drawing inspiration from \citet{lounici_oracle_2011} we define two criteria
\begin{equation}\label{eq:LASSOinf}
\widehat{\boldsymbol\beta}_{\boldsymbol\lambda,\infty}\in{\arg\min}_{\boldsymbol\beta=(\beta_1,...,\beta_p)\in\mathbf H}\left\{\frac1n\sum_{i=1}^n\left(Y_i-\ps{\boldsymbol\beta}{\mathbf X_i}\right)^2+2\sum_{j=1}^p\lambda_j\n{\beta_j}_j\right\},
\end{equation}
and
\begin{equation}\label{eq:LASSOproj}
\widehat{\boldsymbol\beta}_{\boldsymbol\lambda,m}\in{\arg\min}_{\boldsymbol\beta=(\beta_1,...,\beta_p)\in\mathbf H^{(m)}}\left\{\frac1n\sum_{i=1}^n\left(Y_i-\ps{\boldsymbol\beta}{\mathbf X_i}\right)^2+2\sum_{j=1}^p\lambda_j\n{\beta_j}_j\right\},
\end{equation}
where $\boldsymbol\lambda=(\lambda_1,...,\lambda_p)$ are positive parameters and $(\mathbf H^{(m)})_{m\geq 1}$ is a sequence of nested finite-dimensional subspaces of $\mathbf H$, to be specified later. 

The case where the product space $\mathbf H$ is of finite dimension has been widely treated (see the references above). However, few papers deal with the infinite-dimensional case. Most of the literature in functional data analysis naturally focuses on dimension reduction methods (mainly projection onto a spline basis or onto the principal component basis in~\citealt{RS05,FR11}) to reduce data complexity. More recently, clustering approaches have been considered \citep[see for example][]{D17} as well as variable selection methods using $\ell^1$ penalties. \citep{KXYZ16} have proposed a Lasso type penalty allowing to select the Karhunen-Loève coefficients of the functional variable simultaneously with the coefficients of the vector variable in the partial functional linear model (case $p=2$, $\Hb_1=\mathbb L^2(T)$, $\Hb_2=\R^d$ of the Model~\eqref{def:model}). Group-Lasso and adaptive Group-Lasso procedures have been proposed by \citet{AV14,AV16} to select the important observation points $t_1,...,t_n$ (\emph{impact points}) in a regression model where the covariates are the discrete values $(X(t_1),...,X(t_p))$ of a random function $X$.  Bayesian approaches have been proposed by \citet{GABP19} in the case where the $\beta_j^*$ are sparse step functions. The natural extension of the approaches developed in the field of functional data analysis in our context leads to the projected version of the criterion defined in equation~\eqref{eq:LASSOproj} with $\mathbf H^{(m)}$ generated by a multivariate splines basis or an fPCA basis. However, the projection step induces a bias that must be taken into account.   

Some recent contributions~\citep [see for example][]{GV16,S18} emphasize the need to work at the interface between high-dimensional statistics, functional data analysis, and machine learning to deal more effectively with specific problems of high-dimensional or infinite-dimensional data. Indeed, the problem of infinite-dimensional variable selection is also considered in the machine learning community, especially in the context of multiple-kernel learning. \citet{Bach08,NR08} proved the consistency of model estimation and selection, as well as prediction and estimation bounds for the Group-Lasso estimator, when the data belong to Reproducing Kernel Hilbert Spaces. In these papers, the criterion is minimized on the whole product space $\mathbf H$, leading to \eqref{eq:LASSOinf}. However, imposing that the data be in a Reproducing Kernel Hilbert Space is too restrictive in the domain of functional data because it implies a constraint on the unknown regularity of the data. To the best of our knowledge, the theoretical study of \eqref{eq:LASSOinf} has not been done when the data are in a general Hilbert space.  

Our approach also covers the case where $Y_i$ depends on a single functional variable $Z_i:T\to\mathbb R$ and we want to determine whether observing the entire curve $\{Z_i(t), t\in T\}$ is useful to predict $Y_i$ or whether it is sufficient to observe it on some subsets of $T$. For this purpose, we define $T_1,\hdots,T_p$ a partition of the set $T$ into subintervals and consider the restrictions $X_i^j:T_j\to\mathbb R$ of $Z_i$ on $T_j$. If the corresponding coefficient $\beta_j^*$ is zero, we know that $X_i^j$ is, a priori, irrelevant to predict $Y_i$ and, therefore, that the behavior of $Z_i$ on the interval $T_j$ has no significant influence on $Y_i$. The idea of using a Lasso type criterion or a Danzig selector in this context, called the FLIRTI method (for Functional LInear Regression That is Interpretable) has been developed by \citet{JWZ09}.



\subsection*{Contribution of the paper}

The properties of the solution of the Group-Lasso problem \eqref{eq:LASSOinf} have been studied for example by \cite{lounici_oracle_2011} under restricted eigenvalue type assumptions in the finite-dimensional case.  \cite{BT17} have improved these results by obtaining sharp versions of the sparsity oracle inequalities. The aim of this paper is to study the case where $\dim(\mathbf H)=+\infty$ and to answer the following questions: are we able to obtain sharp oracle inequalities when $\dim(\mathbf H)=+\infty$? How to compute the solution of a Lasso problem in this infinite-dimensional context ?

To answer the first question, we must first define a restricted eigenvalue condition (or an equivalent). Unfortunately, the question of the restricted eigenvalue assumption in an infinite dimensional space turns out to be a complex issue. Indeed, we first prove in Section~\ref{sec:RE} that no such hypothesis can be verified on the entire space $\mathbf H$ in infinite dimension, or even when the data dimension is too large.  We consider as an alternative, the minimal ratio $\tilde\kappa_n^{(m)}(s)$ between the empirical norm and the norm of $\mathbb H$ on the cone 
$$
\{\boldsymbol\delta\in\mathbf H^{(m)}, \exists J\subset\{1,\hdots,p\}, |J|\leq s, \sum_{j\notin J}\lambda_j\n{\delta_j}_j\leq 3\sum_{j\in J}\lambda_j\n{\delta_j}_j\}. 
$$
This quantity, supposed to be constant in finite dimension in the works of \cite{lounici_oracle_2011,BT17}, is seen here as a sequence which decreases towards 0 when $m=\dim(\mathbf H^{(m)})$ increases, at a rate which will determine the convergence rate of the final estimator. This rate of convergence thus plays the role of a regularity parameter. This is, to our knowledge, a new approach to the problem.

We prove in section~\ref{sec:ineg_oracle_empirical} a sharp oracle inequality for both criteria~\eqref{eq:LASSOinf} and~\eqref{eq:LASSOproj} without any assumption other than noise normality. The proofs and results are similar to those of \cite{lounici_oracle_2011,BT17} except that we have to deal with the remaining term due to the violation of the restricted eigenvalue assumption for the solution of~\eqref{eq:LASSOinf} and the bias due to the projection for the solution of~\eqref{eq:LASSOproj}. The results are true for both fixed and random designs. We find, as expected, that the properties of the projected estimator~\eqref{eq:LASSOproj} depend strongly on the choice of the projection dimension $m$. A data-driven criterion for selecting the dimension $m$, inspired by the work of~\cite{BBM99} and their adaptation to the functional linear model by~\cite{BMR16}, is proposed.
In Section~\ref{sec:ineg_oracle_prediction}, we obtain a sparsity oracle inequality for the theoretical prediction error under certain assumptions of sub-Gaussianity of the data distribution. 
The sections~\ref{sec:algo} and \ref{sec:simus} are devoted to the numerical properties of the solution. If the solution of the criterion~\eqref{eq:LASSOproj} can be computed directly from the coefficients of the data in a basis of the space $\mathbf  H^{(m)}$ with tools dedicated to multivariate data, it is not the same for the solution of the criterion~\eqref{eq:LASSOinf} which requires solving an infinite dimensional optimization problem. We then define a computational algorithm allowing to minimize the criterion~\eqref{eq:LASSOinf} directly in the space $\mathbf H$, without projecting the data. This computational algorithm is also used to solve the criterion~\eqref{eq:LASSOproj} to facilitate comparisons. The properties of the estimators are studied numerically in section~\ref{sec:simus} on simulated data sets. We then applied both estimation procedures to the prediction of energy consumption of household appliances. 

\subsection*{Notations}
Throughout the paper, we denote, for all ${J\subseteq\{1,...,p\}}$ the sets 
$$\mathbf H_J:=\prod_{j\in J}\mathbb H_j.$$
Consider that the data $\mathbf X_1,\hdots,\mathbf X_n$ has been centered, we also define
$$\widehat{\boldsymbol{\boldsymbol\Gamma}}:\boldsymbol\beta\in\mathbf H\mapsto \frac1n\sum_{i=1}^n\ps{\boldsymbol\beta}{\mathbf X_i}\mathbf X_i,$$
the empirical covariance operator associated to the data and its restricted versions 
$$\widehat{\boldsymbol{\boldsymbol\Gamma}}_{J,J'}:\boldsymbol\beta=(\beta_j, j\in J)\in\mathbf H_J\mapsto\left(\frac1n\sum_{i=1}^n\sum_{j\in J}\ps{\beta_j}{X_i^j}_jX_i^{j'}\right)_{j'\in J'}\in\mathbf H_{J'},$$
defined for all $J,J'\subseteq\{1,...,p\}$.  
For simplicity, we also denote $\widehat{\boldsymbol{\boldsymbol\Gamma}}_J:=\widehat{\boldsymbol{\boldsymbol\Gamma}}_{J,J}$, 
 $\widehat{\boldsymbol{\boldsymbol\Gamma}}_{J,j}:=\widehat{\boldsymbol{\boldsymbol\Gamma}}_{J,\{j\}}$ and $\widehat\Gamma_j:=\widehat{\boldsymbol\Gamma}_{\{j\},\{j\}}$.


For $\boldsymbol\beta=(\boldsymbol\beta_1,...,\boldsymbol\beta_p)\in\mathbf H$, we denote by $J(\boldsymbol\beta):=\{j,\ \beta_j\neq 0\}$ the support of $\boldsymbol\beta$ and $|J(\boldsymbol\beta)|$ its cardinality. 

We also denote by $\mathbb P_{\mathbf X}(\cdot)=\mathbb P(\cdot|\mathbf X_1,\hdots,\mathbf X_n)$ the conditional probability with respect to the design if it is random or $\mathbb P_{\mathbf X}(\cdot)=\mathbb P$ if the design is fixed. 

\section{Discussion on the restricted eigenvalues assumption}
\label{sec:RE}

\subsection{The restricted eigenvalues assumption does not hold if $\dim(\mathbf H)=+\infty$}
\label{sec:REinf}

Sparsity oracle inequalities are usually obtained under conditions on the design matrix. One of the most common is the restricted eigenvalues property  \citep{bickel_simultaneous_2009,lounici_oracle_2011}. Translated to our context, this assumption may be written as follows.

$(A_{RE(s)})$: There exists a positive number $\kappa=\kappa(s)$ such that
$$\min\left\{\frac{\n{\boldsymbol \delta}_n}{\sqrt{\sum_{j\in J}\n{\delta_j}_j^2}}, |J|\leq s, \boldsymbol\delta=(\delta_1,...,\delta_p)\in\mathbf H\backslash\{0\}, \sum_{j\notin J}\lambda_j\n{\delta_j}_j\leq c_0\sum_{j\in J}\lambda_j\n{\delta_j}_j\right\}\geq\kappa,$$
with $\n{f}_n:=\sqrt{\frac1n\sum_{i=1}^n \ps{f}{\mathbf X_i}^2}$ the empirical norm on $\mathbf H$ naturally associated with our problem. 

As explained in \citet[Section 3]{bickel_simultaneous_2009}, this assumption can be seen as a ''positive definiteness'' condition on the Gram matrix restricted to sparse vectors. In the finite dimensional context, \citet{VB09}
 have proven that this condition covers a large class of design matrices. 
 
 The next lemma, proven in Section~\ref{subsec:proofREfalse}, shows that this assumption does not hold when $\dim(\mathbf H_J)$ is too large for a subset $J$ of $\{1,\hdots,p\}$.
  
 \begin{lemma}\label{lem:REfalse}
 Suppose that there exists $J\subset\{1,\hdots,p\}$ such that $\dim(\mathbf H_J)>{\rm rk}(\widehat{\boldsymbol\Gamma}_J)$, then, for all $s\geq|J|$, for all $c_0>0$
 \begin{equation*}
 \small\min\left\{\frac{\n{\boldsymbol \delta}_n}{\sqrt{\sum_{j\in J}\n{\delta_j}_j^2}}, |J|\leq s, \boldsymbol\delta=(\delta_1,...,\delta_p)\in\mathbf H\backslash\{0\}, \sum_{j\notin J}\lambda_j\n{\delta_j}_j\leq c_0\sum_{j\in J}\lambda_j\n{\delta_j}_j\right\}=0. 
 \end{equation*}
 \end{lemma}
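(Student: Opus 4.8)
The plan is to exploit the fact that the empirical norm $\n\cdot_n$ depends only on the finitely many vectors $\mathbf X_1,\dots,\mathbf X_n$, and therefore ``sees'' only a finite-dimensional subspace of the infinite-dimensional space $\mathbf H$: any nonzero direction orthogonal to all the data lies in the constraint cone and makes the ratio vanish, so the minimum is attained and equals $0$.

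First I would reduce to a single infinite-dimensional factor. Since $p$ is finite and $\dim(\mathbf H)=\dim(\mathbb H_1)+\cdots+\dim(\mathbb H_p)=+\infty$, there is at least one index $j_0\in\{1,\dots,p\}$ with $\dim(\mathbb H_{j_0})=+\infty$. The vectors $X_1^{j_0},\dots,X_n^{j_0}$ span a subspace $V\subseteq\mathbb H_{j_0}$ of dimension at most $n$, hence $V^\perp\neq\{0\}$ because $\mathbb H_{j_0}$ is infinite-dimensional; pick any $\delta_{j_0}\in V^\perp$ with $\n{\delta_{j_0}}_{j_0}=1$.

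Next I would exhibit an admissible $\boldsymbol\delta$ realising the value $0$. Define $\boldsymbol\delta=(\delta_1,\dots,\delta_p)\in\mathbf H$ by $\delta_{j_0}$ as above and $\delta_j=0$ for $j\neq j_0$, and take $J=\{j_0\}$, so $|J|=1\leq s$. By construction $\ps{\boldsymbol\delta}{\mathbf X_i}=\ps{\delta_{j_0}}{X_i^{j_0}}_{j_0}=0$ for every $i=1,\dots,n$, hence $\n{\boldsymbol\delta}_n=0$, while $\boldsymbol\delta\neq 0$ and $\sqrt{\sum_{j\in J}\n{\delta_j}_j^2}=\n{\delta_{j_0}}_{j_0}=1>0$. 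The cone constraint $\sum_{j\notin J}\lambda_j\n{\delta_j}_j\leq c_0\sum_{j\in J}\lambda_j\n{\delta_j}_j$ reads $0\leq c_0\lambda_{j_0}$, which holds for any $c_0>0$. Thus $\boldsymbol\delta$ is admissible and the corresponding value of the ratio $\n{\boldsymbol\delta}_n/\sqrt{\sum_{j\in J}\n{\delta_j}_j^2}$ equals $0$; since this ratio is nonnegative for every admissible $\boldsymbol\delta$, the minimum is $0$.

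There is essentially no genuine obstacle here; the only point requiring a little care is the elementary linear-algebra fact that a finite family of vectors in an infinite-dimensional Hilbert space cannot span it, so that $V^\perp$ is nontrivial. For emphasis one may add the structural remark that the same argument shows the empirical covariance operator $\widehat{\boldsymbol\Gamma}$ has rank at most $n$, hence an infinite-dimensional kernel, which is the underlying reason why the restricted eigenvalue assumption $(A_{RE(s)})$ cannot hold in this infinite-dimensional setting.
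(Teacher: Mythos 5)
Your proof is correct, but it takes a slightly different route from the paper's. The paper fixes an orthonormal basis $(e^{(k)})_{k\geq 1}$ of the infinite-dimensional factor, tests the ratio along the sequence $\boldsymbol\delta^{(k)}=(e^{(k)},0,\dots,0)$ with $J=\{1\}$, and uses the fact that the Fourier coefficients $\ps{X_i^1}{e^{(k)}}_1$ tend to $0$ as $k\to\infty$ (because $\n{X_i^1}^2=\sum_k\ps{X_i^1}{e^{(k)}}^2<\infty$) to conclude that $\n{\boldsymbol\delta^{(k)}}_n\to 0$; the quantity in the lemma is therefore an infimum equal to $0$, not necessarily attained. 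You instead pick a single exact null direction: a unit vector $\delta_{j_0}$ orthogonal to the at-most-$n$-dimensional span of $X_1^{j_0},\dots,X_n^{j_0}$, which exists precisely because $\dim(\mathbb H_{j_0})=+\infty$, and for which $\n{\boldsymbol\delta}_n=0$ exactly. Your argument is more elementary (no limiting step) and yields the slightly stronger conclusion that the minimum is attained, at an explicit element of $\mathbf H^{(m)}$-free form lying in $\mathrm{Ker}(\widehat{\boldsymbol\Gamma})$; this is consistent with the paper's own later remark that $\widehat{\boldsymbol\Gamma}$ has rank at most $n$. The paper's sequential version has the minor advantage of not requiring one to identify the orthogonal complement of the data span, but both proofs rest on the same underlying degeneracy of the empirical norm. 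No gap in your argument.
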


Remark that, since ${\rm Im}(\widehat{\boldsymbol\Gamma}_J)={\rm span}\{(\mathbf X_i^j)_{j\in J},i=1,\hdots,n\}$, ${\rm rk}(\widehat{\boldsymbol\Gamma}_J)\leq n$. Then the condition $\dim(\mathbf H_J)>{\rm rk}(\widehat{\boldsymbol\Gamma}_J)$ is unfortunately always verified if $\dim(\mathbf H)=+\infty$.
\subsection{Finite-dimensional subspaces and restriction of the restricted eigenvalues assumption} 

 The infinite-dimensional nature of the data is the main obstacle here. To circumvent the dimensionality problem, we restrict the assumption to finite-dimensional spaces. 
In the sequel, we focus on spaces spanned by the $m$-first elements of an orthonormal basis $(\boldsymbol\varphi^{(k)})_{k\geq 1}$ i.e. $\mathbf H^{(m)}:={\rm span}\left\{\boldsymbol\varphi^{(1)},\hdots,\boldsymbol\varphi^{(m)}\right\}$. To obtain sparsity oracle inequalities, we suppose fulfilled the following condition of support compatibility of the basis. We denote by 
 $$
 \pi_j : \mathbf f=(f_1,\hdots,f_p)\in\mathbf H\mapsto (0,\hdots,0,f_j,0,\hdots,0)
 $$
 the projection operator into the $j$-th coordinates and 
 \[
 \Pi_m: \mathbf f\in\mathbf H\mapsto \sum_{j=1}^m\langle\mathbf f,\boldsymbol\varphi^{(k)}\rangle\boldsymbol\varphi^{(k)}
 \]
 the projection operator into $\mathbf H^{(m)}$.
 
 \bigskip
 
 
\noindent $(C_{supp})$
 
  For all $m\geq 1$, $j\in\{1,\hdots,p\}$, the operator $\Pi_m$ commutes with $\pi_j$. 
  
  \bigskip

Condition $(C_{supp})$ appears necessary to obtain the sparsity oracle inequality. It is a condition on the basis $(\boldsymbol\varphi^{(k)})_{k\geq 1}$. It is the case for instance if, for all $k\geq 1$, $|J(\boldsymbol\varphi^{(k)})|=1$. Indeed, this means that $\pi_j\boldsymbol\varphi^{(k)}=\mathbf 1_{\{j\in J(\boldsymbol\varphi^{(k)})\}}\boldsymbol\varphi^{(k)}$ for all $\mathbf f\in\mathbf H$ 
\[
\pi_j\Pi_m \mathbf f = \pi_j\sum_{k=1}^m\langle\mathbf f,\boldsymbol\varphi^{(k)}\rangle \boldsymbol\varphi^{(k)}=\sum_{k=1}^m\langle\mathbf f,\boldsymbol\varphi^{(k)}\rangle \pi_j\boldsymbol\varphi^{(k)}=\sum_{k=1}^m\mathbf 1_{\{j\in J(\boldsymbol\varphi^{(k)})\}}\langle f_j,\varphi_j^{(k)}\rangle_j \boldsymbol\varphi^{(k)}
\]
and we deduce that
\[
\Pi_m\pi_j\mathbf f = \sum_{k=1}^m\langle\pi_j\mathbf f,\boldsymbol\varphi^{(k)}\rangle\boldsymbol\varphi^{(k)}=\sum_{k=1}^m\langle f_j,\varphi^{(k)}_j\rangle_j\boldsymbol\varphi^{(k)}=\pi_j\Pi_m\mathbf f. 
\]

It is possible now to define a restricted eigenvalues property on the projection on the data on the finite-dimensional space $\mathbf H^{(m)}$. We would like to emphasize first that the viewpoint is different. In finite-dimensional contexts (see e.g. \citealt{bickel_simultaneous_2009,lounici_oracle_2011}), the restricted eigenvalue property is an assumption on the design matrix. In infinite-dimensional contexts, it seems more natural, since, there is no \emph{a priori} dimension for the data, to define a sequence $(\tilde\kappa_n^{(m)})_{m\geq 1}$ depending on the sparsity level $s\in\{1,\hdots,p\}$ as follows
 \begin{eqnarray}
 \tilde\kappa_n^{(m)}(s)&:=&\nonumber\\
 &&\hspace{-2cm}\small\min\left\{\frac{\n{\boldsymbol \delta}_n}{\sqrt{\sum_{j\in J}\n{\delta_j}_j^2}}, |J|\leq s, \boldsymbol\delta=(\delta_1,...,\delta_p)\in\mathbf H^{(m)}\backslash\{0\}, \sum_{j\notin J}\lambda_j\n{\delta_j}_j\leq 3\sum_{j\in J}\lambda_j\n{\delta_j}_j\right\}. \label{eq:tildekappa}
 \end{eqnarray}
The quantities $\tilde\kappa_n^{(m)}(s)$ are linked with the spectral radius of restrictions of the empirical covariance operator $\covhat$ by the following relationship
 \begin{equation}\label{eq:encadrement_tildekappa}
 \min_{J\subseteq\{1,\hdots,p\}; |J|\leq s} \rho\left(\widehat{\boldsymbol\Gamma}_{J|m}^{-1/2}\right)^{-1}\geq\tilde\kappa_n^{(m)}(s)\geq \rho\left(\widehat{\boldsymbol\Gamma}_{m}^{-1/2}\right)^{-1},
 \end{equation}
 where $\widehat{\boldsymbol\Gamma}_{J|m}=\left(\langle\widehat{\boldsymbol\Gamma}_J\boldsymbol\varphi^{(k)}_J,\boldsymbol\varphi^{(k')}_J\rangle_J \right)_{1\leq k,k'\leq m}$ where $\boldsymbol\varphi^{(k)}_J=(\varphi^{(k)}_j, j\in J)\in\mathbf H_J$ and $\langle\mathbf f,\mathbf g\rangle_J=\sum_{j\in J}\langle f_j,g_j\rangle_j$ is the usual scalar product of $\mathbf H_J$.
 
 Since it has been proven by \citet{CJ10b} that the rate of decrease of the eigenvalues of the covariance operator influences the minimax rates in functional linear regression, we may assume that the rate of decrease of $\tilde\kappa_n^{(m)}(s)$ to 0 influences the rate, which is confirmed by our results.
 
 \subsection{Behavior of the sequence $(\tilde\kappa_n^{(m)})_{m\geq 1}$ in some examples}
 \label{sec:examples}
 
In this section, we detail three examples of spaces $\mathbf H$ on which we will illustrate the theoretical results of the paper. The two first examples are illustrative ones and the third one is close to the electricity consumption case presented in Section~\ref{sec:applis_elec}.

\paragraph{Example 1: finite-dimensional space verifying the restricted eigenvalues assumption}

We first consider, as an illustrative example, the case where $\dim(\mathbf H)=d<+\infty$. In that case, without loss of generality, we can consider that $\dim(\mathbb H_j)=\mathbb R^{d_j}$ with $d_1+\hdots+d_p=d$. Moreover, we suppose in this example, that the restricted eigenvalues assumption $(A_{RE(s)})$ written in Section~\ref{sec:REinf} holds with $c_0=3$. This case match with the model described in \citet{BT17,lounici_oracle_2011} (with, eventually, $c_0=7$ instead of $c_0=3$ in \citealt{lounici_oracle_2011}) and we can see easily that, for any nested sequence $\mathbf H^{(1)}\subset\hdots\subset\mathbf H^{(d-1)}\subset\mathbf H^{(d)}=\mathbf H$ of $\mathbf H$, 
\[
\tilde\kappa_n^{(1)}(s)\geq\hdots\tilde\kappa_n^{(d-1)}(s)\geq \tilde\kappa_n^{(d)}(s)\geq\kappa>0.
\]

\paragraph{Example 2: simple semi-functional linear model}

We suppose that $p=2$ and $\mathbb H_1=\mathbb L^2([0,1])$ and $\mathbb H_2=\mathbb R$. We consider a basis $(\widehat e_k^{(1)})_{k\geq 1}$ that diagonalizes the empirical covariance operator $\widehat\Gamma_1$ of the functional data $(X_1^1,\hdots,X_n^1)$ and we denote by $(\widehat\mu^{(1)}_k)_{k\geq 1}$ the associated non-increasing eigenvalues sequence. Remark that the orthonormal system $\{(\widehat e_k^{(1)},0), k\geq 1; (0,1)\}$ is a basis of $\mathbf H$ that diagonalizes the operator $\widehat{\boldsymbol\Gamma}$.  We construct for a rank $r\in\mathbb N\backslash\{0\}$, 
\begin{align*}
\mathbf H^{(m)}&=\text{span}\{(\widehat e_k^{(1)},0), k=1,\hdots,m\} &\text{ for }m<r,\\
\mathbf H^{(m)}&=\text{span}\{(\widehat e_k^{(1)},0), k=1,\hdots,m-1; (0,1)\} &\text{ for }m\geq r.
\end{align*}
For $s=1$, we remark that, for $m<r$, $$\tilde\kappa_n^{(m)}(1)=\widehat\mu^{(1)}_m$$ the $m$-largest eigenvalue of $\widehat\Gamma_1$. For $m\geq r$, we also take into account the interaction between the two variables and we can see that $\tilde\kappa_n^{(m)}(1)$ is the smallest eigenvalue of the covariance matrix of the data matrix containing the coefficients of the projection of the data onto $\mathbf H^{(m)}$ which is $(\langle X_i^1,e_1^{(1)}\rangle_1,\hdots,\langle X_i^1,e_{m-1}^{(1)}\rangle_1,X_i^2)_{i=1,\hdots,n}$.

\paragraph{Example 3: fully multivariate functional linear model} 
Now we consider the example of $p$ an integer and $\mathbb H_j=\mathbb L^2([0,1])$. We define, for all $j=1,\hdots,p$, a basis $(\widehat e_k^{(j)})_{k\geq 1}$ that diagonalizes the empirical covariance operator $\widehat\Gamma_j$ of the data $(X_1^j,\hdots,X_n^j)$ and we denote by $(\widehat\mu^{(j)}_k)_{k\geq 1}$ the associated eigenvalues sequence. To simplify the definitions, we set $m=Lp$, with $L\in\mathbb N\backslash\{0\}$ and writes
\[
\mathbf H^{(m)}=S_L^{(1)}\times\hdots\times S_L^{(p)} \text{ with } S_L^{(j)}=\text{span}\{\widehat e_k^{(j)}, k=1,\hdots,L\}, j=1,\hdots,p.
\]
In that case, the matrix $\widehat{\boldsymbol\Gamma}_{|m}$ is a block matrix 
$$
\widehat{\boldsymbol\Gamma}_{|m}=\begin{pmatrix}
\widehat\Gamma_{1,1}^L&
\widehat\Gamma_{1,2}^{L} &\hdots & \widehat\Gamma_{1,p}^{L}\\

\widehat\Gamma_{1,2}^{L} & 
\widehat\Gamma_{2,2}^L
&
\hdots & \widehat\Gamma_{2,p}^{L}\\
\vdots   && \ddots\\
\widehat\Gamma_{1,p}^{L} &\hdots&  \widehat\Gamma_{p-1,p}^{L} & 
\widehat\Gamma_{p,p}^L
\end{pmatrix} 
$$
where $\widehat\Gamma_{j,j'}^{L}=\left(\frac1n\sum_{i=1}^n\langle X_i^j,\widehat e_k^{(j)}\rangle \langle X_i^{j'},\widehat e_{j'}^{(k')}\rangle\right)_{k,k'=1,\hdots,L}$ is the correlation matrix between the projections of $X^j$ into $S_j^{(L)}$ and $X^{j'}$ into $S_{j'}^{(L)}$. We remark that the matrices $\widehat\Gamma_{j,j}^L$ are diagonal matrices, with diagonal coefficients $\{\widehat\mu_k^{(j)}\}_{k=1,\hdots,L}$. Equation~\eqref{eq:encadrement_tildekappa} can be rewritten in that case
\[
\min_{j=1,\hdots,p}\widehat\mu_j^{(L)}\geq \tilde\kappa_n^{(m)}(1)\geq \tilde\kappa_n^{(m)}(2)\geq\hdots\geq\tilde\kappa_n^{(m)}(p)=\rho\left(\widehat{\boldsymbol\Gamma}_{m}^{-1/2}\right)^{-1},
\]
with equality in the case where the extra-diagonal correlation matrices $\widehat\Gamma_{j,j'}^{L}=0$ for all $j\neq j'$.  
\section{Sharp sparsity oracle-inequalities for the empirical prediction error}
\label{sec:ineg_oracle_empirical}

In this section, the design $\mathbf X_1,\hdots,\mathbf X_n$ is supposed to be either fixed or random. The results of the section are obtained under the unique assumption of Gaussianity of the noise and no assumption on the design. We prove the following sharp sparsity-oracle inequality for the solutions of both problems \eqref{eq:LASSOinf} and \eqref{eq:LASSOproj}. 
\begin{proposition} \label{prop:oracle}

Let $q>0$ be fixed and choose
\begin{equation}\label{eq:deflambda}
\lambda_j=r _n\left(\frac1n\sum_{i=1}^n\|X_i^j\|_j^2\right)^{1/2}\text{ with } r_n= A\sigma\sqrt\frac{q\ln(p)}{n} \quad(A\geq 4\sqrt 2).
\end{equation}
With probability larger than $1-p^{1-q}$, for all $m\geq 1$,
\begin{equation}\label{eq:oracle_proj}
\left\|\widehat{\boldsymbol\beta}_{\boldsymbol\lambda,m}-\boldsymbol\beta^*\right\|_n^2\leq \min_{\boldsymbol\beta\in\mathbf H^{(m)}, |J(\boldsymbol\beta)|\leq s}\left\{\left\|\boldsymbol\beta-\boldsymbol\beta^*\right\|_n^2+\frac{9}{4(\tilde\kappa_n^{(m)})^2}\sum_{j\in J(\boldsymbol\beta)}\lambda_j^2\right\}
\end{equation}
and 
\begin{equation}\label{eq:oracle}
\n{\widehat{\boldsymbol\beta}_{\boldsymbol\lambda,\infty}-\boldsymbol\beta^*}_{n}^2\leq \min_{m\geq 1}\min_{\boldsymbol\beta\in\mathbf H^{(m)}, |J(\boldsymbol\beta)|\leq s}\left\{\n{\boldsymbol\beta-\boldsymbol\beta^*}_{n}^2+\frac{9}{4(\tilde\kappa_n^{(m)})^2}\sum_{j\in J(\boldsymbol\beta)}\lambda_j^2+R_{n,m}\right\},\end{equation}
with
\[
R_{n,m}:= \sqrt{\sum_{j\in J(\boldsymbol\beta)}\lambda_j^2}\left(\n{\estim^{(\perp m)}}+\frac3{\kappa_n^{(m)}}\n{\estim^{(\perp m)}}_n\right),
\]
where $\widehat{\boldsymbol\beta}^{(\perp m)}=\widehat{\boldsymbol\beta}-\widehat{\boldsymbol\beta}^{(m)}$ the orthogonal projection onto $(\mathbf H^{(m)})^{\perp}$ and using the convention $1/0=+\infty$ in the case where $\tilde\kappa_n^{(m)}=0$.

 \end{proposition}

The proof of this result can be found in Section~\ref{subsec:proof:prop:oracle}. It is based on the ones of \citet[Proposition 5]{BT17} and \citet[Theorem 3.1]{lounici_oracle_2011} with some adjustments linked with the infinite-dimensional nature of the data. In particular, we need a concentration inequality that remains true in Hilbert spaces (see Proposition~\ref{prop:LT91}).

In the case where $\dim(\mathbf H)<+\infty$ (Example 1 in Section~\ref{sec:examples}) , we remark that when $m=d=\dim(\mathbf H)$ the problematic remaining term $R_{m,n}$ disappears and the result of Proposition~\ref{prop:oracle} coïncides with 
\begin{itemize}
\item the result of \citet[Proposition 5]{BT17} in the case $\lambda_j=\lambda$ for all $j=1,\hdots,p$ with the same constants,
\item the result of \citet[Theorem 3.1]{lounici_oracle_2011} with better constants ($9/4$ instead of $96$ in the term due to the penalty and $1$ replaced by $2$ in the bias term).
\end{itemize}

However, in the case where $\dim(\mathbf H)=+\infty$ we have to deal either with the remaining term $R_{n,m}$ for $\widehat{\boldsymbol\beta}_{\boldsymbol\lambda,\infty}$ or with the choice of an optimal dimension $m$ for $\widehat{\boldsymbol\beta}_{\boldsymbol\lambda,m}$. Up to now, it seems difficult to know the exact convergence rate of $R_{n,m}$.  On the contrary, the choice of dimension $m$ for the estimator $\widehat{\boldsymbol\beta}_{\boldsymbol\lambda,m}$ is linked with a classical bias-variance compromise. 
\begin{itemize}
\item When $m$ is small the distance $\|\boldsymbol\beta^*-\boldsymbol\beta\|_n$ between $\boldsymbol\beta^*$ and any $\boldsymbol\beta\in\mathbf H^{(m)}$ is generally large.
\item When $m$ is sufficiently large, we know the distance $\|\boldsymbol\beta^*-\boldsymbol\beta\|_n$ is small but the term $\frac{3}{(\kappa_n^{(m)})^2}\sum_{j\in J(\boldsymbol\beta)}\lambda_j^2$ may be very large since $\kappa_n^{(m)}$ is close to 0 when $m$ is close to ${\rm rk}(\widehat{\boldsymbol\Gamma})$.
\end{itemize} 

To achieve the best trade-off between these two terms, a model selection procedure, in the spirit of~\citet{BBM99}, is introduced. We select
\begin{equation}\label{eq:dim_selec}
\widehat m\in{\arg\min}_{m=1,...,N_n}\left\{\frac1n\sum_{i=1}^n\left(Y_i-\langle\widehat{\boldsymbol\beta}_{\boldsymbol\lambda,m},\mathbf X_i\rangle\right)^2+\kappa\sigma^2\frac{m\log(n)}{n}\right\},
\end{equation}
where $\kappa>0$ is a constant which can be calibrated by a simulation study or selected from the data by methods stemmed from slope heuristics (see e.g. \citealt{BMM12}) and $N_n\leq n$. 

We obtain the following sparsity oracle inequality for the selected estimator $\widehat{\boldsymbol\beta}_{\boldsymbol\lambda,\widehat m}$.%
\begin{theorem}\label{thm:oracle} Let $q>0$ and $\boldsymbol\lambda=(\lambda_1,\hdots,\lambda_p)$ chosen as in Equation~\eqref{eq:deflambda}. There exist a minimal value $\kappa_{\min}$ and a universal constant $C_{MS}>0$ such that, with probability larger than $1-p^{1-q}-C_{MS}/n$, if $\kappa>\kappa_{\min}$, for all $\tilde\eta>0$,
\begin{multline*}
\left\|\widehat{\boldsymbol\beta}_{\boldsymbol\lambda,\widehat m}-\boldsymbol\beta^*\right\|_n^2\leq  (1+\tilde\eta)\min_{m=1,\hdots,N_n}\min_{\boldsymbol\beta\in\mathbf H^{(m)},|J(\boldsymbol\beta)|\leq s}\left\{\left\|\boldsymbol\beta-\boldsymbol\beta^*\right\|_n^2+\frac{9}{4(\tilde\kappa_n^{(m)}(s))^2}\sum_{j\in J(\boldsymbol\beta)}\lambda_j^2\right.\\\left.+C(\tilde\eta)\kappa\log(n)\sigma^2\frac{m}{n}\right\},
\end{multline*}
with $C(\tilde\eta)=(\tilde\eta+2)/(\tilde\eta+1)$, $\kappa$ is the penalty constant appearing in Eq.~\eqref{eq:dim_selec} and $\tilde\kappa_n^{(m)}$ is the restricted eigenvalue quantity of Eq.~\eqref{eq:tildekappa}.
\end{theorem}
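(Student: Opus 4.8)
The plan is a model-selection argument in the spirit of~\citet{BBM99}, fed by the model-wise oracle bound~\eqref{eq:oracle_proj} of Proposition~\ref{prop:oracle}. Write $\gamma_n(\mathbf t):=\frac1n\sum_{i=1}^n(Y_i-\ps{\mathbf t}{\mathbf X_i})^2$ for the empirical contrast and $\mathrm{pen}(m):=\kappa\sigma^2 m\log(n)/n$, so that the selection rule~\eqref{eq:dim_selec} reads $\gamma_n(\widehat{\boldsymbol\beta}_{\boldsymbol\lambda,\widehat m})+\mathrm{pen}(\widehat m)\le\gamma_n(\widehat{\boldsymbol\beta}_{\boldsymbol\lambda,m})+\mathrm{pen}(m)$ for every $m\le\min\{N_n,M_n\}$. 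Substituting $Y_i=\ps{\boldsymbol\beta^*}{\mathbf X_i}+\varepsilon_i$ and expanding the squares gives the identity $\gamma_n(\mathbf t_1)-\gamma_n(\mathbf t_2)=\n{\mathbf t_1-\boldsymbol\beta^*}_n^2-\n{\mathbf t_2-\boldsymbol\beta^*}_n^2+\frac2n\sum_{i=1}^n\varepsilon_i\ps{\mathbf t_2-\mathbf t_1}{\mathbf X_i}$, which turns the previous inequality into
\[
\n{\widehat{\boldsymbol\beta}_{\boldsymbol\lambda,\widehat m}-\boldsymbol\beta^*}_n^2+\mathrm{pen}(\widehat m)\le\n{\widehat{\boldsymbol\beta}_{\boldsymbol\lambda,m}-\boldsymbol\beta^*}_n^2+\mathrm{pen}(m)+\frac2n\sum_{i=1}^n\varepsilon_i\ps{\widehat{\boldsymbol\beta}_{\boldsymbol\lambda,\widehat m}-\widehat{\boldsymbol\beta}_{\boldsymbol\lambda,m}}{\mathbf X_i}.
\]

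The heart of the matter is the control of the random cross term. Let $\mathbf u(\mathbf t):=(\ps{\mathbf t}{\mathbf X_i})_{1\le i\le n}\in\R^n$, so that $\n{\mathbf t}_n^2=n^{-1}|\mathbf u(\mathbf t)|^2$ (Euclidean norm on $\R^n$), and let $V_k:=\mathbf u(\mathbf H^{(k)})$, a subspace of $\R^n$ with $\dim V_k\le k$. Because the $\mathbf H^{(k)}$ are nested, $\widehat{\boldsymbol\beta}_{\boldsymbol\lambda,\widehat m}-\widehat{\boldsymbol\beta}_{\boldsymbol\lambda,m}\in\mathbf H^{(m\vee\widehat m)}$, hence $\mathbf u(\widehat{\boldsymbol\beta}_{\boldsymbol\lambda,\widehat m}-\widehat{\boldsymbol\beta}_{\boldsymbol\lambda,m})\in V_{m\vee\widehat m}$. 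Writing $\boldsymbol\varepsilon=(\varepsilon_1,\dots,\varepsilon_n)$ and $P_{V}$ for the orthogonal projector onto $V\subseteq\R^n$, Cauchy--Schwarz, the triangle inequality $\n{\widehat{\boldsymbol\beta}_{\boldsymbol\lambda,\widehat m}-\widehat{\boldsymbol\beta}_{\boldsymbol\lambda,m}}_n\le\n{\widehat{\boldsymbol\beta}_{\boldsymbol\lambda,\widehat m}-\boldsymbol\beta^*}_n+\n{\widehat{\boldsymbol\beta}_{\boldsymbol\lambda,m}-\boldsymbol\beta^*}_n$ and the elementary bound $2ab\le\theta a^2+\theta^{-1}b^2$ (for a fixed $\theta\in(0,1)$) give
\[
\frac2n\sum_{i=1}^n\varepsilon_i\ps{\widehat{\boldsymbol\beta}_{\boldsymbol\lambda,\widehat m}-\widehat{\boldsymbol\beta}_{\boldsymbol\lambda,m}}{\mathbf X_i}\le\theta\n{\widehat{\boldsymbol\beta}_{\boldsymbol\lambda,\widehat m}-\boldsymbol\beta^*}_n^2+\theta\n{\widehat{\boldsymbol\beta}_{\boldsymbol\lambda,m}-\boldsymbol\beta^*}_n^2+\frac2{\theta n}\big(|P_{V_m}\boldsymbol\varepsilon|^2+|P_{V_{\widehat m}}\boldsymbol\varepsilon|^2\big),
\]
where I used $|P_{V_{m\vee\widehat m}}\boldsymbol\varepsilon|^2\le|P_{V_m}\boldsymbol\varepsilon|^2+|P_{V_{\widehat m}}\boldsymbol\varepsilon|^2$, which is trivial since $m\vee\widehat m$ equals $m$ or $\widehat m$.

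Conditionally on $\mathbf X$, each $V_k$ is deterministic and, by independence of $\boldsymbol\varepsilon$ and $\mathbf X$, $\boldsymbol\varepsilon$ is still $\mathcal N(0,\sigma^2 I_n)$, so $\sigma^{-2}|P_{V_k}\boldsymbol\varepsilon|^2$ follows a $\chi^2$ distribution with at most $k$ degrees of freedom. The Laurent--Massart deviation inequality with $x_k=(k+1)\log n$, followed by a union bound over $k=1,\dots,n$ (note $\widehat m\le\min\{N_n,M_n\}\le M_n\le n$), produces an event $\Omega_n$ of probability at least $1-C_{MS}/n$ on which $n^{-1}|P_{V_k}\boldsymbol\varepsilon|^2\le C_0\sigma^2 k\log(n)/n$ for all $k\le n$. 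On $\Omega_n$, $\frac2{\theta n}|P_{V_{\widehat m}}\boldsymbol\varepsilon|^2\le\frac{2C_0}{\theta\kappa}\mathrm{pen}(\widehat m)$ and the same with $\widehat m$ replaced by $m$; hence, setting $\kappa_{\min}:=2C_0/\theta$ and assuming $\kappa>\kappa_{\min}$, the $\widehat m$-term is absorbed by $\mathrm{pen}(\widehat m)$ on the left-hand side and the $m$-term is at most $\mathrm{pen}(m)$. Rearranging, for every $m\le\min\{N_n,M_n\}$,
\[
\n{\widehat{\boldsymbol\beta}_{\boldsymbol\lambda,\widehat m}-\boldsymbol\beta^*}_n^2\le\frac{1+\theta}{1-\theta}\n{\widehat{\boldsymbol\beta}_{\boldsymbol\lambda,m}-\boldsymbol\beta^*}_n^2+\frac2{1-\theta}\,\mathrm{pen}(m),
\]
with the term $\mathrm{pen}(\widehat m)$ having cancelled.

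It remains to intersect with the event $\mathcal A$ of Proposition~\ref{prop:oracle}, which has probability at least $1-p^{1-q}$ under the choice~\eqref{eq:deflambda}: on $\mathcal A$ the bound~\eqref{eq:oracle_proj} (applied with $\widetilde\eta=1$, say) controls $\n{\widehat{\boldsymbol\beta}_{\boldsymbol\lambda,m}-\boldsymbol\beta^*}_n^2$, for each $m\le M_n$, by $(1+\widetilde\eta)\min_{\boldsymbol\beta\in\mathbf H^{(m)}}\{2\n{\boldsymbol\beta-\boldsymbol\beta^*}_n^2+C(\widetilde\eta)(\kappa_n^{(m)})^{-2}\sum_{j\in J(\boldsymbol\beta)}\lambda_j^2\}$; substituting, then taking the minimum over $m\in\{1,\dots,\min\{N_n,M_n\}\}$ and over $\boldsymbol\beta\in\mathbf H^{(m)}$, yields the announced inequality, with $C,C',C''$ expressed through $\theta,\widetilde\eta,C(\widetilde\eta)$ (e.g. $C''=2/(1-\theta)$), and with probability at least $1-p^{1-q}-C_{MS}/n$. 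The step I expect to be the main obstacle is the uniform $\chi^2$ control of the $|P_{V_k}\boldsymbol\varepsilon|^2$: one must accommodate that the projection subspaces $V_k\subseteq\R^n$ are themselves random in the random-design case (handled by conditioning on $\mathbf X$), keep the range of the union bound finite, and track constants precisely enough that $\kappa_{\min}$ is well defined and the $\mathrm{pen}(\widehat m)$ term genuinely cancels.
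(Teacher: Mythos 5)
Your proof is correct, and it reaches the stated inequality by the same underlying mechanism as the paper --- a Birg\'e--Massart style comparison of penalized contrasts combined with a $\chi^2$ control of the Gaussian noise projected onto the (conditionally deterministic) subspaces $V_k=\{(\langle\mathbf f,\mathbf X_i\rangle)_{i\leq n},\ \mathbf f\in\mathbf H^{(k)}\}$; this is exactly the content of the paper's Lemma on $\sup_{\|f\|_n=1}\nu_n^2(f)=\frac1n\varepsilon^tP_k\varepsilon$, proved there via a Hanson--Wright type bound rather than Laurent--Massart, an immaterial difference. Where you genuinely diverge is in the decomposition: the paper compares $\widehat{\boldsymbol\beta}_{\boldsymbol\lambda,\widehat m}$ to an \emph{arbitrary} $\boldsymbol\beta\in\mathbf H^{(m)}$, so its cross term is $\frac2n\sum_i\varepsilon_i\langle\widehat{\boldsymbol\beta}_{\boldsymbol\lambda,\widehat m}-\boldsymbol\beta,\mathbf X_i\rangle$, which must be split into a piece handled on the event $\mathcal A$ by the Lasso dual-norm argument and a piece handled by the $\chi^2$ event; this forces the sparsity term $\sum_{j\in J(\boldsymbol\beta)}\lambda_j^2/(\kappa_n^{(m)})^2$ to be re-derived inside the model-selection proof before Proposition~\ref{prop:oracle} is finally invoked to bound $\|\widehat{\boldsymbol\beta}_{\boldsymbol\lambda,m}-\boldsymbol\beta\|_n^2$. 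You instead compare $\widehat{\boldsymbol\beta}_{\boldsymbol\lambda,\widehat m}$ directly to $\widehat{\boldsymbol\beta}_{\boldsymbol\lambda,m}$, so the only random term is supported on $\mathbf H^{(m\vee\widehat m)}$ and needs only the $\chi^2$ event; the Lasso machinery and the event $\mathcal A$ enter solely through the black-box use of inequality~\eqref{eq:oracle_proj} at the very end. This is a cleaner factorization of the argument and avoids duplicating the proof of Proposition~\ref{prop:oracle}; the only price is slightly looser constants (and note the factor $2$ from $(a+b)^2\le 2a^2+2b^2$ that your display silently absorbs into $\theta$, which affects nothing since $C,C',C''$ are unspecified). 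Both routes yield the same probability budget $1-p^{1-q}-C_{MS}/n$ and the same restriction $\kappa>\kappa_{\min}$.
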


The proof of Theorem~\ref{thm:oracle} can be found in Section~\ref{subsec:proof:thm:oracle}. It is based on the control of an empirical process naturally associated with our problem given in Lemma~\ref{lem:control_nun_empir}. Both quantities $\kappa_{\min}$ and $C_{MS}$ are universal constants. 
 
Theorem~\ref{prop:oracle} implies that, with probability larger than $1-p^{1-q}-C_{MS}/n$, if $|J(\boldsymbol\beta^*)|\leq s$,
\begin{equation}\label{eq:oracle_sparse}
\begin{split}
\n{\widehat{\boldsymbol\beta}_{\boldsymbol\lambda,\widehat m}-\boldsymbol\beta^*}_{n}^2&\leq (1+\tilde\eta)\min_{m=1,\hdots,\min\{N_n,M_n\}}\left\{\n{\boldsymbol\beta^{(*,\perp m)}}_n^2+\frac{9}{4(\tilde\kappa_n^{(m)}(s))^2}\sum_{j\in J(\boldsymbol\beta^*)}\lambda_j^2\right.\\
&\qquad\left.+C(\tilde\eta)\kappa\log(n)\frac{m}{n}\right\},
\end{split}
\end{equation}
where, for all $m$, $\boldsymbol\beta^{(*,\perp m)}$ is the orthogonal projection of $\boldsymbol\beta^*$ onto $(\mathbf H^{(m)})^\perp$. The upper-bound in Equation~\eqref{eq:oracle_sparse} is then the best compromise between two terms:
\begin{itemize}
\item an approximation term $\n{\boldsymbol\beta^{(*,\perp m)}}_n^2$ which decreases to 0 when $m\to+\infty$;
\item a second term due to the penalization and the projection which increases to $+\infty$ when $m\to+\infty$. 
\end{itemize}

\section{Oracle-inequality for prediction error}
\label{sec:ineg_oracle_prediction}

The aim of this section is to prove sparsity-oracle inequalities for the theoretical counterpart of the empirical prediction error and to derive convergence rates under appropriate regularity assumptions. 

We suppose in this section that the design $\mathbf X_1,\hdots,\mathbf X_n$ is a sequence of i.i.d centered random variables in $\mathbf H$. The aim is to control the estimator in terms of the norm associated to the prediction error of an estimator $\widehat{\boldsymbol\beta}$ defined by
\[
\|\boldsymbol\beta^*-\widehat{\boldsymbol\beta}\|_{\boldsymbol\Gamma}^2=\mathbb E\left[\left(\mathbb E[Y|\mathbf X]-\langle\widehat{\boldsymbol\beta},\mathbf X\rangle\right)^2|(\mathbf X_1,Y_1),\hdots,(\mathbf X_n,Y_n)\right]=\langle\boldsymbol\Gamma(\boldsymbol\beta^*-\widehat{\boldsymbol\beta}),\boldsymbol\beta^*-\widehat{\boldsymbol\beta}\rangle.
\]
where $(\mathbf X,Y)$ follows the same distribution as $(\mathbf X_1,Y_1)$ and is independent of the sample. 


\subsection{Moment assumptions and definitions}

First denote by $\boldsymbol\Gamma:\mathbf f \in\mathbf H\mapsto \mathbb E\left[\langle \mathbf f,\mathbf X_1\rangle\mathbf X_1\right]$ the theoretical covariance operator and define a theoretical version of $\tilde\kappa_n^{(m)}$, 
 \begin{eqnarray*}
 \kappa^{(m)}(s)&:=&\\
 &&\hspace{-2cm}\small\min\left\{\frac{\n{\boldsymbol \delta}_{\boldsymbol\Gamma}}{\sqrt{\sum_{j\in J}\n{\delta_j}_j^2}}, |J|\leq s, \boldsymbol\delta=(\delta_1,...,\delta_p)\in\mathbf H^{(m)}\backslash\{0\}, \sum_{j\notin J}\lambda_j\n{\delta_j}_j\leq 3\sum_{j\in J}\lambda_j\n{\delta_j}_j\right\}. 
 \end{eqnarray*}
we also denote by $(\mu_k)_{k\geq 1}$ the eigenvalues of $\boldsymbol\Gamma$ sorted in decreasing order.

\bigskip


$(H_{Mom}^{(1)})$ There exists a constant $b>0$ such that, for all $\ell\geq 1$, 
\[
\sup_{j\geq 1}\mathbb E\left[\frac{\langle \mathbf X,\varphi^{(j)}\rangle^{2\ell}}{\tilde v_j^{\ell}}\right]\leq \ell!b^{\ell-1}\text{ where }\tilde v_j:={\rm Var}(\langle\mathbf X_i,\boldsymbol\varphi^{(j)}\rangle).
\]

($H_{Mom}^{(2)}$) There exist two constants $v_{Mom}>0$ and $c_{Mom}>0$, such that, for all $\ell\geq 2$, 
\[
\mathbb E\left[\|\mathbf X\|^{2\ell}\right]\leq \frac{\ell !}2 v_{Mom}^2 c_{Mom}^{\ell-2}. 
\]

Both assumptions $(H_{Mom}^{(1)})$ and $(H_{Mom}^{(2)})$ are necessary to apply exponential inequalities and are verified e.g. by Gaussian or bounded processes. 

\subsection{Sparsity oracle inequality}

\begin{theorem}\label{thm:oracle_pred}
Suppose  that both $(H_{Mom}^{(1)})$ and $(H_{Mom}^{(2)}$) are verified. Suppose also that $q>0$ and $\boldsymbol \lambda=(\lambda_1,\hdots,\lambda_p)$ verify the conditions of Equation~\eqref{eq:deflambda}. 
  

Then, there exist $C_{MS},c_{\max}>0$ (depending only on ${\rm tr}(\boldsymbol\Gamma)$ and $\rho(\boldsymbol\Gamma)$) and $C_{Mom}>0$ (depending only on $v_{Mom}$ and $c_{Mom}$) such that the following inequality holds with probability larger than $1-p^{1-q}-(C_{MS}+C_{Mom})/n+2n^2\exp(-c_{\max}n)$, 
  \begin{eqnarray*}
\left\|\widehat{\boldsymbol\beta}_{\boldsymbol\lambda,\widehat m}-\boldsymbol\beta^*\right\|_{\Gamma}^2&\leq &C'\min_{m=1,\hdots,N_n}\min_{\boldsymbol\beta\in\mathbf H^{(m)}}\left\{\|\boldsymbol\beta-\boldsymbol\beta^*\|_{\boldsymbol\Gamma}^2+\frac1{\left(\kappa_n^{(m)}(s)\right)^2}\left(\sum_{j\in J(\boldsymbol\beta)}\lambda_j^2+\frac{\log^2(n)}{n}\right)\right.\\
&&\left.+\kappa\frac{\log n}n\sigma^2m+\left\|\boldsymbol\beta^*-\boldsymbol\beta^{(*,m)}\right\|_{\boldsymbol\Gamma}^2+\left(\kappa_n^{(m)}(s)\right)^2\left\|\boldsymbol\beta^*-\boldsymbol\beta^{(*,m)}\right\|^2\right\}.
  \end{eqnarray*}
where $C>0$ is a universal constant.
\end{theorem}

The proof is based on concentration inequalities of ratio of norms that can be found in Proposition~\ref{prop:norm_equiv} and that relies mainly on Bernstein's inequality (for real and functional random variables). It can be found in Section~\ref{proof:thm:oracle_pred}
\subsection{Convergence rates}

From Theorem~\ref{thm:oracle_pred}, we derive an upper-bound on the convergence rates of the estimator $\widehat{\boldsymbol\beta}_{\boldsymbol\lambda,\widehat m}$. For this we need some regularity assumptions on $\boldsymbol\beta^*$ and $\boldsymbol\Gamma$. 

For a sequence $v=(v_j)_{j\geq 1}$ of positive real numbers, we define a weighted norm as follows
\[
\|\mathbf f\|_v^2:=\sum_{k\geq 1}v_k\langle\mathbf f,\boldsymbol\varphi^{(k)}\rangle^2, \qquad \mathbf f\in\mathbf H.
\]

We introduce two sequences $\mathbf{\frak  b}=(\frak b_k)_{k\geq 1}$ and $v=(v_k)_{k\geq 1}$ of positive real numbers and $R,c>0$ and note 
\[
\mathcal E_{\frak b}(R):=\left\{\boldsymbol\beta\in\mathbf H, \|\boldsymbol\beta\|_{\frak b}\leq R\right\},
\]
and 
\begin{equation*}
\mathcal N_v(c):=\{T\in\mathcal L(\mathbf H), \|\boldsymbol{\Gamma}^{1/2}\mathbf f\|\leq c\|\mathbf f\|_v, \text{ for all }f\in\mathbf H\},
\end{equation*}
for the regularity classes of $\boldsymbol\beta^*$ and $\boldsymbol\Gamma$.

Let us explain the regularity assumptions on $\bbeta$ and $\boldsymbol{\Gamma}$ in the three examples of section~\ref{sec:examples}. In order to simplify the presentation, we replace in examples 2 and 3, for all $j$, the basis $(\widehat e_k^{(j)})_{k\geq 1}$ by its theoretical counterpart $(e_k^{(j)})_{k\geq 1}$ which is the basis that diagonalizes $\Gamma_j$ and write it example 2' (resp. example 3') instead of example 2 (resp. example 3). 

In example 1, since $\dim(\mathbf H)<+\infty$, we can remark that, for any sequence $\mathfrak b\in(\mathbb R_+^*)^{\mathbb N\backslash\{0\}}$,
\[
\mathbf H=\bigcup_{R>0}\mathcal E_{\mathfrak b}(R).
\]
Similarly, it is easily seen that for all sequence $v\in(\mathbb R_+^*)^{\mathbb N\backslash\{0\}}$, there exists $c=\rho(\boldsymbol\Gamma^{1/2})/\min_{j}\{v_j^{1/2}\}>0$ such that $\boldsymbol\Gamma\in\mathcal N_v(c)$. 

In example 2', remark that, for all $\mathbf f=(f_1,f_2)\in\mathbf H=\mathbb L^2([0,1])\times\mathbb R$, 
\[
\|\mathbf f\|_v^2=v_rf_2^2+\sum_{k\neq r}v_k\langle f_1,\widehat e_k^{(1)}\rangle^2. 
\]
Then, for all $\boldsymbol\beta=(\beta_1,\beta_2)\in\mathcal E_b(R)$, there exists $R'>0$ such that $\sum_{k\neq r}v_k\langle \boldsymbol\beta,\widehat e_k^{(1)}\rangle^2\leq R'$ (and conversely). The assumption is therefore an ellipsoidal regularity assumption on the functional element of the vector $\boldsymbol\beta$. This ellispoïdal regularity assumption is very classical in non-parametric minimax estimation \citep{T09} and in particular in a functional data framework \citep{CJ10b,CJ12,BMR16}. Concerning the hypothesis on $\boldsymbol\Gamma$, we have the following characterization : there exists $c>0$ such that $\boldsymbol\Gamma\in\mathcal N_v(c)$ if and only if $\mu_k^{(1)}\lesssim  v_k$ where $(\mu_k^{(1)})_{k\geq 1}$ is the sequence of eigenvalues of the covariance operator $\Gamma_1$, sorted in non increasing order. 

Concerning the more complex example 3', there is a link between the regularity of beta $\boldsymbol\beta$ and the regularity of its coordinates. Remark that, defining $\boldsymbol\varphi^{(jp+k)}=(e_k^{(j)}\mathbf 1_{\ell=j})_{\ell=1,\hdots,p}$ we have, for all $\boldsymbol\beta=(\beta_1,\hdots,\beta_p)\in\mathbf H$, 
\[
\|\boldsymbol\beta\|_{\mathfrak b}^2=\sum_{j=1}^p\sum_{k\geq 1}\mathfrak b_{pj+k}\langle \boldsymbol\beta,\boldsymbol\varphi^{(pj+k)}\rangle^2=\sum_{j=1}^p\sum_{k\geq 1}\mathfrak b_{pj+k}\langle \beta_j,e_k^{(j)}\rangle_j^2. 
\]
Then, if each coordinate $\beta_j$ is in an ellipsoïd of $\mathbb L^2([0,1])$ i.e. there exist $b_1,\hdots,b_p>0$ and $R>0$ such that, 
\[
\sum_{k\geq 1}k^{b_j}\langle\beta_j,e_k^{(j)}\rangle_j^2\leq R,
\]
then, denoting, $\mathfrak b=(k^{\max_{j=1,\hdots,p}\{b_j\}})_{k\geq 1}$ we have 
\[
\boldsymbol\beta\in\mathcal E_{\mathfrak b}(pR).
\]
Then, the index $b_j$ accounting for the regularity of the function $\beta_j$, the vector of functions $\boldsymbol\beta$ has the worst regularity of all its coordinates. Regarding the regularity class $\mathcal N_v(c)$ a similar result may be obtained. We can see that, for all $\mathbf f=(f_1,\hdots,f_p)\in\mathbf H$,
 \[
 \|\boldsymbol\Gamma^{1/2}\mathbf f\|^2={\rm Var}(\langle\mathbf X,\mathbf f\rangle)={\rm Var}\left(\sum_{j=1}^p\langle X^j,f_j\rangle_j\right)\leq\sum_{j=1}^p{\rm Var}\left(\langle X^j,f_j\rangle_j\right)=\sum_{j=1}^p\|\Gamma_j^{1/2}f_j\|_j^2,
 \]
 and finally, if there exists $c>0$ such that $\mu_k^{(j)}\leq c v_{jp+k}$
 \[
  \|\boldsymbol\Gamma^{1/2}\mathbf f\|^2=\sum_{j=1}^p\sum_{k\geq 1}\mu_k^{(j)}\langle f_j,e_k^{(j)}\rangle_j^2\leq c\sum_{k\geq 1}v_k\langle\mathbf f,\boldsymbol\varphi^{(k)}\rangle^2
 \]
meaning that $\boldsymbol\Gamma\in\mathcal N_{v}(pc) $.

\begin{corollary}[Rates of convergence]\label{cor:rates}

We suppose that all assumptions of Theorem~\ref{thm:oracle_pred} are verified and we choose, for all $j=1,\hdots,p$, 
\[
\lambda_j=A\sigma \sqrt{\frac{\ln(n)+\ln(p)}n}\sqrt{\frac1n\sum_{i=1}^n\|X_i^j\|_j^2}, 
\]
with $A>0$ a numerical constant.

We also suppose that there exist $\gamma\geq1/2$ and $b>0$, such that 
\[
v_k = k^{-2\gamma} \text{ and }\frak b_k\asymp k^{2b}.
\]
and that there exists $\gamma(s)\geq 1/2$ such that
\[
\kappa^{(m)}(s)\asymp m^{-2\gamma(s)}.
\]
Then, there exist two quantities $C,C'>0$, such that, if $|J(\boldsymbol\beta^*)|\leq s$, with probability larger than $1-C/n$, 
\begin{equation}\label{eq:rates}
\sup_{\boldsymbol\beta^*\in\mathcal E_{\frak b}(R),\boldsymbol\Gamma\in\mathcal N_v(c)}\left\|\widehat{\boldsymbol\beta}_{\boldsymbol\lambda,\widehat m}-\boldsymbol\beta^*\right\|_{\Gamma}^2\leq C'\left(\frac{s(\ln(p)+\ln(n))+\ln^2(n)}{n}\right)^{\frac{b+\gamma}{b+\gamma(s)+\gamma}}. 
\end{equation}

\end{corollary}

The proof relies on the results of Theorem~\ref{thm:oracle_pred}.
 
The polynomial decrease of the eigenvalues $(\mu_k)_{k\geq 1}$ of the operator $\boldsymbol\Gamma$ is also a usual assumption. The Brownian bridge and the Brownian motion on $\mathbf H=\mathbb H_1=\mathbb L^2([0,1])$ verify it with $\gamma=1$.

Remark that the rate of convergence of the selected estimator $\widehat{\boldsymbol\beta}_{\boldsymbol\lambda,\widehat m}$ is the same as the one of $\widehat{\boldsymbol\beta}_{\boldsymbol\lambda,m^*}$ where
\[
m^*\sim \left(\frac{n}{s(\ln(n)+\ln(p))+\ln^2(n)}\right)^{\frac1{2b+2\gamma(s)+2\gamma}}
\]
has the order of the optimal value of $m$ in the upper-bound of Equation~\eqref{eq:oracle_proj}.

%

We do not know however the exact order of the minimax rate when the solution $\boldsymbol\beta^*$ is sparse and if it can be achieved by either $\widehat{\boldsymbol\beta}_{\boldsymbol\lambda,m}$ or $\widehat{\boldsymbol\beta}_{\boldsymbol\lambda,\infty}$.

\section{Computing the Lasso estimator}
\label{sec:algo}

The purpose of this section is to explain how the estimators $\estim$ and $\widehat{\boldsymbol\beta}_{\boldsymbol\lambda,\widehat m}$ are computed. We first describe an algorithm which allows to obtain an approximation of $\estim$ by adapting to infinite dimension an existing finite dimensional algorithm. We then explain, in subsection~\ref{subsec:choicer}, how we choose the parameter $\boldsymbol\lambda$ (both for $\estim$ and $\widehat{\boldsymbol\beta}_{\boldsymbol\lambda,\widehat m}$) and in subsection~\ref{subsec:construction_basis} how a projection space $\mathbf H^{(m)}$ can be chosen to construct the estimator $\widehat{\boldsymbol\beta}_{\boldsymbol\lambda,\widehat m}$. Finally, we define a method to reduce the usual bias of Lasso type estimators in subsection~\ref{subsec:tikho}.

\subsection{Computational algorithm}
We propose the following algorithm to compute an approximation of  $\estim$. It can also be adapted to obtain an approximation of $\widehat{\boldsymbol\beta}_{\boldsymbol\lambda,m}$, even if, for this estimator, the usual algorithms of vanilla group-Lasso can be used directly. 

The idea is to update sequentially each coordinate $\boldsymbol\beta_1,...,\boldsymbol\beta_p$ in the spirit of the \emph{glmnet} algorithm \citep{glmnet} by solving 
\begin{equation}\label{eq:glmnet}
\begin{split}
\beta_j^{(k+1)}\in{\arg\min}_{\beta_j\in\mathbb H_j}\left\{\frac1n\sum_{i=1}^n\left(Y_i-\sum_{\ell=1}^{j-1}\ps{\boldsymbol\beta_\ell^{(k+1)}}{X_i^\ell}_\ell-\ps{\beta_j}{X_i^j}_j-\sum_{\ell=j+1}^p\ps{\boldsymbol\beta_\ell^{(k)}}{X_i^\ell}_\ell\right)^2\right.\\
\left.+2\lambda_j\n{\beta_j}_j\right\}.
\end{split}
\end{equation}
 However, in the Group-Lasso context, this algorithm is based on the so-called \emph{group-wise orthonormality condition}, which, translated to our context, amounts to suppose that the operators $\widehat{\Gamma}_j$ (or their restrictions $\widehat{\Gamma}_{j|m}$) are all equal to the identity. This assumption is not possible if $\dim(\mathbb H_j)=+\infty$ since $\widehat{\Gamma}_j$ is a finite-rank operator.  Without this condition, Equation~\eqref{eq:glmnet} does not admit a closed-form solution and, hence, is not calculable. We then propose a variant of the GPD (Groupwise-Majorization-Descent) algorithm, initially defined by \citet{YZ15} for Group-Lasso type optimization problems, without imposing the group-wise orthonormality condition. 
The GPD algorithm is also based on the principle of coordinate descent but the minimisation problem~\eqref{eq:glmnet} is modified in order to relax the group-wise orthonormality condition. We denote by $\widehat\bbeta^{(k)}$ the value of the parameter at the end of iteration $k$. During iteration $k+1$, we update sequentially each coordinate. Suppose that we have changed the $j-1$ first coordinates, the current value of our estimator is $(\widehat\beta_1^{(k+1)},..., \widehat\beta^{(k+1)}_{j-1},\widehat\beta^{(k)}_{j},...,\widehat\beta_p^{(k)})$. We want to update the $j$-th coefficient and, ideally, we would like to minimise the following criterion
\[
\gamma_n(\beta_j):=\frac1n\sum_{i=1}^n\left(Y_i-\sum_{\ell=1}^{j-1}\ps{\widehat\beta_\ell^{(k+1)}}{X_i^\ell}_\ell-\ps{\beta_j}{X_i^j}_j-\sum_{\ell=j+1}^p\ps{\widehat\beta_\ell^{(k)}}{X_i^\ell}_\ell\right)^2+2\lambda_j\|\beta_j\|_j^2.
\]
We have 
\begin{align*}
\begin{split}\gamma_n(\beta_j)-\gamma_n(\widehat\beta_{j}^{(k)})&=
-\frac2n\sum_{i=1}^n(Y_i-\widetilde Y_i^{j,k})\ps{\beta_j-\widehat\beta_{j}^{(k)}}{X_i^j}_j+\frac1n\sum_{i=1}^n\ps{\beta_j}{X_i^j}_j^2\\
&\qquad-\frac1n\sum_{i=1}^n\ps{\widehat\beta_{j}^{(k)}}{X_i^j}_j^2+2\lambda_j(\n{\beta_j}_j-\n{\widehat\beta_{j}^{(k)}}_j),
\end{split}
\end{align*}
with $\widetilde Y_i^{j,k}=\sum_{\ell=1}^{j-1}\ps{\widehat\beta_\ell^{(k+1)}}{X^\ell_i}_\ell + \sum_{\ell=j+1}^{p}\ps{\widehat\beta_\ell^{(k)}}{X^\ell_i}_\ell$, and
\begin{align*}
\frac1n\sum_{i=1}^n\ps{\beta_j}{X_i^j}_j^2-\frac1n\sum_{i=1}^n\ps{\widehat\beta_{j}^{(k)}}{X_i^j}_j^2&=
\ps{\widehat{\Gamma}_j\beta_j}{\beta_j}_j-\ps{\widehat{\Gamma}_j\widehat\beta_{j}^{(k)}}{\widehat\beta_{j}^{(k)}}_j\\
&=\ps{\widehat{\Gamma}_j(\beta_j-\widehat\beta_{j}^{(k)})}{\beta_j-\widehat\beta_{j}^{(k)}}_j+2\ps{\widehat{\Gamma}_j\widehat\beta_{j}^{(k)}}{\beta_j-\widehat\beta_{j}^{(k)}}_j.
\end{align*}
Hence 
\[
\gamma_n(\beta_j)=\gamma_n(\widehat\beta_{j}^{(k)})-2\ps{R_j}{\beta_j-\widehat\beta_{j}^{(k)}}_j+\ps{\widehat{\Gamma}_j(\beta_j-\widehat\beta_{j}^{(k)})}{\beta_j-\widehat\beta_{j}^{(k)}}_j+2\lambda_j(\n{\beta_j}_j-\n{\widehat\beta_{j}^{(k)}}_j)
\]
with 
\[
R_j=\frac1n\sum_{i=1}^n(Y_i-\widetilde Y_i^{j,k})X_i^j+\widehat{\Gamma}_j\widehat\beta_{j}^{(k)}=\frac1n\sum_{i=1}^n(Y_i-\widehat Y^{j,k}_i)X_i^j,
\]
where, for $i=1,...,n$, $\widehat Y_i^{j,k}=\widetilde Y_i^{j,k}+\ps{\widehat\beta_{j}^{(k)}}{X_i^j}_j=\sum_{\ell=1}^{j-1}\ps{\widehat\beta_\ell^{(k+1)}}{X^\ell_i}_\ell + \sum_{\ell=j}^{p}\ps{\widehat\beta_\ell^{(k)}}{X^\ell_i}_\ell$ is the current prediction of $Y_i$. If $\widehat{\Gamma}_j$ is not the identity, we can see that the minimisation of $\gamma_n(\beta_j)$ has no explicit solution. To circumvent the problem the idea is to upper-bound the quantity 
\[
\ps{\widehat{\Gamma}_j(\beta_j-\widehat\beta_{j}^{(k)})}{\beta_j-\widehat\beta_{j}^{(k)}}_j\leq \rho(\widehat{\Gamma}_j)\n{\beta_j-\widehat\beta_{j}^{(k)}}_j^2\leq N_j\n{\beta_j-\widehat\beta_{j}^{(k)}}_j^2,
\]
where $N_j:=\frac1n\sum_{i=1}^n\n{X_i^j}^2_j$ is an upper-bound on the spectral radius $\rho(\widehat{\Gamma}_j)$ of $\widehat{\Gamma}_j$. Instead of minimising $\gamma_n$ we minimise its upper-bound
\[
\widetilde\gamma_n(\beta_j)=-2\ps{R_j}{\beta_j}_j+N_j\n{\beta_j-\widehat\beta_{j}^{(k)}}_j^2+2\lambda_j\n{\beta_j}_j. 
\]
The minimisation problem of $\widetilde\gamma_n$ has an explicit solution 
\begin{equation}\label{eq:updates}
\widehat\beta_j^{(k+1)}=\left(\widehat\beta_j^{(k)}+\frac{R_j}{N_j}\right)\left(1-\frac{\lambda_j}{\n{N_j\widehat\beta_j^{(k)}+R_j}_j}\right)_+.
\end{equation}

After an initialisation step $(\boldsymbol\beta^{(0)}_1,...,\boldsymbol\beta^{(0)}_p)$, the updates on the estimated coefficients are then given by Equation~\eqref{eq:updates}.
 
Remark that, for the case of Equation~\eqref{eq:LASSOinf}, the optimisation is done directly in the space $\mathbf H$ and does not require the data to be projected. Consequently, it avoids the loss of information and the computational cost due to the projection of the data in a finite dimensional space, as well as, for data-driven basis such as PCA or PLS, the computational cost of the calculation of the basis itself.

 \subsection{Choice of smoothing parameters $(\lambda_j)_{j=1,...,p}$}
 \label{subsec:choicer}
Following Proposition~\ref{prop:oracle}, we choose $\lambda_j=\lambda_j(r)=r \left(\frac1n\sum_{i=1}^n\|X_i^j\|_j^2\right)^{1/2}$, for all $j=1,...,p$. This allows to restrain the problem of the calibration of the $p$ parameters $\lambda_1,...,\lambda_p$ to the calibration of only one parameter $r$. In this section, we write 	$\boldsymbol\lambda(r)=(\lambda_1(r),\hdots,\lambda_p(r))$ and $\widehat{\boldsymbol\beta}_{\boldsymbol\lambda(r),m}$ the corresponding minimiser of criterion~\eqref{eq:LASSOproj} if $m<+\infty$ or \eqref{eq:LASSOinf} if $m=+\infty$.

Drawing inspiration from~\citet{glmnet}, we consider a pathwise coordinate descent scheme starting from the following value of $r$,
$$r_{\max}=\max_{j=1,...,p}\left\{\frac{\n{\frac1n\sum_{i=1}^nY_iX_i^j}_j}{\sqrt{\frac1n\sum_{i=1}^n\n{X_i^j}_j^2}}\right\}.$$
It can be proven that, taking $r=r_{\max}$, the solution of the minimisation problem~\eqref{eq:LASSOinf} is $\widehat{\boldsymbol\beta}_{\boldsymbol\lambda(r_{\max})}=(0,...,0)$.  Starting from this value of $r_{\max}$, we choose a grid decreasing from $r_{\max}$ to $r_{\min}=\delta r_{\max}$ of $n_r$ values equally spaced in the log scale i.e. 
\begin{eqnarray*}
\mathcal R&=&\left\{\exp\left(\log(r_{\min})+(k-1)\frac{\log(r_{\max})-\log(r_{\min})}{n_r-1}\right), k=1,...,n_r\right\}\\
&=&\{r_k, k=1,...,n_r\}.
\end{eqnarray*}
For each $k\in\{1,...,n_r-1\}$, the minimisation of criterion~\eqref{eq:LASSOinf} (resp.~\eqref{eq:LASSOproj}) with $r=r_k$ is then performed using the result of the minimisation of~\eqref{eq:LASSOinf} (resp.~\eqref{eq:LASSOproj}) with $r=r_{k+1}$ as an initialisation. As pointed out by \citet{glmnet}, this scheme leads to a more stable and faster algorithm. In practice, we chose $\delta=0.001$ and $n_r=100$. However, when $r$ is too small, the algorithm does not always converge, in particular when the dimension is large or infinite. We believe that it is linked with the fact that the optimisation problem~\eqref{eq:LASSOinf} has no solution as soon as $\dim(\mathbf H)\geq {\rm rk}(\widehat{\boldsymbol\Gamma})$ and $\boldsymbol\lambda=0$.

In the case where the noise variance is known, Theorem~\ref{prop:oracle} suggests the value $$r_n=4\sqrt{2}\sigma\sqrt{p\ln(q)/n}.$$ We recall that Equation~\eqref{eq:oracle} is obtained with probability $1-p^{1-q}$. Hence, if we want a precision better than $1-\alpha$, we take $q=1-\ln(\alpha)/\ln(p)$. However, in practice, the parameter $\sigma^2$ is usually unknown. We propose three methods to choose the parameter $r$ among the grid $\mathcal R$ and compare them in the simulation study.

\subsubsection{$V$-fold cross-validation}
We split the sample $\{(Y_i,\mathbf X_i), i=1,...,n\}$ into $V$ subsamples $\{(Y^{(v)}_i,\mathbf X^{(v)}_i), i\in I_v\}$, $v=1,...,V$, where $I_v=\lfloor(v-1)n/V\rfloor+1,...,\lfloor vn/V\rfloor$, $Y^{(v)}_i=Y_{\lfloor(v-1)n/V\rfloor+i}$, $\mathbf X^{(v)}_i=\mathbf X_{\lfloor(v-1)n/V\rfloor+i}$ and, for $x\in\R$, $\lfloor x\rfloor$ denotes the largest integer smaller than $x$. 

For all $v\in V$, $i\in I_v$, $r\in\mathcal R$ let 
$$\widehat Y_{i}^{(v,r)}=\langle\widehat{\boldsymbol\beta}^{(-v)}_{\boldsymbol\lambda(r),m},\mathbf X_i\rangle$$
be the prediction made with the estimator of $\boldsymbol\beta^*$ minimising criterion \eqref{eq:LASSOinf} (or~\eqref{eq:LASSOproj}) using only the data $\left\{(\mathbf X^{(v')}_i,Y^{(v')}_i), i\in I_{v'}, v\neq v'\right\}$. 

We choose the value of $r_n$ minimising the mean of the cross-validated error: 
$$\widehat r_n^{(CV)}\in{\arg\min}_{r\in\mathcal R}\left\{\frac1n\sum_{v=1}^V\sum_{i\in I_v}\left(\widehat Y_{i}^{(v,r)}-Y_i^{(v)}\right)^2\right\}. $$

\subsubsection{Estimation of $\sigma^2$}
We propose the following estimator of $\sigma^2$: 
\[
\widehat\sigma^2 = \frac1n\sum_{i=1}^n\left(Y_i-\ps{\widehat{\boldsymbol\beta}_{\boldsymbol\lambda(\widehat r_{\min}),m}}{\mathbf X_i}\right)^2,
\]
where $\widehat r_{\min}$ is an element of $r\in\mathcal R$. 

In practice, we take the smallest element of $\mathcal R$ for which the algorithm converges.

We set
\[
\widehat r_n^{(\widehat\sigma^2)}:=4\sqrt{2}\widehat\sigma\sqrt{p\ln(q)/n}\text{ with }q=1-\ln(5\%)/\ln(p).
\]

\subsubsection{BIC criterion}
We also consider the BIC criterion, as proposed by \citet{WLT07,WL07},
\[
\widehat r_n^{(BIC)}\in{\arg\min}_{r\in\mathcal R}\left\{\log(\widehat\sigma_r^2)+|J(\widehat{\boldsymbol\beta}_{\boldsymbol\lambda(r),m})|\frac{\log(n)}n\right\}.
\]

The corresponding values of $\boldsymbol\lambda$ will be denoted respectively by $\widehat{\boldsymbol\lambda}^{(CV)}:=\boldsymbol\lambda(\widehat r_n^{(CV)})$, $\widehat{\boldsymbol\lambda}^{(\widehat\sigma^2)}:=\boldsymbol\lambda(\widehat r_n^{(\widehat\sigma^2)})$ and $\widehat{\boldsymbol\lambda}^{(BIC)}:=\boldsymbol\lambda(\widehat r_n^{(BIC)})$. The practical properties of the three methods are compared in Section~\ref{sec:simus}.

\subsection{Construction of the projected estimator}
\label{subsec:construction_basis}

The projected estimator relies mainly on the choice of the basis $(\boldsymbol\varphi^{(k)})_{k\geq 1}$. To verify the support stability condition $C_{supp}$, a possibility is to proceed as follows.
\begin{itemize}
\item Choose, for all $j=1,\hdots,p$ an orthonormal basis of $\mathbb H_j$, denoted by $(e_k^{(j)})_{1\leq j\leq\dim(\mathbb H_j)}$. 
\item Choose a bijection
\[
\boldsymbol\sigma:\begin{array}{ccl}
\mathbb N\backslash\{0\}&\to		& \{(j,k)\in\{1,\hdots,p\}\times \mathbb N\backslash\{0\}, k\leq \dim(\mathbb H_j)\}\subseteq\mathbb N^2 \\
k 		&\mapsto	& (\sigma_1(k),\sigma_2(k)). 
\end{array}
\]
\item Define 
\[
\boldsymbol\varphi^{(k)}:=(0,\hdots,0,e_{\sigma_2(k)}^{(\sigma_1(k))},0,\hdots,0)=\left(e_{\sigma_2(k)}^{(\sigma_1(k))}\mathbf 1_{\{j=\sigma_1(k)\}}\right)_{1\leq j\leq p}.
\]
\end{itemize}

There are many ways to choose the basis $(e_k^{(j)})_{1\leq k\leq\dim(\mathbb H_j)}$, $j=1,\hdots,p$ as well as the bijection $\boldsymbol\sigma$, depending on the nature of the spaces $\mathbb H_1,\hdots,\mathbb H_p$. We give here some examples.

\begin{description}
\item[Example 1: fixed basis and fixed bijection $\boldsymbol\sigma$] Suppose $\mathbb H_1=\hdots=\mathbb H_{p_\infty}=\mathbb L^2([0,1])$ and $\mathbb H_j$ are finite-dimensional for all $j=p_\infty+1,\hdots,p$.  For $j=1,\hdots,p_{\infty}$ $(e_k^{(j)})_{k\geq 1}$ is e.g. the Fourier basis 
\[
e_1^{(j)}\equiv 1, e_{2k}^{(j)}(t)=\sqrt{2}\cos(2\pi k t)\text{ and }e^{(j)}_{2k+1}(t)=\sqrt{2}\sin(2\pi k t),
\]
and, for $j=p_\infty+1,\hdots,p$, $(e_1^{(j)},\hdots,e^{(j)}_{\dim(\mathbb H_j)})$ is the canonical basis of the finite-dimensional space $\mathbb H_j$. Choosing the bijection $\boldsymbol\sigma(1)=(1,1)$, $\boldsymbol\sigma(2)=(2,1)$,...,$\boldsymbol\sigma(p)=(p,1)$, $\boldsymbol\sigma(p+1)=(1,2)$, $\boldsymbol\sigma(p+2)=(2,2)$,... leads to the basis 
\begin{eqnarray*}
\boldsymbol\varphi^{(1)}&:=&(e_1^{(1)},0,\hdots,0)\\
\boldsymbol\varphi^{(2)}&:=&(0,e_1^{(2)},0,\hdots,0)\\
&\vdots&\\
\boldsymbol\varphi^{(p)}&:=&(0,\hdots,0,e_1^{(p)})\\
\boldsymbol\varphi^{(p+1)}&:=&(e_2^{(1)},0,\hdots,0)\\
\boldsymbol\varphi^{(p+2)}&:=&(0,e_2^{(2)},0,\hdots,0)\\
&\vdots&
\end{eqnarray*}
\item[Example 2: fixed basis with random bijection $\boldsymbol\sigma$] A disadvantage of the previous example is that it gives particular importance to the first variables which is not necessarily justified by the data. A possible way to circumvent the problem is to define a random permutation $\boldsymbol\sigma$. Using the same notations as in Example 1, we can define e.g. $\boldsymbol\sigma$ as follows:
\begin{enumerate}
\item Choose $\sigma_1(1)$ uniformly in $\{1,\hdots,p\}$. 
\item If $\sigma_1(1)\leq p_\infty$, $\sigma_2(1)=1$, otherwise $\sigma_2(1)$ is chosen uniformly in $\{1,\hdots,\dim(\mathbb H_j)\}$. 
\end{enumerate}
Proceed in a similar way for $k=2,3,...$ respecting the constraint $\boldsymbol\sigma(k)\neq\boldsymbol\sigma(k')$ for $k\neq k'$.
\item[Example 3: PCA basis with data-driven choice of the bijection $\boldsymbol\sigma$]  Let, for $j=1,\hdots,p$, $(\widehat e_k^{(j)})_{1\leq k\leq\dim(\mathbb H_j)}$ the PCA basis of $\{X_i^j, i=1,\hdots,n\}$, that is to say a basis of eigenfunctions (if $\mathbb H_j$ is a function space) or eigenvectors (if $\dim(\mathbb H_j)<+\infty$) of the covariance operator $\widehat\Gamma_j$. We denote by $(\widehat\mu_k^{(j)})_{1\leq k\leq\dim(\mathbb H_j)}$ the corresponding eigenvalues. This naturally provides a data-driven choice of the bijection $\boldsymbol\sigma$ the can be defined such that $(\widehat\mu_{\sigma_2(k)}^{(\sigma_1^{(k)})})_{k\geq 1}$ is sorted in decreasing order.  
Since the elements of the PCA basis are data-dependent, but depend only on the $\mathbf X_i$'s, the results of Section~\ref{sec:ineg_oracle_empirical} hold but not the results of Section~\ref{sec:ineg_oracle_prediction}. Similar results for the PCA basis could be derived from the theory developed in~\citet{MR15,BMR16} at the price of further theoretical considerations which are out of the scope of the paper. We follow in Section~\ref{sec:simus} an approach based on the principal components basis (PCA basis). Other data-driven basis such as the Partial Leasts Squares (PLS, \citealt{preda_pls_2005,wold_soft_1975}) can also be considered in practice. 
\end{description}

 \subsection{Tikhonov regularization step}
 \label{subsec:tikho}
 It is well known that the classical Lasso estimator is biased \citep[see e.g.][Section 4.2.5]{Giraud15} because the $\ell^1$ penalization favors too strongly solutions with small $\ell^1$ norm. To remove it, one of the current method, called Gauss-Lasso, consists in fitting a least-squares estimator on the sparse regression model constructed by keeping only the coefficients which are on the support of the Lasso estimate. 
 
 This method is not directly applicable here because least-squares estimators are not well-defined in infinite-dimensional contexts. Indeed, to compute a least-squares estimator of the coefficients in the support $\widehat J$ of the Lasso estimator, we need to invert the covariance operator $\widehat{\boldsymbol{\boldsymbol\Gamma}}_{\widehat J}$ which is generally not invertible. 
 
 To circumvent the problem, we propose a ridge regression approach (also named Tikhonov regularization below) on the support of the Lasso estimate.  A similar approach has been investigated by \citet{LY13} in high-dimensional regression. They have shown the unbiasedness of the combination of Lasso and ridge regression. 
More precisely, we consider the following minimisation problem
 \begin{equation}\label{eq:ridge}
 \widetilde\beta = {\arg\min}_{\boldsymbol\beta\in\mathbf{H}_{J(\widehat{\boldsymbol\beta})}}\left\{\frac1n\sum_{i=1}^n\left(Y_i-\ps{\boldsymbol\beta}{\mathbf X_i}\right)^2+ \rho\n{\boldsymbol\beta}^2\right\}
 \end{equation}
 with $\rho>0$ a parameter which can be selected e.g. by $V$-fold cross-validation. 
 We can see that 
 $$\widetilde\beta=(\widehat{\boldsymbol{\boldsymbol\Gamma}}_{\widehat J}+\rho I)^{-1}\widehat{\boldsymbol\Delta},$$
with $\widehat{\boldsymbol\Delta}:=\frac1n\sum_{i=1}^nY_i\Pi_{\widehat J}\mathbf X_i$,
 is an exact solution of problem~\eqref{eq:ridge} but need the inversion of the operator $\widehat{\boldsymbol\Gamma}_{\widehat J}+\rho I$ to be calculated in practice. In order to compute the solution of \eqref{eq:ridge}, we define below a stochastic gradient descent algorithm. The algorithm is initialised at the solution $\widetilde{\boldsymbol\beta}^{(0)}=\widehat{\boldsymbol\beta}_{\boldsymbol\lambda(\widehat r_n^{(\widehat\sigma^2)}),m}$ (where $m=\infty$ or $m=\widehat m$) of the Lasso and, at each iteration, we do 
 \begin{equation}\label{eq:optimtikho}
 \widetilde{\boldsymbol\beta}^{(k+1)}=\widetilde{\boldsymbol\beta}^{(k)}-\alpha_k\gamma_n'(\widetilde{\boldsymbol\beta}^{(k)}),
 \end{equation}
 where 
 $$\gamma_n'(\boldsymbol\beta)=-2\boldsymbol{\widehat\Delta}+2(\widehat{\boldsymbol{\boldsymbol\Gamma}}_{\widehat J}+\rho I)\boldsymbol\beta,$$
 is the gradient of the criterion to minimise. 
 
 In practice we choose $\alpha_k=\alpha_1 k^{-1}$ with $\alpha_1$ tuned in order to get convergence at reasonable speed.

    \section{Numerical study}
    \label{sec:simus}
    
 In this section, we study practical properties of both estimators $\estim$ and $\widehat{\boldsymbol\beta}_{\boldsymbol\lambda,\widehat m}$. We first consider a context where the data are simulated and then an application to the prediction of electricity consumption.
 
\subsection{Simulation study}
We test the algorithm on two examples : 
$$Y=\ps{\boldsymbol\beta^{*,k}}{\mathbf X}+\varepsilon, k=1,2,$$
where $p=7$, $\Hb_1=\Hb_2=\Hb_3=\mathbb L^2([0,1])$ equipped with its usual scalar product $\ps{f}{g}_{\mathbb L^2([0,1])}=\int_0^1f(t)g(t)dt\text{ for all }f,g$, $\Hb_4=\R^4$ equipped with its scalar product $(a,b)=~^tab$, $\Hb_5=\Hb_6=\Hb_7=\R$, $\varepsilon\sim\mathcal N(0,\sigma^2)$ with $\sigma=0.01$. The size of the sample is fixed to $n=1000$. The definitions of $\boldsymbol\beta^{*,1}$,  $\boldsymbol\beta^{*,2}$ and $\mathbf X$ are given in Table~\ref{tab:lois}. 

\begin{table}
\footnotesize
\begin{tabular}{|c|l|l|p{0.6\textwidth}|}
\hline
      &Example 1&Example 2&\\
$j$&$\beta_j^{*,1}$&$\beta_j^{*,2}$&$X_j$\\
\hline
&&&\\
1&$t\mapsto10\cos(2\pi t)$&$t\mapsto10\cos(2\pi t)$& Brownian motion on $[0,1]$\\
&&&\\
2&0&0&$t\mapsto a + bt + c\exp(t) + \sin(dt)$ with $a\sim\mathcal U([-50,50])$, $b\sim\mathcal U([-30,30])$, $c\sim\mathcal ([-5,5])$ and $d\sim\mathcal U([-1,1])$, $a$, $b$, $c$ and $d$ independent \citep{FV00}\\
&&&\\
3&0&0&$X_2^2$\\
&&&\\
4&0&$(1,-1,0,3)^t$&$Z ~^tA$ with $Z=(Z_1,...,Z_4)$, $Z_k\sim\mathcal U([-1/2,1/2])$, $k=1,...,4$, $A=\begin{pmatrix}-1&0&1&2\\3&-1&0&1\\2&3&-1&0\\1&2&3&-1\end{pmatrix}$\\
&&&\\
5&0&0&$\mathcal N(0,1)$\\
&&&\\
6&0&0&$\|X_2\|_{\mathbb L^2([0,1])}-\mathbb E[\|X_2\|_{\mathbb L^2([0,1])}]$\\
&&&\\
7&0&1&$\|\log(|X_1|)\|_{\mathbb L^2([0,1])}-\mathbb E[\|\log(|X_1|)\|_{\mathbb L^2([0,1])}]$\\
&&&\\
\hline
\end{tabular}
\caption{\label{tab:lois}Values of $\boldsymbol\beta^{*,k}$ and $\mathbf X$}
\end{table}

\subsection{Support recovery properties and parameter selection}

\begin{figure}
\begin{tabular}{cc}
Example 1 & Example 2\\
\includegraphics[width=0.45\textwidth]{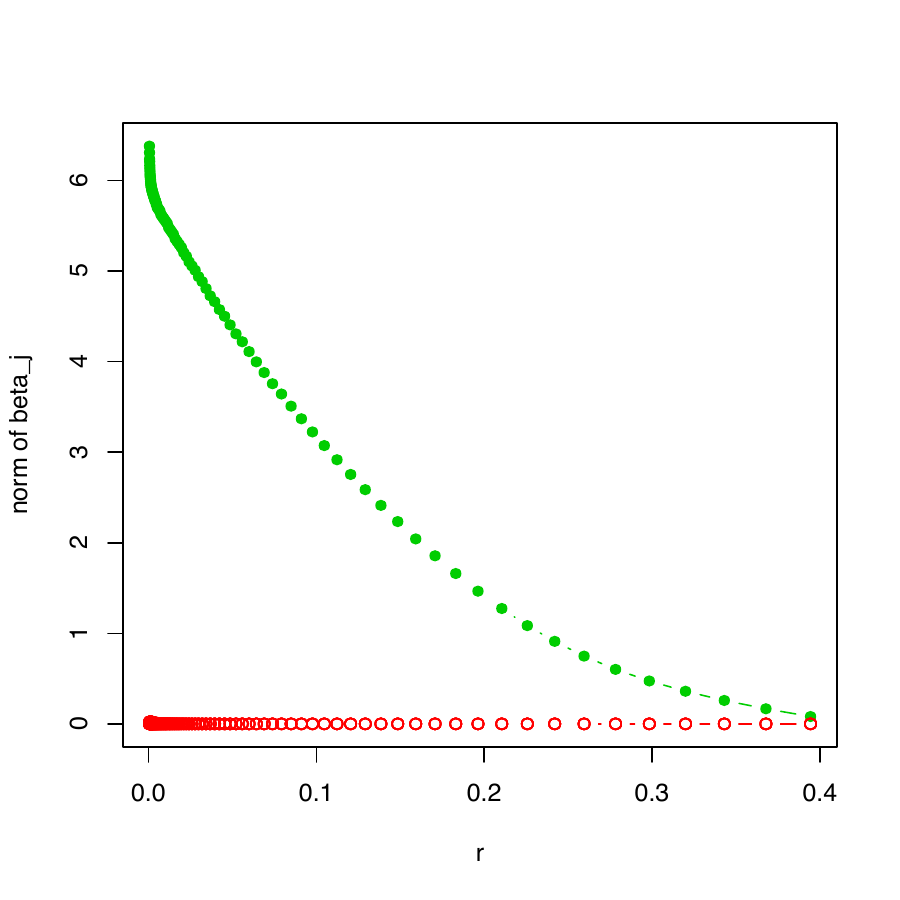}&\includegraphics[width=0.45\textwidth]{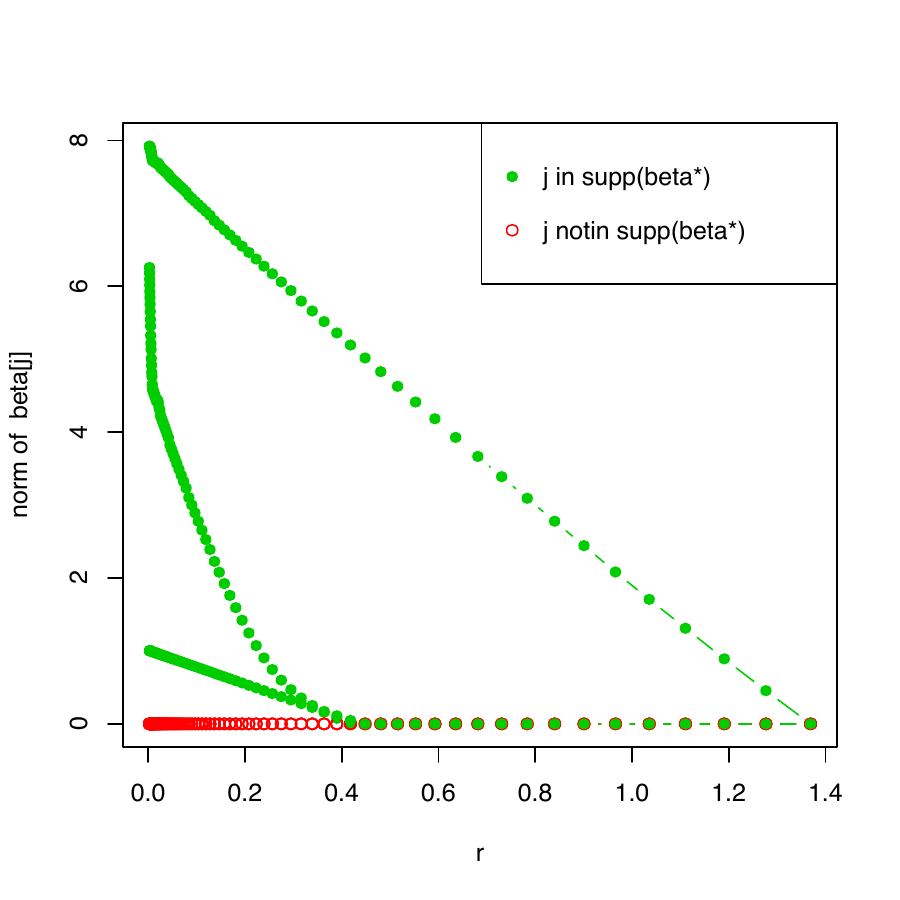}
\end{tabular}
\caption{\label{fig:normes_r} Plot of the norm of $\left[\widehat{\boldsymbol\beta}_{\boldsymbol\lambda,\infty}\right]_j$, for $j=1,...,7$ as a function of $r$.}
\end{figure}

In Figure~\ref{fig:normes_r}, we plot the norm of $\left[\widehat{\boldsymbol\beta}_{\boldsymbol\lambda,\infty}\right]_j$ as a function of the parameter $r$. We see that, for all values of $r$, we have $\widehat J\subseteq J^*$, and, if $r$ is sufficiently small $\widehat J=J^*$. We compare in Table~\ref{tab:support} the percentage of time where the true model has been recovered when the parameter $r$ is selected with the three methods described in Section~\ref{subsec:choicer}. We see that the method based on the estimation of $\widehat\sigma^2$ has very good support recovery performances, but both BIC and CV criterion do not perform well. Since the CV criterion minimises an empirical version of the prediction error, it tends to select a parameter for which the method has good predictive performances.  However, this is not necessarily associated with good support recovery properties which could explain the bad performances of the CV criterion in terms of support recovery.  As a consequence, the method based on the estimation of $\sigma^2$ is the only one which is considered for the projected estimator $\widehat{\boldsymbol\beta}_{\boldsymbol\lambda,\widehat m}$ and in the sequel we will denote simply $\widehat{\boldsymbol\lambda}=\widehat{\boldsymbol\lambda}^{(\widehat\sigma^2)}$.

\begin{table}
\begin{tabular}{|c|ccc|ccc|}
\hline
 & \multicolumn{3}{c|}{Example 1} & \multicolumn{3}{c|}{Example 2}\\
 \hline 
 & $\widehat{\boldsymbol\lambda}^{(CV)}$ & $\widehat{\boldsymbol\lambda}^{(\widehat\sigma^2)}$ & $\widehat {\boldsymbol\lambda}^{(BIC)}$& $\widehat{\boldsymbol\lambda}^{(CV)}$ & $\widehat{\boldsymbol\lambda}^{(\widehat\sigma^2)}$ & $\widehat{\boldsymbol\lambda}^{(BIC)}$\\
 \hline
 Support recovery of $\widehat{\boldsymbol{\beta}}_{\widehat{\boldsymbol\lambda},\infty}$ (\%) & 0 & 100 & 0 & 2 & 100 & 4\\
 \hline
 Support recovery of $\widehat{\boldsymbol{\beta}}_{\widehat{\boldsymbol\lambda},\widehat m}$ (\%) &$ / $& 100 & $/$& $/$& 100 & $/$\\
 \hline
\end{tabular}
\caption{\label{tab:support} Percentage of times where the true support has been recovered among 50 Monte-Carlo replications of the estimates.}
\end{table}

\subsection{Lasso estimators}
\begin{figure}
\begin{tabular}{cc}
Example 1 & Example 2\\
\includegraphics[width=0.45\textwidth,height=6cm]{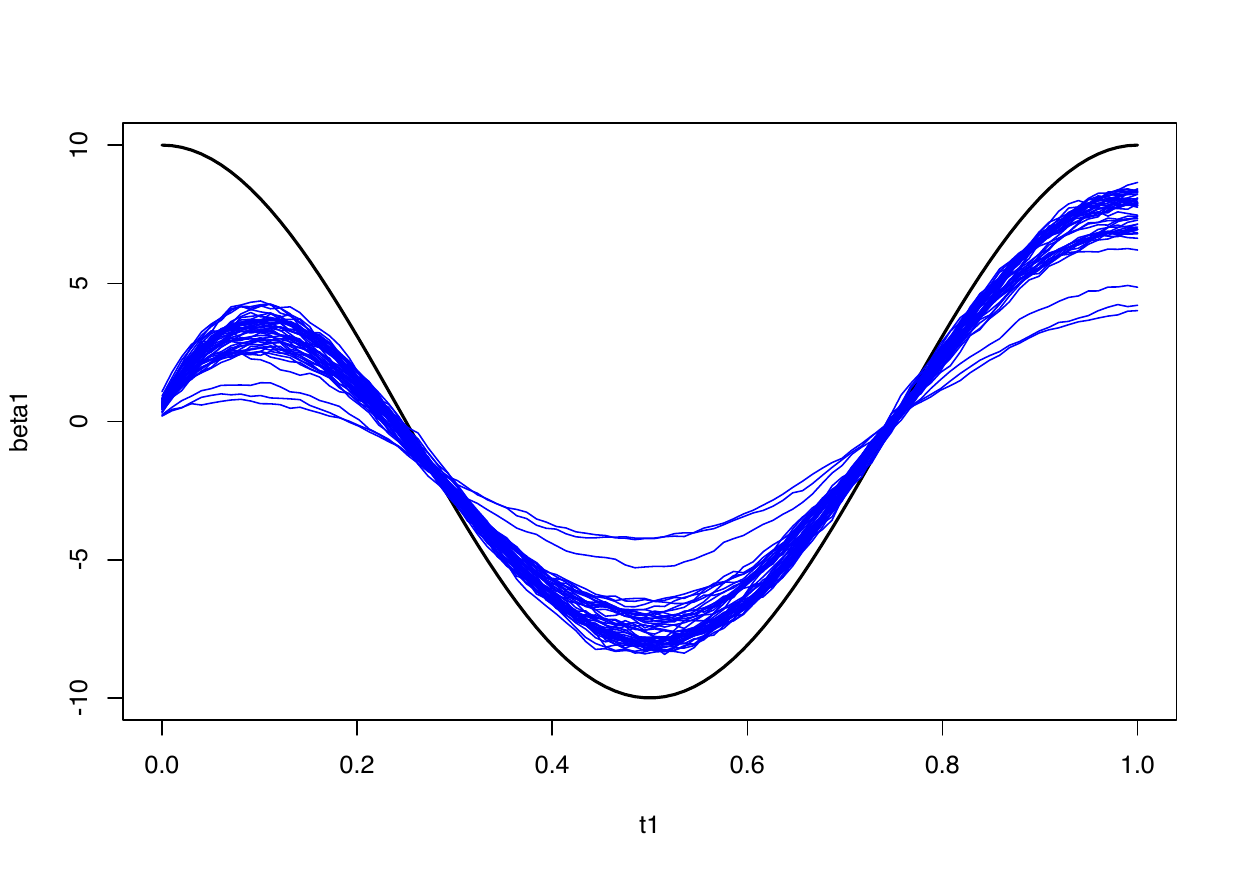}&\includegraphics[width=0.45\textwidth,height=6cm]{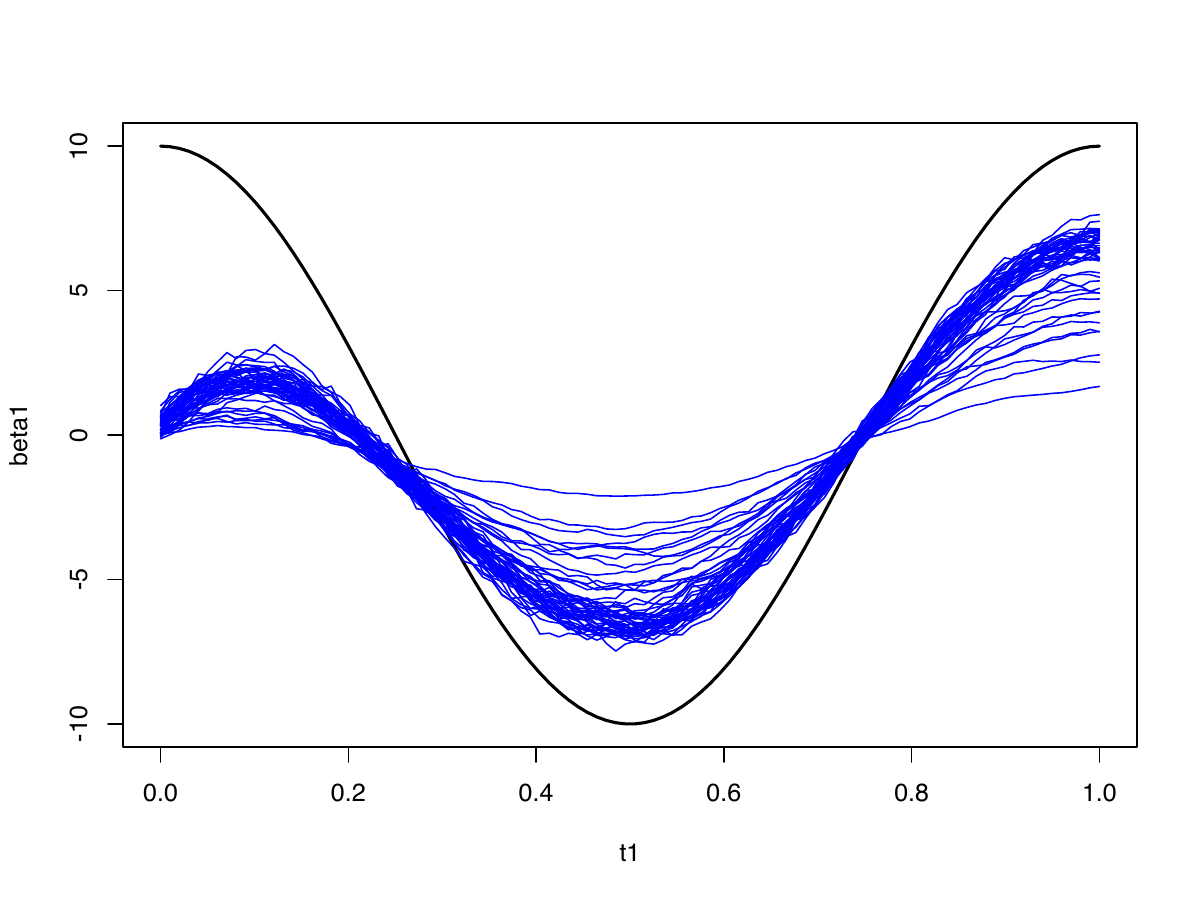}
\end{tabular}
\caption{\label{fig:beta1_Lasso_example1} Plot of $\beta_1^*$ (solid black line) and 50 Monte-Carlo replications of $\left[\widehat{\boldsymbol\beta}_{\boldsymbol\lambda,\infty}\right]_1$ (blue lines).}
\end{figure}

In Figure~\ref{fig:beta1_Lasso_example1}, we plot the first coordinate $\left[\widehat{\boldsymbol\beta}_{\widehat{\boldsymbol\lambda},\infty}\right]_1$ of Lasso estimator $\widehat{\boldsymbol\beta}_{\widehat{\boldsymbol\lambda},\infty}$ (right) and compare it with the true function $\boldsymbol\beta_1^*$. We can see that the shape of both functions are similar, but their norms are completely different.  
Hence, the Lasso estimator recovers the true support but gives biased estimators of the coefficients $\beta_j$, $j\in J^*$. 

For the projected estimator $\widehat{\boldsymbol\beta}_{\widehat\lambda,\widehat m}$, as recommended in~\citet{BMR16}, we set the value of the constant $\kappa$ of criterion~\eqref{eq:dim_selec} to $\kappa=2$. The selected dimensions are plotted in Figure~\ref{fig:dim_selec}. We can see that the dimension selected is quite large in general and that it is larger for model 2 than for model 1, which indicates that the dimension selection criterion adapts to the complexity of the model. The resulting estimators are plotted in Figure~\ref{fig:beta1_Lasso_proj}. A similar conclusion as for the projection-free estimator can be drawn concerning the bias problem.

 \begin{figure}
\begin{tabular}{cc}
Example 1 & Example 2\\
\includegraphics[width=0.45\textwidth,height=6cm]{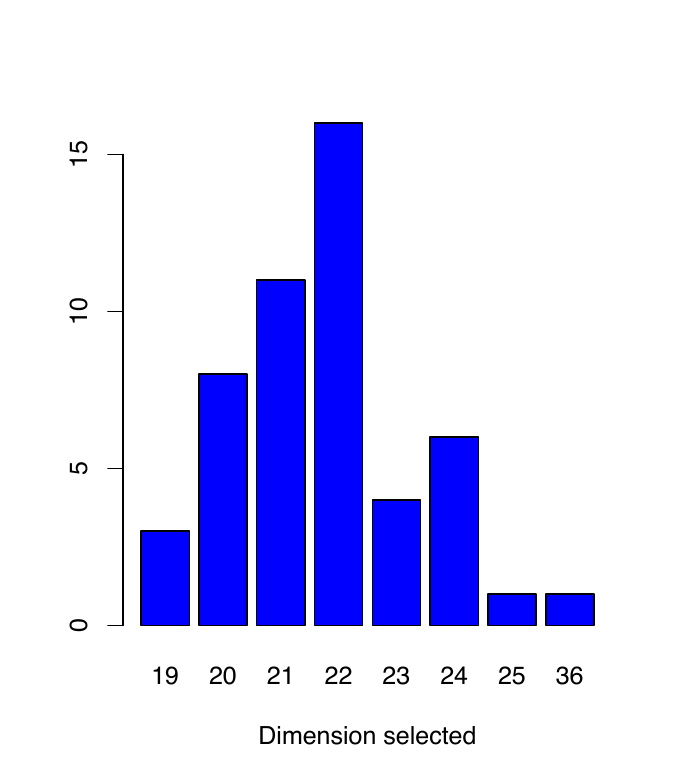}&\includegraphics[width=0.45\textwidth,height=6cm]{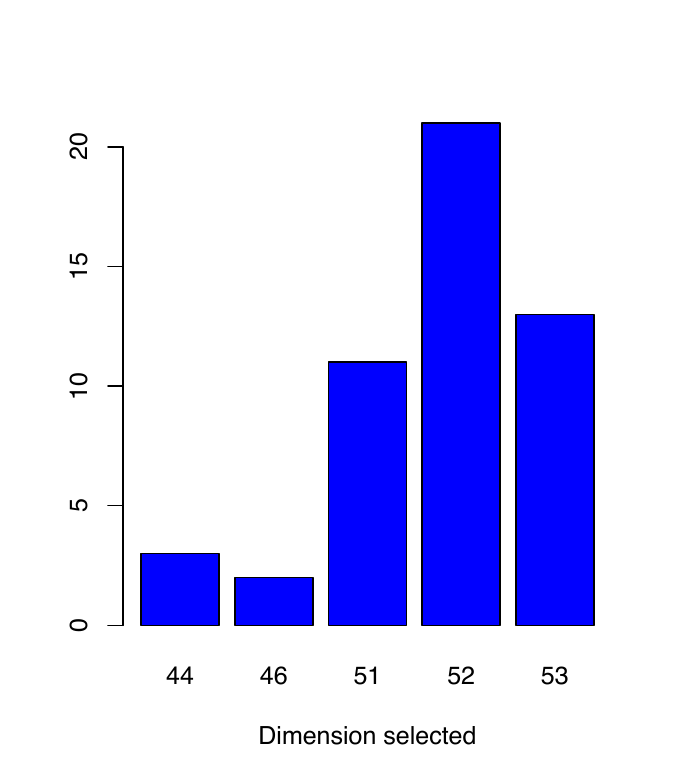}
\end{tabular}
\caption{\label{fig:dim_selec} Bar charts of dimension selected $\widehat m$ over the 50 Monte Carlo replications for the projected estimator $\widehat{\boldsymbol\beta}_{\widehat{\boldsymbol\lambda},\widehat m}$.}
\end{figure}

 \begin{figure}
\begin{tabular}{cc}
Example 1 & Example 2\\
\includegraphics[width=0.45\textwidth,height=6cm]{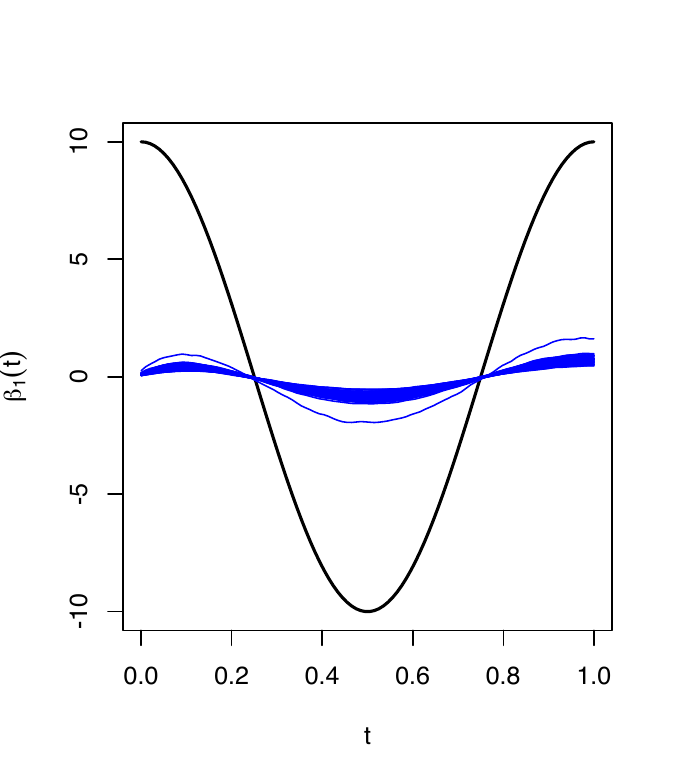}&\includegraphics[width=0.45\textwidth,height=6cm]{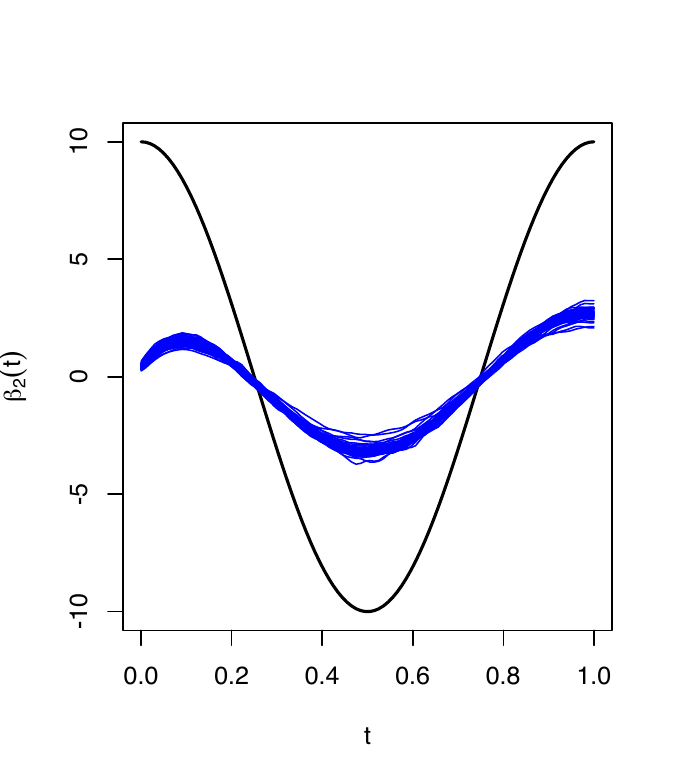}
\end{tabular}
\caption{\label{fig:beta1_Lasso_proj} Plot of $\boldsymbol\beta_1^*$ (solid black line) and 50 Monte-Carlo replications of $\left[\widehat{\boldsymbol\beta}_{\widehat{\boldsymbol\lambda},\widehat m}\right]_1$ (blue lines).}
\end{figure}

\subsection{Final estimator}
\begin{figure}
\begin{tabular}{cc}
Example 1 & Example 2\\
\includegraphics[width=0.45\textwidth]{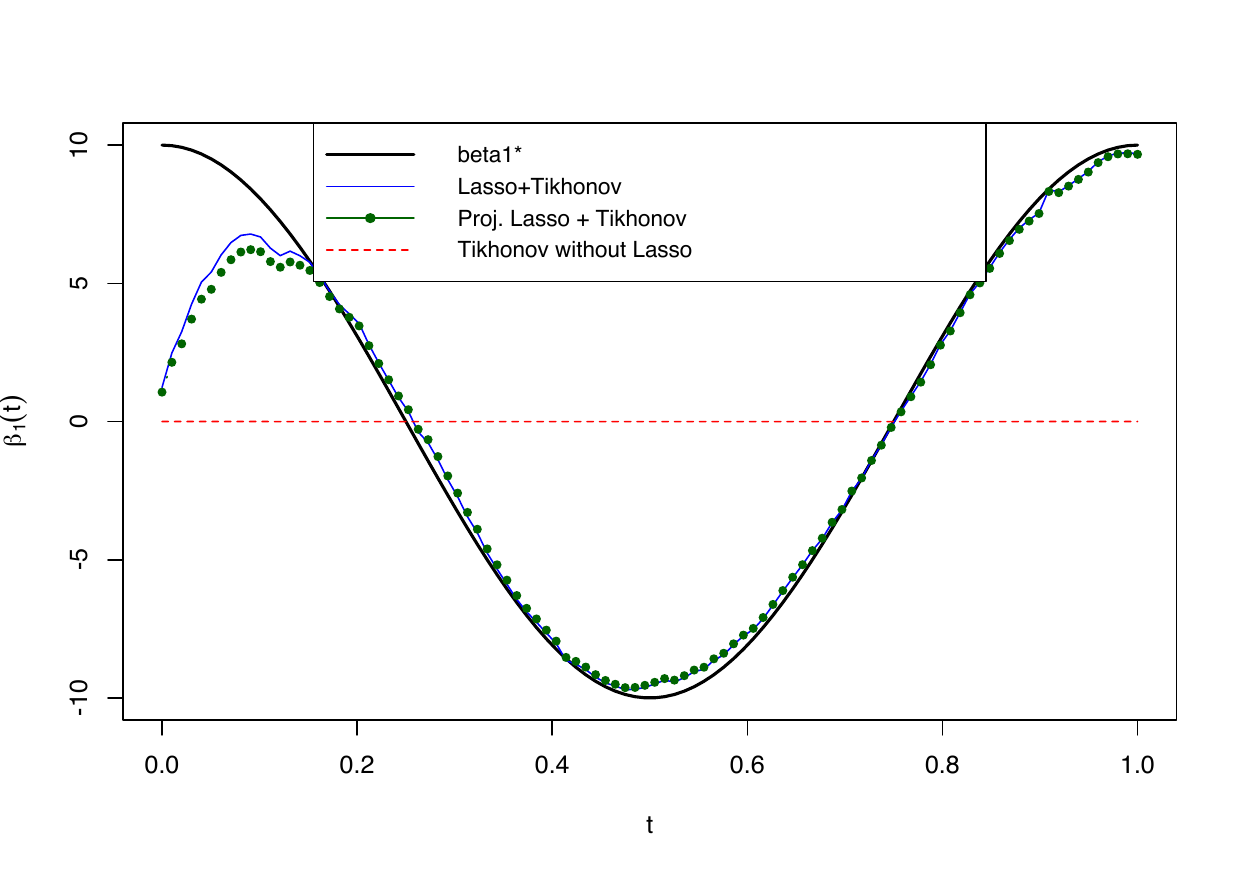}&\includegraphics[width=0.45\textwidth]{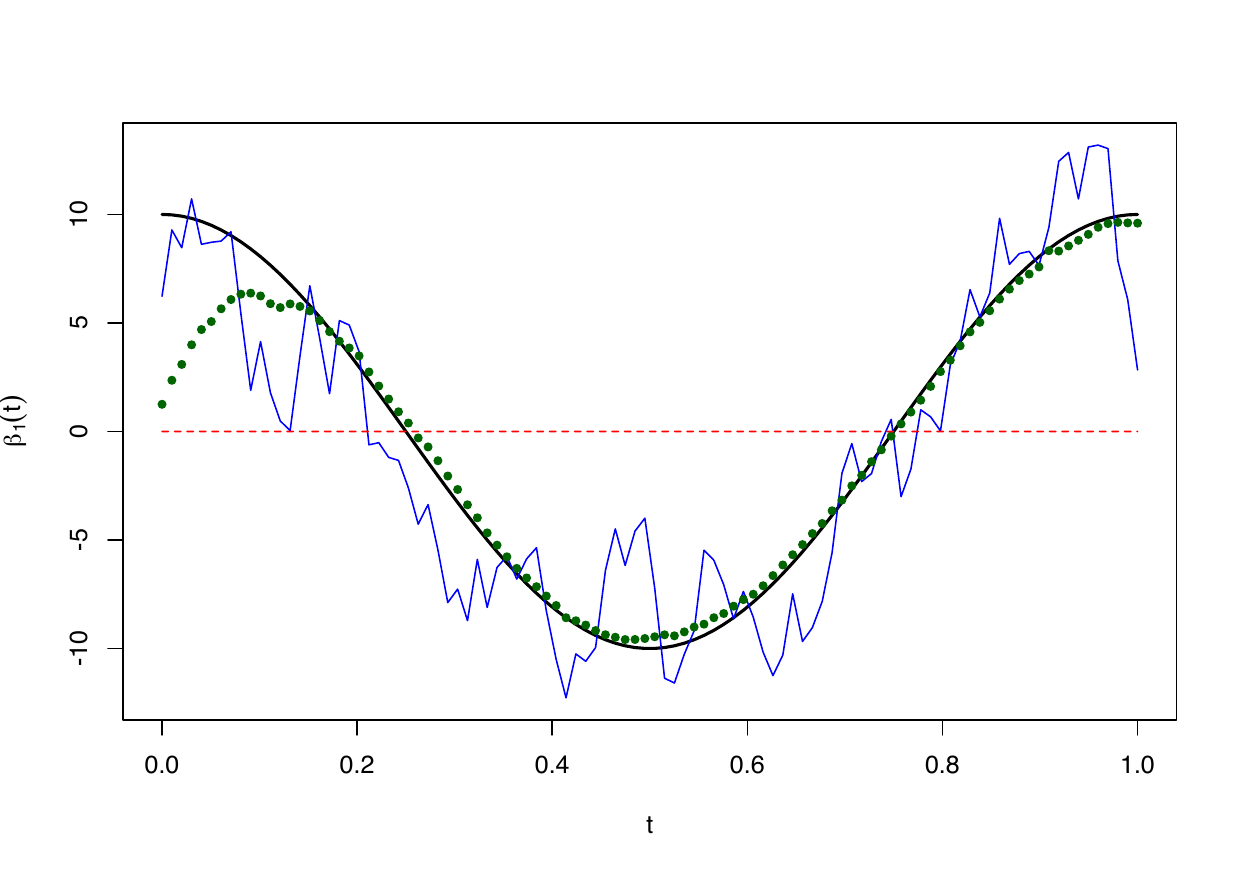}
\end{tabular}
\caption{\label{fig:beta1_Lassotikho} Plot of $\beta_1^*$ (solid black line), the solution of the Tikhonov regularization on the support of the Lasso estimator (dashed blue line) and on the whole support (dotted red line).}
\end{figure}

On Figure~\ref{fig:beta1_Lassotikho} we see that the Tikhonov regularization step reduces the bias in both examples. We can compare it with the effect of Tikhonov regularization step on the whole sample (i.e. without variable selection).  It turns out that, in the case where all the covariates are kept, the algorithm~\eqref{eq:optimtikho} converges very slowly leading to poor estimates. The computation time of the estimators on an iMac 3,06 GHz Intel Core 2 Duo -- with a non optimal code -- are given in Table~\ref{tab:time} for illustrative purposes. 
\begin{table}
\begin{tabular}{|c|c|c|c|}
\hline
		 & Lasso + Tikhonov & Proj. Lasso + Tikhonov & Tikhonov without Lasso\\
\hline
Example 1 & 7.5 min 		& 9.3 min 				&   36.0 min\\
Example 2 & 7.1 min 		& 16.6 min 			&   36.1 min\\
\hline
	\end{tabular}
	\caption{\label{tab:time} Computation time of the estimators. }
\end{table}
 
\subsection{Application to the prediction of energy use of appliances}
\label{sec:applis_elec}

The aim is to study appliances energy consumption -- which is the main source of energy consumption -- in a low energy house situated in Stambruges (Belgium). The data are energy consumption measurements of electrical appliances (\texttt{Appliances}), light (\texttt{light}), temperature and humidity in the kitchen (\texttt{T1} and \texttt{RH1}), in the living room (\texttt{T2} and \texttt{RH2}), in the laundry room (\texttt{T3} and \texttt{RH3}), in the office (\texttt{T4} and \texttt{RH4}), in the bathroom (\texttt{T5} and \texttt{RH5}), outside the building on the north side (\texttt{T6} and \texttt{RH6}), in the ironing room (\texttt{T7} and \texttt{RH7}), in the teenagers' room (\texttt{T8} and \texttt{RH8}) and in the parents' room (\texttt{T9} and \texttt{RH9}) as well as the temperature (\texttt{T$\_$out}), the pressure (\texttt{Press$\_$mm$\_$hg}), the humidity (\texttt{RH$\_$out}), wind speed (\texttt{Windspeed}), visibility (\texttt{Visibility}) and dewpoint temperature (\texttt{Tdewpoint}) from the weather station of Chievres, which is the weather station of the nearest airport.

Each variable is measured every 10 minutes from 11th january, 2016, 5pm to 27th may, 2016, 6pm. 

The data is freely available on UCI Machine Learning Repository (\url{archive.ics.uci.edu/ml/datasets/Appliances+energy+prediction}) and has been studied by~\citet{CFD17}. We refer to this article for a precise description of the experiment and a method to predict appliances energy consumption at a given time from the measurement of the other variables.

Here, we focus on the prediction of the mean appliances energy consumption of one day from the measure of each variable the day before (from midnight to midnight). We then dispose of a dataset of size $n=136$ with $p=24$ functional covariates. Our variable of interest is the logarithm of the mean appliances consumption. In order to obtain better results, we divide the covariates by their range. More precisely, the $j$-th curve of the $i$-th observation $X_i^j$ is transformed as follows 
$$X_i^j(t)\leftarrow\frac{X_i^j(t)}{\max_{i'=1,...,n; t'}X_{i'}^j(t')-\min_{i'=1,...,n; t'}X_{i'}^j(t')}.$$
Recall that usual standardisation techniques are not possible for infinite-dimensional data since the covariance operator of each covariate is non invertible. The choice of the above transformation allows us to obtain covariates of the same order. All the variables are then centered. 

We first plot the evolution of the norm of the coefficients as a function of $r$. The results are shown in Figure~\ref{fig:normes_r_elec}. 
\begin{figure}
\includegraphics[width=0.7\textwidth,height=6cm]{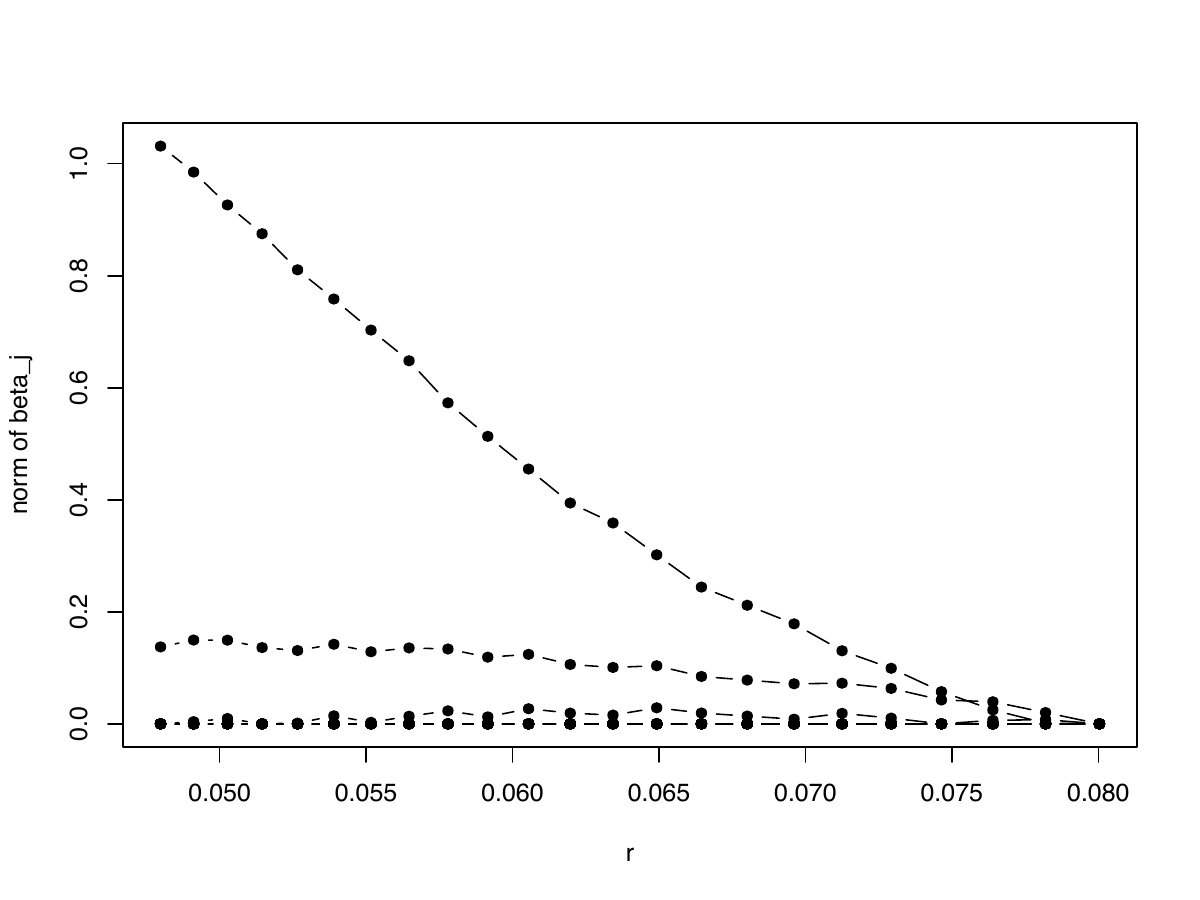}
\caption{\label{fig:normes_r_elec} Plot of the norm of $\left[\widehat{\boldsymbol\beta}_{\widehat{\boldsymbol\lambda},\infty}\right]_j$, for $j=1,...,24$ as a function of $r$.}
\end{figure}

The variables selected by the Lasso criterion are the appliances energy consumption (\texttt{Appliances}), temperature of the laundry room (\texttt{T3}) and temperature of the teenage room (\texttt{T8}) curves. The corresponding slopes are represented in Figure~\ref{fig:beta_elec}. We observe that all the curves take larger values at the end of the day (after 8 pm). This indicates that the values of the three parameters that influence the most the mean appliances energy consumption of the day after are the one measured at the end of the day.  
\begin{figure}
\begin{tabular}{ccc}
\texttt{Appliances}&\texttt{T3}&\texttt{T8}\\
\includegraphics[width=0.3\textwidth]{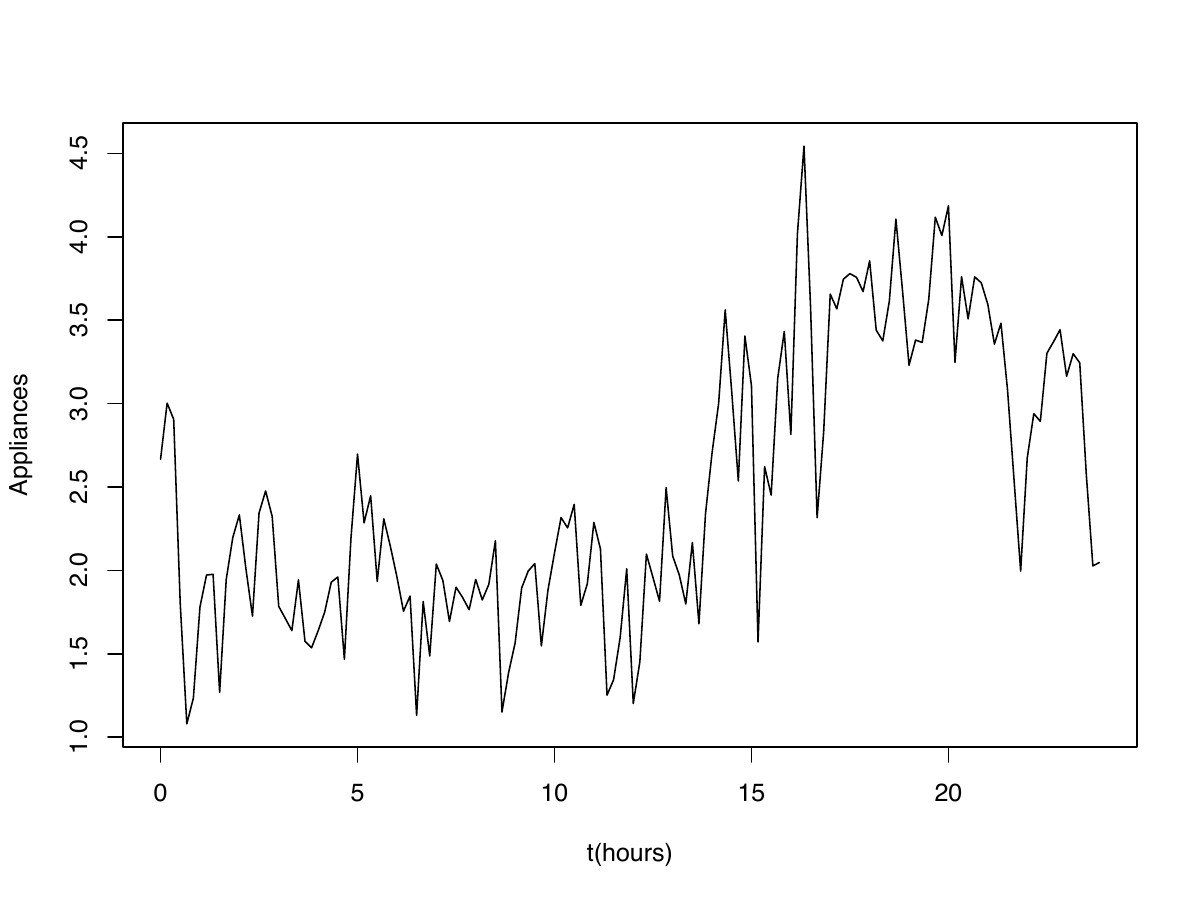}&\includegraphics[width=0.3\textwidth]{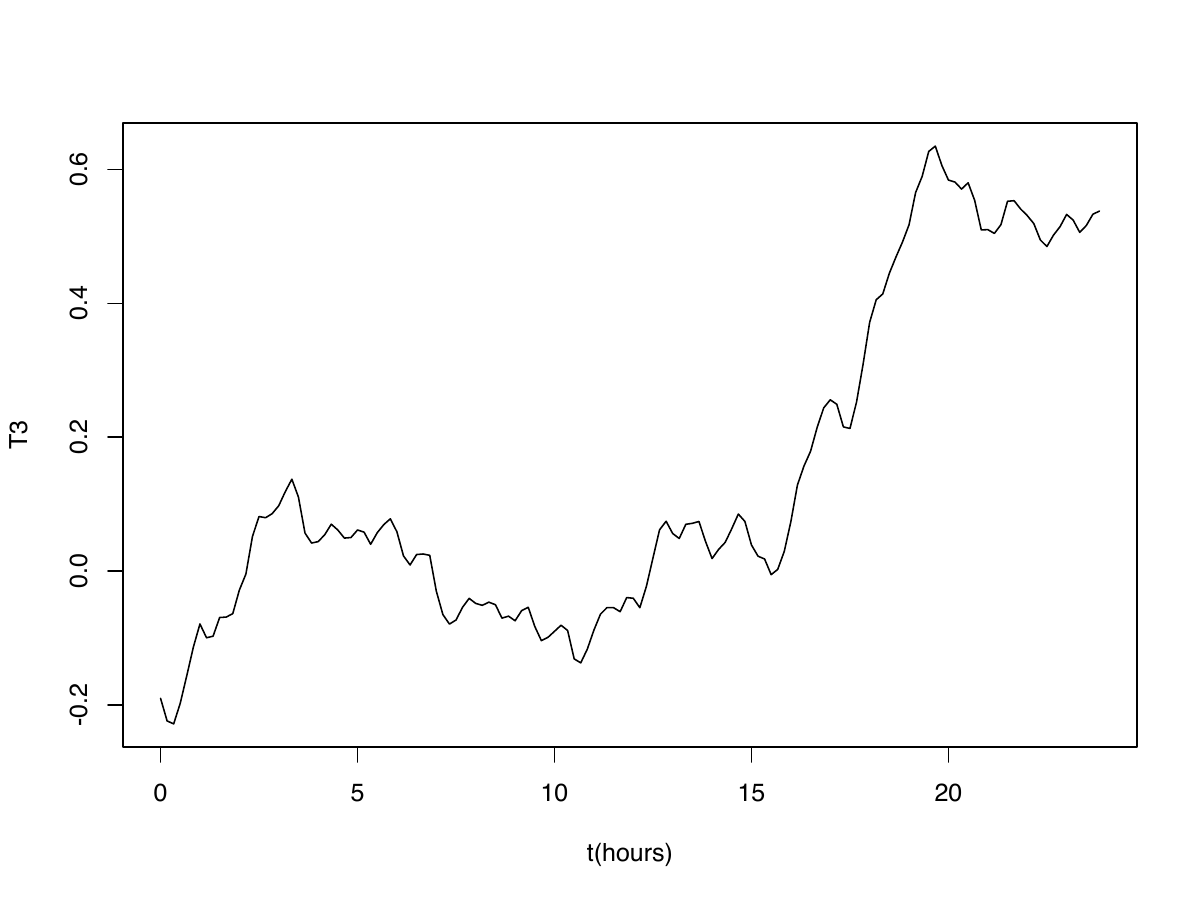}&\includegraphics[width=0.3\textwidth]{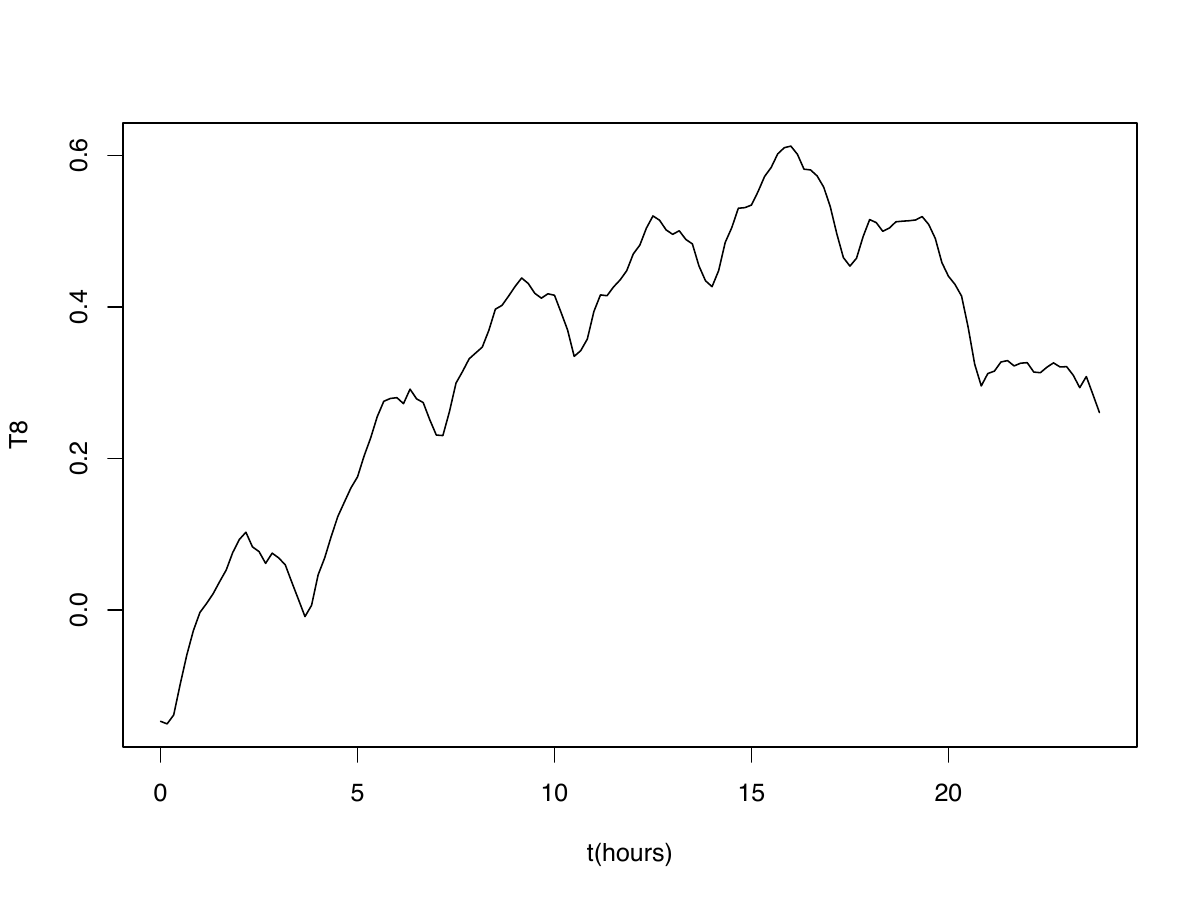}
\end{tabular}
\caption{\label{fig:beta_elec}Plot of the coefficients $\left[\widehat{\boldsymbol\beta}_{\widehat{\boldsymbol\lambda},\infty}\right]_j$ for $j\in J(\widehat{\boldsymbol\beta}_{\widehat{\boldsymbol\lambda},\infty})=\{1,7,17\}$ corresponding to the coefficients associated to the appliance energy consumption curve (\texttt{Appliances}), temperature of the laundry room (\texttt{T3}) and temperature of the teenage room (\texttt{T8}).}
\end{figure} 

%

\section*{Concluding remarks}

The objective of the paper was to study how the theoretical results obtained for Lasso and Group-Lasso penalties can be adapted when the dimension of the covariates is infinite, which is, in particular, the case of functional data. 

\paragraph{Discussion on the theoretical results and open-questions}

As in finite dimension, the main issue is to control the relationship between the empirical norm naturally associated with the least squares criterion, related to the covariance matrix of the covariates, and the norm inducing the sparsity appearing in the penalty. The main problem here is that, as we prove in Lemma~\ref{lem:REfalse}, these two norms cannot be equivalent in infinite dimension. The unprojected estimator seems to have very good performance in practice, and the solution can be computed easily. However, the rates of convergence of this estimator remains an open question. On the other hand, we prove sharp oracle inequalities for the projected estimator and we are able to define a data-based dimension selection criterion that achieves the best trade-off between the bias and the variance term.  However, the rates of convergence of this estimator has not been proven to be optimal. Intuitively, it is not, and it seems likely that an adaptive Lasso procedure is needed to obtain an optimal rate in the minimax sense. These questions, which seem complex questions to solve, are left for future work. 


We could also consider an alternative restricted eigenvalues assumption as it appears in~\citet{JRRSW19} and suppose that there exist two positive numbers $\kappa_1$ and $\kappa_2$ such that
\[
\|\boldsymbol\beta\|_n\geq\kappa_1\|\boldsymbol\beta\|-\kappa_2\|\boldsymbol\beta\|_1, \text{ for all }\boldsymbol\beta\in\mathbf H,
\]
where we denote 
\[
\|\boldsymbol\beta\|_1:=\sum_{j=1}^p\|\beta_j\|_j\text{ for }\boldsymbol\beta=(\beta_1,...,\beta_p)\in\mathbf H. 
\]
This assumption does not suffer from the curse of dimensionality as the assumption $A_{RE(s)}$ does. However, contrary to the finite-dimensional case, the control of the probability that the assumption holds in the random design case is, to our knowledge, still an open question.

\paragraph{Discussion on the numerical results}

From the simulation results, both methods seem to estimate the support of the slope coefficient $\bbeta^*$ well. However, the projected method, which gives us the most accurate theoretical results, is quite difficult to implement in practice, due to the cost of constructing the spaces $\mathbf H^{(m)}$. On the contrary, the unprojected estimator seems to give interesting results, both in the simulation study and in the application on real data. However, the theoretical results (see for instance the remark after Corollary~\ref{cor:rates}) argue for the choice of a finite value of $m$. 

\paragraph{Discussion on the linearity assumption}
The linearity assumption may be too restrictive in some contexts. A natural way to consider a nonlinear regression model is to assume that $Y=m(\mathbf X)+\varepsilon$ where $m:\mathbf H\to\mathbb R$ is an unknown regression function. However, it has been shown by \citet{mas_lower_2012} that, without additional structural assumptions on $m$, this model suffers from the curse of dimensionality which manifests itself here by a very low minimax rate of convergence, typically logarithmic (see also the recent review by \citealt{LV18} and the discussion in \citealt{Geenens11,CR16}). This is also the case for additive models
\[
Y=m_1(X^1)+\hdots+m_p(X^p)+\varepsilon,
\]
with $m_j$ unknown functions $m_j:\mathbb H_j\to\mathbb R$, which could be natural models to consider the sparsity problem. 

This is the reason why semi-parametric models have been introduced and widely studied. In this category, we can mention for example the partially linear models \citep{KXYZ16,WLZ19},
\[
Y=\langle \beta_1,X^1\rangle_1+\hdots+\langle \beta_{p_\infty},X^{p_\infty}\rangle_{p_\infty}+m_1(X^{p_\infty+1})+\hdots+m_p(X^{p})+\varepsilon,
\]
where we recall that $X^{p_\infty+1},\hdots,X^{p}$ are scalar or vector covariates and $X^1,\hdots,X^{p_\infty}$ are functional covariates. The approach developed in this paper could be directly extended to this model by considering estimators by projection of $m_j$, as in \citet{BTW07}. However, this introduces an additional bias that needs to be handled in the theoretical results and requires careful selection of the projection spaces and their dimensions. 

This model has been generalized, for example, to the case of single-index models (see \citealt{NAV21} and references cited). 
\[
Y=g_1(\langle \beta_1,X^1\rangle_1)+\hdots+g_{p_\infty}(\langle \beta_{p_\infty},X^{p_\infty}\rangle_{p_\infty})+m_1(X^{p_\infty+1})+\hdots+m_p(X^{p})+\varepsilon,
\]
where the $g_j$'s are unknown real functions. This type of model, poses theoretical questions more difficult to solve than the previous one, because the coefficients $\beta_j$ do not depend linearly on the observations.   


\paragraph{Acknowledgements}

I would like to thank Vincent Rivoirard and Ga\"elle Chagny for their helpful advices and careful reading of the manuscript. 
The research is partly supported by the french Agence Nationale de la Recherche (ANR-18-CE40-0014 projet SMILES). 

 \appendix
\section{Proofs}
\subsection{ Proof of Lemma~\ref{lem:REfalse}}
\label{subsec:proofREfalse}

\begin{proof}
Let $J\subset\{1,\hdots,p\}$ such that  $\dim(\mathbf H_J)>{\rm rk}(\widehat{\boldsymbol\Gamma}_J)$. This implies that $\dim(\rm{\ker}(\widehat{\boldsymbol\Gamma}_J))\geq 1$ and then that there exists $\boldsymbol\delta_J=(\delta_j)_{j\in J}\in\mathbf H_J\backslash\{0\}$ such that $\widehat{\boldsymbol\Gamma}_J\boldsymbol\delta_J=0$. Define now from  $\boldsymbol\delta_J$, $\boldsymbol\delta=(\delta_1,\hdots,\delta_p)\in\mathbf H$ such that $\delta_j=0$ if $j\notin J$.
 
Recall the definition of the operator 
\begin{eqnarray*}
\widehat{\boldsymbol\Gamma}_J:&\mathbf H_J&\to\mathbf H_J\\
						   & \boldsymbol\beta=(\beta_j)_{j\in J}&\mapsto \left(\frac1n\sum_{i=1}^n\sum_{j\in J}\langle \beta_j,X_i^j\rangle_j X_i^{j'}\right)_{j'\in J},
\end{eqnarray*}
and observe that
\[
\|\boldsymbol\delta\|_n^2=\langle \widehat{\boldsymbol\Gamma}\boldsymbol\delta,\boldsymbol\delta\rangle=0. 
\]
Moreover, $\boldsymbol\delta$ satisfies the constraints 
\[
0=\sum_{j\notin J}\lambda_j\|\delta_j\|_j\leq c_0\sum_{j\in J}\lambda_j\|\delta_j\|_j,
\]
for all choices of $\lambda_1,\hdots,\lambda_p$ and for all $c_0>0$ which ends the proof.
 \end{proof}
 
 \subsection{Proof of Proposition~\ref{prop:oracle}}
 \label{subsec:proof:prop:oracle}
\begin{proof}

We prove only~\eqref{eq:oracle}, Inequality~\eqref{eq:oracle_proj} follows the same lines. The proof below is largely inspired by the proof of~\citet{lounici_oracle_2011}, using the improvement of \citet{BT17} to obtain a sharp oracle inequality. First remark that some algebra gives us the following result, true for all $\boldsymbol\beta\in\mathbf H^{(m)}$,
\begin{equation}\label{eq:oracle:start}
\n{\estim-\bbeta^*}_n^2-\n{\bbeta-\bbeta^*}_n^2=\frac2n\sum_{i=1}^n\ps{\estim-\bbeta^*}{\mathbf X_i}\ps{\estim-\bbeta}{\mathbf X_i}-\n{\estim-\bbeta}_n^2
\end{equation}

We can easily verify that the function 
$$\gamma:\boldsymbol\beta\in\mathbf H\mapsto  \frac1n\sum_{i=1}^n\left(Y_i-\ps{\boldsymbol\beta}{\mathbf X_i}\right)^2+2\sum_{j=1}^p\lambda_j\n{\beta_j}_j=\gamma_1(\boldsymbol\beta)+\gamma_2(\boldsymbol\beta)$$
is a proper convex function. Hence, $\estim$ is a minimum of $\gamma$ over $\mathbf H$ if and only if 0 is a subgradient $\partial\gamma(\estim)$ of $\gamma$ at the point $\estim$. 

The function $\gamma_1:\boldsymbol\beta\mapsto  \frac1n\sum_{i=1}^n\left(Y_i-\ps{\boldsymbol\beta}{\mathbf X_i}\right)^2$ is differentiable on $\mathbf H$, with gradient,
$$\left(-\frac2n\sum_{i=1}^n(Y_i-\ps{\boldsymbol\beta}{X_i})X_i^j\right)_{1\leq j\leq p}=-\frac2n\sum_{i=1}^n(Y_i-\ps{\boldsymbol\beta}{X_i})\mathbf X_i$$
and $\gamma_2:\boldsymbol\beta\mapsto2\sum_{j=1}^p\lambda_j\n{\beta_j}_j$ is differentiable on $D:=\{\boldsymbol\beta=(\boldsymbol\beta_1,...,\boldsymbol\beta_p)\in\mathbf H,\ \forall j=1,...,p,\ \beta_j\neq0\}$ with gradient 
$$\left(2\lambda_j\frac{\beta_j}{\n{\beta_j}_j}\right)_{1\leq j\leq p}.$$
Since, for all $j=1,...,p$, the subdifferential of $\n{\cdot}_j$ at the point 0 is the closed unit ball of $\mathbf H_j$, the subdifferential of  $\gamma_2:\boldsymbol\beta\mapsto2\sum_{j=1}^p\lambda_j\n{\beta_j}_j$ at the point $\boldsymbol\beta\in D^c$, is the set
\begin{equation}\label{eq:subdiff}
\partial\gamma_2(\boldsymbol\beta)=\left\{\boldsymbol\delta=(\delta_1,...,\delta_p)\in\mathbf H, \delta_j=2\lambda_j\frac{\beta_j}{\n{\beta_j}_j} \text{ if }\beta_j\neq 0, \n{\delta_j}_j\leq 2\lambda_j\text{ if }\beta_j= 0\right\}.
\end{equation}

Hence, the subdifferential of $\gamma$ at the point $\boldsymbol\beta=(\boldsymbol\beta_1,...,\boldsymbol\beta_p)\in\mathbf H$ is the set
$$\partial\gamma(\boldsymbol\beta)=\left\{\boldsymbol\theta\in\mathbf H,\ \exists\boldsymbol\delta\in\partial\gamma_2(\boldsymbol\beta),\ \boldsymbol\theta=-\frac2n\sum_{i=1}^n(Y_i-\ps{\boldsymbol\beta}{\mathbf X_i})\mathbf X_i+\boldsymbol\delta\right\}.$$
Then, since $0\in\partial\gamma(\estim)$,  we know that there exists $\widehat{\boldsymbol\delta}=(\widehat\delta_1,\hdots,\widehat\delta_p)\in\partial\gamma_2(\estim)$ such that 
\[
 0=-\frac2n\sum_{i=1}^n(Y_i-\ps{\estim}{\mathbf X_i})\mathbf X_i+\widehat{\boldsymbol\delta}=-\frac2n\sum_{i=1}^n(\ps{\bbeta^*}{\mathbf X_i}+\varepsilon_i-\ps{\estim}{\mathbf X_i})\mathbf X_i+\widehat{\boldsymbol\delta}. 
\]
Then 
\[
\frac2n\sum_{i=1}^n\ps{\estim-\bbeta^*}{\mathbf X_i}\mathbf X_i=\frac2n\sum_{i=1}^n\varepsilon_i\mathbf X_i-\widehat{\boldsymbol\delta}
\]
which implies
\begin{equation}\label{eq:oracle:start1}
\frac2n\sum_{i=1}^n\ps{\estim-\bbeta^*}{\mathbf X_i}\ps{\estim-\bbeta}{\mathbf X_i}=\frac2n\sum_{i=1}^n\varepsilon_i\ps{\estim-\bbeta}{\mathbf X_i}+\ps{\bbeta-\estim}{\widehat{\boldsymbol\delta}}.
\end{equation}
Now remark that, denoting $[\widehat\beta_{\boldsymbol\lambda,\infty}]_j$ the $j$-th coordinate of $\estim$ we have, by definition of $\widehat{\boldsymbol\delta}$, 
\begin{equation}\label{eq:oracle:start2}
\begin{split}
\ps{\bbeta-\estim}{\widehat{\boldsymbol\delta}}&=\sum_{j=1}^p\ps{\beta_j-[\widehat\beta_{\boldsymbol\lambda,\infty}]_j}{\widehat{\delta}_j}_j=\sum_{j=1}^p\ps{\beta_j}{\widehat{\delta}_j}_j-2\lambda_j\n{[\widehat\beta_{\boldsymbol\lambda,\infty}]_j}_j\\
&\leq 2\sum_{j=1}^p\lambda_j\left(\n{\beta_j}_j-\n{[\widehat\beta_{\boldsymbol\lambda,\infty}]_j}_j\right).
\end{split}
\end{equation}
Then inserting~\eqref{eq:oracle:start1} and \eqref{eq:oracle:start2} in~\eqref{eq:oracle:start}, we get
\begin{eqnarray}
\n{\estim-\bbeta^*}_n^2-\n{\bbeta-\bbeta^*}_n^2&\leq& \frac2n\sum_{i=1}^n\varepsilon_i\ps{\bbeta-\estim}{\mathbf X_i}+2\sum_{j=1}^p\lambda_j\left(\n{\beta_j}_j-\n{[\widehat\beta_{\boldsymbol\lambda,\infty}]_j}_j\right)\nonumber\\
&&-\n{\estim-\bbeta}_n^2. \label{eq:oracle:start:fin}
\end{eqnarray}
Then the key result proven by~\citet[Lemma A.2]{BLT18} in a finite-dimensional context also holds in our infinite-dimensional context. 


We now deal with the term involving the $\varepsilon_i$'s. Remark that, writing $\widehat{\boldsymbol\beta}_{\boldsymbol\lambda,\infty}=\widehat{\boldsymbol\beta}_{\boldsymbol\lambda,\infty}^{(m)}+\widehat{\boldsymbol\beta}_{\boldsymbol\lambda,\infty}^{(\perp m)}$ where $\widehat{\boldsymbol\beta}^{(m)}_{\boldsymbol\lambda,\infty}$ denotes the orthogonal projection of $\widehat{\boldsymbol\beta}_{\boldsymbol\lambda,\infty}$ onto $\mathbf H^{(m)}$ and $\widehat{\boldsymbol\beta}^{(\perp m)}_{\boldsymbol\lambda,\infty}$ denotes the orthogonal projection of $\widehat{\boldsymbol\beta}_{\boldsymbol\lambda,\infty}$ onto $\left(\mathbf H^{(m)}\right)^{\perp}$,
\begin{eqnarray*}
\frac1n\sum_{i=1}^n\varepsilon_i\ps{\bbeta-\estim}{\mathbf X_i}&=&\ps{\bbeta-\estim}{\frac 1n\sum_{i=1}^n\varepsilon_i\mathbf X_i} \\
&=&\ps{\boldsymbol\beta-\estim^{(m)}}{\frac 1n\sum_{i=1}^n\varepsilon_i\mathbf X_i}+\ps{\widehat{\boldsymbol\beta}_{\boldsymbol\lambda,\infty}^{(\perp m)}}{-\frac 1n\sum_{i=1}^n\varepsilon_i\mathbf X_i}\\
&\leq& \sum_{j=1}^p\ps{\beta_j-[\estim^{(m)}]_j}{\frac 1n\sum_{i=1}^n\varepsilon_iX_i^j}_j+\n{\widehat{\boldsymbol\beta}_{\boldsymbol\lambda,\infty}^{(\perp m)}}\n{\frac 1n\sum_{i=1}^n\varepsilon_i\mathbf X_i}\\
&\leq& \sum_{j=1}^p\n{[\widehat\beta_{\boldsymbol\lambda,\infty}^{(m)}]_j-\beta_j}_j\n{\frac1n\sum_{i=1}^n\varepsilon_iX_i^j}_j+\n{\widehat{\boldsymbol\beta}_{\boldsymbol\lambda,\infty}^{(\perp m)}}\n{\frac 1n\sum_{i=1}^n\varepsilon_i\mathbf X_i}.
\end{eqnarray*}

Let $\mathcal A = \bigcap_{j=1}^p\mathcal A_j$, with 
\begin{equation*}
\mathcal A_j = \left\{\n{\frac1n\sum_{i=1}^n\varepsilon_iX_i^j}_j\leq\lambda_j/2\right\}.
\end{equation*}
On the set $\mathcal A$,
\begin{equation}\label{eq:oracle1b}
\frac2n\sum_{i=1}^n\varepsilon_i\ps{\bbeta-\estim}{\mathbf X_i}\leq \sum_{j=1}^p\lambda_j\n{[\widehat\beta_{\boldsymbol\lambda,\infty}^{(m)}]_j-\beta_j}_j+\n{\widehat{\boldsymbol\beta}_{\boldsymbol\lambda,\infty}^{(\perp m)}}\sqrt{\sum_{j=1}^p\lambda_j^2}.
\end{equation}
%
Since the projector $\Pi_m$ verifies $C_{supp}$, 
$$
\n{[\widehat\beta_{\boldsymbol\lambda,\infty}^{(m)}]_j}_j=\n{\pi_j\Pi_m\widehat\beta_{\boldsymbol\lambda,\infty}^{(m)}}=\n{\Pi_m\pi_j\widehat\beta_{\boldsymbol\lambda,\infty}^{(m)}}\leq\n{\pi_j\widehat\beta_{\boldsymbol\lambda,\infty}}=\n{[\widehat\beta_{\boldsymbol\lambda,\infty}]_j}_j
$$
and gathering equations~\eqref{eq:oracle:start:fin} and~\eqref{eq:oracle1b},
%
\begin{equation*}
\begin{split}
\n{\estim-\bbeta^*}_n^2&-\n{\bbeta-\bbeta^*}_n^2+\sum_{j=1}^p\lambda_j\n{[\widehat\beta_{\boldsymbol\lambda,\infty}^{(m)}]_j-\beta_j}_j\\
&\hspace{-2cm}\leq 2\sum_{j=1}^p\lambda_j\left(\n{[\widehat\beta_{\boldsymbol\lambda,\infty}^{(m)}]_j-\beta_j}_j+\n{\beta_j}_j-\n{[\widehat\beta_{\boldsymbol\lambda,\infty}]_j}_j\right)-\n{\estim-\bbeta}_n^2+\n{\widehat{\boldsymbol\beta}_{\boldsymbol\lambda,\infty}^{(\perp m)}}\sqrt{\sum_{j=1}^p\lambda_j^2}
\\
&\hspace{-2cm}\leq 2\sum_{j=1}^p\lambda_j\left(\n{[\widehat\beta_{\boldsymbol\lambda,\infty}^{(m)}]_j-\beta_j}_j+\n{\beta_j}_j-\n{[\widehat\beta_{\boldsymbol\lambda,\infty}^{(m)}]_j}_j\right)-\n{\estim-\bbeta}_n^2+\n{\widehat{\boldsymbol\beta}_{\boldsymbol\lambda,\infty}^{(\perp m)}}\sqrt{\sum_{j=1}^p\lambda_j^2}
\\
&\hspace{-2cm}\leq 4\!\sum_{j\in J(\boldsymbol\beta)}\lambda_j\n{[\widehat\beta_{\boldsymbol\lambda,\infty}^{(m)}]_j-\beta_j}_j
-\n{\estim-\bbeta}_n^2+\n{\widehat{\boldsymbol\beta}_{\boldsymbol\lambda,\infty}^{(\perp m)}}\sqrt{\sum_{j=1}^p\lambda_j^2},
\end{split}
\end{equation*}
since  $\n{\beta_j}_j-\n{[\widehat\beta_{\boldsymbol\lambda,\infty}^{(m)}]_j}_j\leq\n{\beta_j-[\widehat\beta_{\boldsymbol\lambda,\infty}^{(m)}]_j}_j$. 
Finally 
\begin{equation}\label{eq:debut_oracle}
\begin{split}
\n{\estim-\bbeta^*}_n^2-\n{\bbeta-\bbeta^*}_n^2\leq&\ 3\!\sum_{j\in J(\boldsymbol\beta)}\lambda_j\n{[\widehat\beta_{\boldsymbol\lambda,\infty}^{(m)}]_j-\beta_j}_j-\!\sum_{j\notin J(\boldsymbol\beta)}\lambda_j\n{[\widehat\beta_{\boldsymbol\lambda,\infty}^{(m)}]_j-\beta_j}_j\\
&-\n{\estim-\bbeta}_n^2+\n{\widehat{\boldsymbol\beta}_{\boldsymbol\lambda,\infty}^{(\perp m)}}\sqrt{\sum_{j=1}^p\lambda_j^2}
\end{split}
\end{equation}
%
We consider now two cases : 
\begin{enumerate}
\item $3\sum_{j\in J(\beta)}\lambda_j\n{[\widehat\beta_{\boldsymbol\lambda,\infty}^{(m)}]_j-\beta_j}_j\geq \sum_{j\notin J(\beta)}\lambda_j\n{[\widehat\beta_{\boldsymbol\lambda,\infty}^{(m)}]_j-\beta_j}_j$. 
\item $3\sum_{j\in J(\beta)}\lambda_j\n{[\widehat\beta_{\boldsymbol\lambda,\infty}^{(m)}]_j-\beta_j}_j < \sum_{j\notin J(\beta)}\lambda_j\n{[\widehat\beta_{\boldsymbol\lambda,\infty}^{(m)}]_j-\beta_j}_j$. 
\end{enumerate}

First remark that in case 2., the result is obvious. Now, in case 1., we have, 
by definition of $\tilde\kappa_n^{(m)}(s)$, 
\[
 \tilde\kappa_n^{(m)}(s)\leq \frac{\n{\boldsymbol{\widehat\beta}_{\boldsymbol\lambda,\infty}^{(m)}-\boldsymbol\beta}_n}{\sqrt{\sum_{j\in J(\boldsymbol\beta)}\n{[\widehat\beta_{\boldsymbol\lambda,\infty}^{(m)}]_j-\beta_j}_j^2}}
\]
or equivalently
$$\sqrt{\sum_{j\in J(\boldsymbol\beta)}\n{[\widehat\beta_{\boldsymbol\lambda,\infty}^{(m)}]_j-\beta_j}_j^2}\leq \frac{1}{\widetilde\kappa_n^{(m)}(s)}\n{\widehat{\boldsymbol\beta}_{\boldsymbol\lambda,\infty}^{(m)}-\boldsymbol\beta}_n.$$

Then, using twice the fact that, for all $x,y\in\mathbb R$, $3xy\leq  x^2+(9/4)y^2$, Equation~\eqref{eq:debut_oracle} becomes,

\begin{equation*}
\begin{split}
\n{\estim-\bbeta^*}_n^2&-\n{\bbeta-\bbeta^*}_n^2 \\
&\leq 3\sqrt{\sum_{j\in J(\boldsymbol\beta)}\lambda_j^2}\sqrt{\sum_{j\in J(\boldsymbol\beta)}\n{[\widehat\beta_{\boldsymbol\lambda,\infty}^{(m)}]_j-\beta_j}_j^2}-\n{\estim-\bbeta}_n^2+\n{\widehat{\boldsymbol\beta}_{\boldsymbol\lambda,\infty}^{(\perp m)}}\sqrt{\sum_{j=1}^p\lambda_j^2},\\
&\leq\frac{3}{\widetilde\kappa_n^{(m)}(s)}\sqrt{\sum_{j\in J(\boldsymbol\beta)}\lambda_j^2}\n{\widehat{\boldsymbol\beta}_{\boldsymbol\lambda,\infty}^{(m)}-\boldsymbol\beta}_n-\n{\estim-\bbeta}_n^2+\n{\widehat{\boldsymbol\beta}_{\boldsymbol\lambda,\infty}^{(\perp m)}}\sqrt{\sum_{j=1}^p\lambda_j^2},\\
&\leq\frac{3}{\widetilde\kappa_n^{(m)}(s)}\sqrt{\sum_{j\in J(\boldsymbol\beta)}\lambda_j^2}\left(\n{\widehat{\boldsymbol\beta}_{\boldsymbol\lambda,\infty}-\boldsymbol\beta^*}_n+\n{\widehat{\boldsymbol\beta}_{\boldsymbol\lambda,\infty}^{(\perp m)}}_n\right)\\
&\qquad-\n{\estim-\bbeta}_n^2+\n{\widehat{\boldsymbol\beta}_{\boldsymbol\lambda,\infty}^{(\perp m)}}\sqrt{\sum_{j=1}^p\lambda_j^2}\\
&\leq \n{\widehat{\boldsymbol\beta}_{\boldsymbol\lambda,\infty}-\boldsymbol\beta^*}_n^2+\frac{9}{4(\widetilde\kappa_n^{(m)}(s))^2}\sum_{j\in J(\boldsymbol\beta)}\lambda_j^2-\n{\estim-\bbeta}_n^2+R_{n,m}\\\end{split},
\end{equation*}
where we recall that 
\[
R_{n,m}=\left(\n{\widehat{\boldsymbol\beta}_{\boldsymbol\lambda,\infty}^{(\perp m)}}+\frac{3}{\widetilde\kappa_n^{(m)}(s)}\n{\widehat{\boldsymbol\beta}_{\boldsymbol\lambda,\infty}^{(\perp m)}}_n\right)\sqrt{\sum_{j=1}^p\lambda_j^2},
\]
which implies the expected result.

We turn now to the upper-bound on the probability of the complement of the event $\mathcal A$. 
Conditionally to $\mathbf X_1,\hdots,\mathbf X_n$, since $\{\varepsilon_i\}_{1\leq i\leq n}\sim_{i.i.d}\mathcal N(0,\sigma^2)$, the variable $\frac1n\sum_{i=1}^n\varepsilon_iX_i^j$ is a Gaussian random variable taking values in the Hilbert (hence Banach) space $\mathbb H_j$. Therefore, from Proposition~\ref{prop:LT91}, we know that, denoting $\mathbb P_{\mathbf X}(\cdot)=\mathbb P(\cdot|\mathbf X_1,\hdots,\mathbf X_n)$ and $\mathbb E_{\mathbf X}[\cdot]=\mathbb E[\cdot|\mathbf X_1,\hdots,\mathbf X_n]$, 
\begin{equation*}
\mathbb P_{\mathbf X}(\mathcal A_j^c)\leq 4\exp\left(-\frac{\lambda_j^2}{32\mathbb E_{\mathbf X}\left[\n{\frac1n\sum_{i=1}^n\varepsilon_iX_i^j}_j^2\right]}\right)=\exp\left(-\frac{nr_n^2}{32\sigma^2}\right), 
\end{equation*}
since $\lambda_j^2=r_n^2 \frac1n\sum_{i=1}^n\n{X_i^j}_j^2$ and
$$\mathbb E_{\mathbf X}\left[\n{\frac1n\sum_{i=1}^n\varepsilon_iX_i^j}_j^2\right]=\frac1{n^2}\sum_{i_1,i_2=1}^n\mathbb E_{\mathbf X}\left[\varepsilon_{i_1}\varepsilon_{i_2}\ps{X_{i_1}^j}{X_{i_2}^2}_j\right]=\frac{\sigma^2}{n}\frac1n\sum_{i=1}^n\n{X_i^j}_j^2.$$


This implies that 
$$\mathbb P(\mathcal A^c)\leq p\exp\left(-\frac{nr_n^2}{32\sigma^2}\right)\leq p^{1-q},$$
as soon as $r_n\geq 4\sqrt2\sigma\sqrt{q\ln(p)/n}$.

\end{proof}
\subsection{Proof of Theorem~\ref{thm:oracle}}
\label{subsec:proof:thm:oracle}
\begin{proof}
By definition of $\widehat m$, we know that, for all $m=1,\hdots,N_n$,
\[
\frac1n\sum_{i=1}^n\left(Y_i-\langle\widehat{\boldsymbol\beta}_{\boldsymbol\lambda,\widehat m},\mathbf X_i\rangle\right)^2+\kappa\sigma^2\frac{\widehat m}n\log(n)\leq \frac1n\sum_{i=1}^n\left(Y_i-\langle\widehat{\boldsymbol\beta}_{\boldsymbol\lambda,m},\mathbf X_i\rangle\right)^2+\kappa\sigma^2\frac{m}n\log(n). 
\]
Now, we decompose the quantity, for all $\bbeta\in\mathbf H$,
\begin{eqnarray*}
\frac1n\sum_{i=1}^n\left(Y_i-\langle\boldsymbol\beta,\mathbf X_i\rangle\right)^2&=&\frac1n\sum_{i=1}^n\left(\langle\boldsymbol\beta^*-\boldsymbol\beta,\mathbf X_i\rangle+\varepsilon_i\right)^2\nonumber\\
&=&\left\|\boldsymbol\beta^*-\boldsymbol\beta\right\|_n^2+\frac2n\sum_{i=1}^n\varepsilon_i\langle\boldsymbol\beta^*-\boldsymbol\beta,\mathbf X_i\rangle+\frac1n\sum_{i=1}^n\varepsilon_i^2.
\end{eqnarray*}
and we obtain 
\begin{eqnarray}
\left\|\widehat{\boldsymbol\beta}_{\lambda,\widehat m}-\boldsymbol\beta^*\right\|_n^2&\leq&\left\|\widehat{\boldsymbol\beta}_{\lambda,m}-\bbeta^*\right\|_n^2+\frac2n\sum_{i=1}^n\varepsilon_i\langle\widehat{\boldsymbol\beta}_{\boldsymbol\lambda,\widehat m}-\widehat{\boldsymbol\beta}_{\lambda,m},\mathbf X_i\rangle\nonumber\\
&&+\kappa\sigma^2\frac{m}{n}\log(n)-\kappa\sigma^2\frac{\widehat m}{n}\log(n)\label{eq:oracle1}. 
\end{eqnarray}

%

Let $1/2>\eta>0$, since $2xy\leq \eta x^2+\eta^{-1} y^2$ for all $x,y\in\mathbb R$, 
\[
\frac2n\sum_{i=1}^n\varepsilon_i\langle\widehat{\boldsymbol\beta}_{\boldsymbol\lambda,\widehat m}-\widehat{\boldsymbol\beta}_{\boldsymbol\lambda,m},\mathbf X_i\rangle\leq \eta\left\|\widehat{\boldsymbol\beta}_{\boldsymbol\lambda,\widehat m}-\widehat{\boldsymbol\beta}_{\boldsymbol\lambda,m}\right\|_n^2+\eta^{-1}\nu_n^2\left(\frac{\widehat{\boldsymbol\beta}_{\boldsymbol\lambda,\widehat m}-\widehat{\boldsymbol\beta}_{\boldsymbol\lambda,m}}{\left\|\widehat{\boldsymbol\beta}_{\boldsymbol\lambda,\widehat m}-\widehat{\boldsymbol\beta}_{\boldsymbol\lambda,m}\right\|_n}\right),
\]
where $\nu_n^2(\cdot):=\frac1n\sum_{i=1}^n\varepsilon_i\langle \cdot,X_i\rangle$.
Now, we define the set :
\begin{equation}\label{eq:defBm}
\mathcal B_m:=\bigcap_{m'=1}^{N_n}\left\{\sup_{f\in\mathbf H^{(\max\{m,m'\})}, \|f\|_n=1}\nu_n^2(f)< \frac\kappa{2\eta^{-1}}\log(n)\sigma^2\frac{\max\{m,m'\}}{n}\right\}.
\end{equation}
On the set $\mathcal B_m$, since $\widehat{\boldsymbol\beta}_{\boldsymbol\lambda,\widehat m}-\widehat{\boldsymbol\beta}_{\boldsymbol\lambda,m}\in\mathbf H^{(\max\{m,\widehat m\})}$,
\begin{eqnarray}
\frac2n\sum_{i=1}^n\varepsilon_i\langle\widehat{\boldsymbol\beta}_{\boldsymbol\lambda,\widehat m}-\widehat{\boldsymbol\beta}_{\boldsymbol\lambda,m},\mathbf X_i\rangle&\leq& \eta\left\|\widehat{\boldsymbol\beta}_{\boldsymbol\lambda,\widehat m}-\widehat{\boldsymbol\beta}_{\boldsymbol\lambda,m}\right\|_n^2+\eta^{-1}\sup_{f\in\mathbf H^{(\max\{m,\widehat m\})},\|f\|_n^2=1}\nu_n^2(f)\nonumber\\
&\hspace{-4cm}\leq& \hspace{-2cm}2\eta\left\|\widehat{\boldsymbol\beta}_{\boldsymbol\lambda,\widehat m}-\boldsymbol\beta^*\right\|_n^2+ 2\eta\left\|\widehat{\boldsymbol\beta}_{\boldsymbol\lambda,m}-\boldsymbol\beta^*\right\|_n^2+\frac{\kappa}2\log(n)\sigma^2\frac{\max\{m,\widehat m\}}n.\label{eq:oracle3}
\end{eqnarray}

Gathering equations~\eqref{eq:oracle1} and~\eqref{eq:oracle3}, we get, on the set $\mathcal A\cap\mathcal B_m$,
\begin{eqnarray*}
(1-2\eta)\left\|\widehat{\boldsymbol\beta}_{\boldsymbol\lambda,\widehat m}-\boldsymbol\beta^*\right\|_n^2&\leq&(1+ 2\eta)\left\|\widehat{\boldsymbol\beta}_{\boldsymbol\lambda,m}-\boldsymbol\beta^*\right\|_n^2\\
&&+\frac\kappa2\log(n)\sigma^2\frac{\max\{m,\widehat m\}}{n}+\kappa\log(n)\sigma^2\frac{m}{n}-\kappa\log(n)\sigma^2\frac{\widehat m}{n}.\\
&\leq &(1+ 2\eta)\left\|\widehat{\boldsymbol\beta}_{\boldsymbol\lambda,m}-\boldsymbol\beta^*\right\|_n^2+2\kappa\log(n)\sigma^2\frac{m}{n}.
\end{eqnarray*}

and the quantity $\left\|\widehat{\boldsymbol\beta}_{\boldsymbol\lambda,m}-\boldsymbol\beta^*\right\|_n^2$ is upper-bounded in Proposition~\ref{prop:oracle}. We obtain the expected result with $\tilde\eta=(1+2\eta)/(1-2\eta)-1>0$. 

To conclude, since it has already been proven in Proposition~\ref{prop:oracle} that $\mathbb P(\mathcal A^c)\leq p^{1-q}$, it remains to prove that there exists a constant $C_{MS}>0$ such that
\[
\mathbb P\left(\cup_{m=1}^m\mathcal B_m^c\right)\leq \frac{C_{MS}}n. 
\]
We have
\[
\mathbb P\left(\cup_{m=1}^m\mathcal B_m^c\right)\leq\sum_{m=1}^{N_n}\sum_{m'=1}^{N_n}\mathbb P\left(\sup_{f\in\mathbf H^{(\max\{m,m'\})},\|f\|_n=1}\nu_n^2(f)\geq\frac\kappa{2\eta^{-1}}\log(n)\sigma^2\frac{\max\{m,m'\}}{n}\right). 
\]
We apply~Lemma~\ref{lem:control_nun_empir} with $t=\left(\frac\kappa{2\eta^{-1}}\log(n)-1\right)\sigma^2\frac{\max\{m,m'\}}{n}\leq\frac\kappa6\log(n)\sigma^2\frac{\max\{m,m'\}}{n}$ and obtain
\begin{align*}
\mathbb P\left(\sup_{f\in\mathbf H^{(\max\{m,m'\})},\|f\|_n=1}\nu_n^2(f)\geq\frac\kappa6\log(n)\sigma^2\frac{\max\{m,m'\}}{n}\right)&\\
&\hspace{-8cm}\leq\exp\left(-2\kappa\log(n)\max\{m,m'\}\min\left\{\frac{\kappa\log(n)}{6912},\frac1{1536}\right\}\right). 
\end{align*}
Suppose that $\kappa\log(n)>6912/1536=9/2$ (the other case could be treated similarly), we have, since $1\leq m\leq N_n\leq n$, and by bounding the second sum by an integral
\[
\begin{split}
\sum_{m=1}^{N_n}\sum_{m'=1}^{N_n}\mathbb P\left(\sup_{f\in\mathbf H^{(\max\{m,m'\})},\|f\|_n=1}\nu_n^2(f)\geq\frac\kappa6\log(n)\sigma^2\frac{\max\{m,m'\}}{n}\right)&
\\&\hspace{-9cm}\leq \sum_{m=1}^{N_n}\left(\sum_{m'=1}^m\exp\left(-\frac{\kappa\log(n)m}{768}\right)+\sum_{m'=m+1}^{N_n}\exp\left(-\frac{\kappa\log(n)m'}{768}\right)\right)
\\&\hspace{-9cm}\leq N_n\ n\exp\left(-\frac{\kappa\log(n)}{768}\right)+\frac{768N_n}{\kappa\log(n)}\exp\left(-\frac{\kappa\log(n)}{768}\right). 
\end{split}
\]
Now choosing $\kappa>2304$ we know that there exists a universal constant $C_{MS}>0$ such that
\[
\mathbb P\left(\cup_{m=1}^{N_n}\mathcal B_m^c\right)\leq C_{MS}/n.
\]

Note that the minimal value 2304 for $\kappa$ is purely theoretical and does not correspond to a value of $\kappa$ which can reasonably be used in practice. 
 \end{proof}

\subsection{Proof of Theorem~\ref{thm:oracle_pred}}
\label{proof:thm:oracle_pred}
\begin{proof}
In the proof, the notation $C,C',C''>0$ denotes quantities which may vary from line to line but are always independent of $n$ or $m$. 

Let $\mathcal A$ the set defined in the statement of Proposition~\ref{prop:oracle} and $\mathcal B=\bigcap_{m=1}^{N_n}\mathcal B_m$ the set appearing in the proof of Theorem~\ref{thm:oracle} (see Equation~\eqref{eq:defBm} p.~\pageref{eq:defBm}). Following the proof of Theorem~\ref{thm:oracle}, we know that, on the set $\mathcal A\cap\mathcal B$, for all $m=1,\hdots,N_n,M_n$, for all $\boldsymbol\beta\in\mathbf H^{(m)}$ such that $J(\boldsymbol\beta)\leq s$, for all $\tilde\eta>0$,
\begin{equation}\label{eq:majo_normen}
\left\|\widehat{\boldsymbol\beta}_{\boldsymbol\lambda,\widehat m}-\boldsymbol\beta^*\right\|_n^2\leq (1+\tilde\eta)\|\boldsymbol\beta-\boldsymbol\beta^*\|_n^2+\frac{9}{4\left(\tilde\kappa_n^{(m)}(s)\right)^2}\sum_{j\in J(\boldsymbol\beta)}\lambda_j^2+2\kappa\log(n)\sigma^2\frac{m}n,
\end{equation}
since $C(\tilde\eta)\leq 2$.
We also now that 
\[
\mathbb P(\mathcal A^c)\leq p^{1-q}\text{ and }\mathbb P(\mathcal B^c)\leq \frac{C_{MS}}n.
\]

We define now the set $\mathcal C=\mathcal C_1\cap\mathcal C_2$ where 
\begin{eqnarray*}
\mathcal C_1&:=&\left\{\sup_{\boldsymbol\beta\in\mathbb H^{(N_n)}\backslash\{0\}}\left|\frac{\|\boldsymbol\beta\|_{n}^2}{\|\boldsymbol\beta\|_{\boldsymbol\Gamma}^2}-1\right|\leq\frac12\right\}\\
\mathcal C_2&:=&\left\{\sup_{\boldsymbol\beta\in\mathbb H^{(N_n)}\backslash\{0\}}\left|\frac{\|\boldsymbol\beta\|_{n}^2-\|\boldsymbol\beta\|_{\boldsymbol\Gamma}^2}{\|\boldsymbol\beta\|^2}\right|\leq\frac12\right\}.
\end{eqnarray*}
and prove that  \begin{equation}\label{eq:majo_Ccomp}
 \mathbb P(\mathcal C^c)\leq 2n^2\exp(-c_{\max}n), \quad c_{\max}=\max\left\{(4b {\rm tr}(\boldsymbol\Gamma)(4 {\rm tr}(\boldsymbol\Gamma)+1/2))^{-1};r_{\boldsymbol\Gamma}^2/16b\right\},
 \end{equation}
 where $r_{\boldsymbol\Gamma}>0$ depends only on $\boldsymbol\Gamma$ and is defined below.

We apply Proposition~\ref{prop:norm_equiv} to bound
\begin{eqnarray}
\mathbb P(\mathcal C_1^c)&=& \mathbb P\left(\sup_{\boldsymbol\beta\in\mathbb H^{(N_n)}\backslash\{0\}}\left|\frac{\|\boldsymbol\beta\|_{n}^2-\|\boldsymbol\beta\|_{\boldsymbol\Gamma}^2}{\|\boldsymbol\beta\|_{\boldsymbol\Gamma}^2}\right|>\frac12\right)\nonumber\\
&\leq& 2N_n^2\exp\left(-\frac{n\rho^2(\boldsymbol\Gamma_{|N_n})}{4b\sum_{j=1}^{N_n}\tilde v_j\left(4\sum_{j=1}^{N_n}\tilde v_j+\frac{\rho(\boldsymbol\Gamma_{|N_n})}2\right)}\right)\nonumber\\
&\leq& 2N_n^2\exp\left(-\frac{n\rho^2(\boldsymbol\Gamma_{|N_n})}{16b{\rm tr}^2(\boldsymbol\Gamma_{|N_n})}\right).\label{eq:majoCcomp2}
\end{eqnarray}
We remark that 
\[
\rho(\boldsymbol\Gamma_{|m})=\sup_{f\in\mathbb H^{(N_n)}\backslash\{0\}}\frac{\|\boldsymbol\Gamma_{|N_n}f\|}{\|f\|}\underset{m \to +\infty}{\longrightarrow}\rho(\boldsymbol\Gamma)  \text{ and }{\rm tr}(\boldsymbol\Gamma_{|m})=\sum_{k=1}^{N_n}\langle\boldsymbol\Gamma\boldsymbol\varphi^{(k)},\boldsymbol\varphi^{(k)}\rangle\underset{m \to +\infty}{\longrightarrow}{\rm tr}(\boldsymbol\Gamma),
\]
then 
\[
\frac{\rho(\boldsymbol\Gamma_{|m})}{{\rm tr}(\boldsymbol\Gamma_{|m})}\underset{m \to +\infty}{\longrightarrow}\frac{\rho(\boldsymbol\Gamma)}{{\rm tr}(\boldsymbol\Gamma)}>0,
\]
and there exists a constant $r_{\boldsymbol\Gamma}>0$ such that, for all $m$,
\[
\frac{\rho(\boldsymbol\Gamma_{|m})}{{\rm tr}(\boldsymbol\Gamma_{|m})}\geq r_{\boldsymbol\Gamma}.
\]
Then from Equation~\eqref{eq:majoCcomp2}, and the fact that $N_n\leq n$, we get that 
 \[
 \mathbb P(\mathcal C_1^c)\leq 4n^2\exp\left(-\frac{r_{\boldsymbol\Gamma}n}{16b}\right).
 \]

We turn now to the upper-bound on the probability of $\mathcal C_2^c$ and apply again Proposition~\ref{prop:norm_equiv} 
\begin{eqnarray*}
\mathbb P\left(\sup_{\boldsymbol\beta\in\mathbb H^{(N_n)}\backslash\{0\}}\left|\frac{\|\boldsymbol\beta\|_{n}^2-\|\boldsymbol\beta\|_{\boldsymbol\Gamma}^2}{\|\boldsymbol\beta\|^2}\right|>\frac12\right)&\leq &2N_n^2\exp\left(-\frac{n/4}{b\sum_{j=1}^{N_n}\tilde v_j\left(4\sum_{j=1}^{N_n}\tilde v_j+\frac12\right)}\right)\\
&\leq&2N_n^2\exp\left(-\frac{n}{4b{\rm tr}(\boldsymbol\Gamma)\left(4{\rm tr}(\boldsymbol\Gamma)+\frac12\right)}\right)
\end{eqnarray*}
and the upper-bound~\eqref{eq:majo_Ccomp} comes from the fact that
 \[
 \sum_{k=1}^{N_n}\tilde v_k=\sum_{k=1}^{N_n}\mathbb E[\langle\boldsymbol\varphi^{(k)},\mathbf X_1\rangle^2]= \sum_{k=1}^{N_n}\langle\boldsymbol\Gamma\boldsymbol\varphi^{(k)},\boldsymbol\varphi^{(k)}\rangle={\rm tr}(\boldsymbol\Gamma_{|N_n})\leq {\rm tr}(\boldsymbol\Gamma). 
 \]

 On the set $\mathcal A\cap\mathcal B\cap\mathcal C$, we have then, for all $m=1,\hdots,N_n$, 
  \begin{eqnarray*}
  \left\|\widehat{\boldsymbol\beta}_{\boldsymbol\lambda,\widehat m}-\boldsymbol\beta^*\right\|_{\boldsymbol\Gamma}^2&\leq& 2\left\|\widehat{\boldsymbol\beta}_{\boldsymbol\lambda,\widehat m}-\boldsymbol\beta^{(*,m)}\right\|_{\boldsymbol\Gamma}^2+ 2\left\|\boldsymbol\beta^{(*,m)}-\boldsymbol\beta^{*}\right\|_{\boldsymbol\Gamma}^2\\
&\leq& 4\left\|\widehat{\boldsymbol\beta}_{\boldsymbol\lambda,\widehat m}-\boldsymbol\beta^{(*,m)}\right\|_n^2+ 2\left\|\boldsymbol\beta^{(*,m)}-\boldsymbol\beta^{*}\right\|_{\boldsymbol\Gamma}^2
  \end{eqnarray*}
  From~\eqref{eq:majo_normen}, we get 
  \begin{eqnarray*}
\left\|\widehat{\boldsymbol\beta}_{\boldsymbol\lambda,\widehat m}-\boldsymbol\beta^*\right\|_{\boldsymbol\Gamma}^2&\leq &C\left(\|\boldsymbol\beta-\boldsymbol\beta^*\|_{n}^2+\frac1{\left(\tilde\kappa_n^{(m)}(s)\right)^2}\sum_{j\in J(\boldsymbol\beta)}\lambda_j^2+\kappa\frac{\log n}n\sigma^2m\right.\\
&&\quad\left.+\left\|\boldsymbol\beta^*-\boldsymbol\beta^{(*,m)}\right\|_{\boldsymbol\Gamma}^2\right),
  \end{eqnarray*}
  for a constant $C>0$. 
Now remark that, on the set $\mathcal C_1$
\[
\n{\bbeta^{(*,m)}-\bbeta}_n^2\leq \frac32\n{\bbeta^{(*,m)}-\bbeta}_\cov^2
\]
and, for all $J\subset \{1,\hdots,p\}$, for all $\boldsymbol\delta\in\mathbf H^{(m)}$,  such that $\sum_{j\in J}\|\delta_j\|_j^2\neq 0$,
 \[
 \frac12\frac{\|\boldsymbol\delta\|_{\boldsymbol\Gamma}^2}{\sqrt{\sum_{j\in J}\|\delta_j\|_j^2}}\leq\frac{\|\boldsymbol\delta\|_n^2}{\sqrt{\sum_{j\in J}\|\delta_j\|_j^2}}\leq  \frac32\frac{\|\boldsymbol\delta\|_{\boldsymbol\Gamma}^2}{\sqrt{\sum_{j\in J}\|\delta_j\|_j^2}},
 \]
 which implies 
  \[
 \frac12\kappa_n^{(m)}(s)\leq\tilde\kappa_n^{(m)}(s)\leq  \frac32\kappa_n^{(m)}(s). 
 \]
 We obtain 
   \begin{eqnarray}
\left\|\widehat{\boldsymbol\beta}_{\boldsymbol\lambda,\widehat m}-\boldsymbol\beta^*\right\|_{\Gamma}^2&\leq &C'\min_{m=1,\hdots,N_n}\min_{\boldsymbol\beta\in\mathbf H^{(m)},|J(\boldsymbol\beta)|\leq s}\left\{\|\boldsymbol\beta-\boldsymbol\beta^*\|_{\boldsymbol\Gamma}^2+\frac1{\left(\kappa_n^{(m)}(s)\right)^2}\sum_{j\in J(\boldsymbol\beta)}\lambda_j^2\right.\nonumber\\
&&\left.+\kappa\frac{\log n}n\sigma^2m+\left\|\boldsymbol\beta^*-\boldsymbol\beta^{(*,m)}\right\|_{\boldsymbol\Gamma}^2+\left\|\boldsymbol\beta^*-\boldsymbol\beta^{(*,m)}\right\|_n^2\right\}.\label{eq:oracle_pred1}
  \end{eqnarray}
  Now, let $\zeta_{n,m}=\frac{\log(n)}{\sqrt{n}}\|\boldsymbol\beta^{(*,\perp m)}\|$ and
  \[
  \mathcal D:=\bigcap_{m=1}^{N_n}\left\{\|\boldsymbol\beta^{(*,\perp m)}\|_n^2\leq \|\boldsymbol\beta^{(*,\perp m)}\|_{\boldsymbol\Gamma}^2+\zeta_{n,m}\right\},
  \]
  where we recall the notation $\boldsymbol\beta^{(*,\perp m)}=\boldsymbol\beta^*-\boldsymbol\beta^{(*,m)}$.   
  We give now an upper-bound on $\mathbb P(\mathcal D^c)$ which completes the proof. Remark that 
  \[
  \|\boldsymbol\beta^{(*,\perp m)}\|_n^2=\frac1n\sum_{i=1}^n\langle \boldsymbol\beta^{(*,\perp m)}, \mathbf X_i\rangle^{2},
  \]
  and that, for all $i=1,\hdots,n$, 
  \[
  \mathbb E\left[ \langle \beta^{(*,\perp m)}, \mathbf X_i\rangle^{2}\right]=\|\boldsymbol\beta^{(*,\perp m)}\|_{\boldsymbol\Gamma}^2,
  \]
we can rewrite 
 \[
  \mathcal D:=\bigcap_{m=1}^{N_n}\left\{\frac1n\sum_{i=1}^n\left(\langle \boldsymbol\beta^{(*,\perp m)}, \mathbf X_i\rangle^{2}- \mathbb E\left[ \langle \boldsymbol\beta^{(*,\perp m)}, \mathbf X_1\rangle^{2}\right]\right)\leq \zeta_{n,m}\right\}. 
  \]
  Hence 
  \[
  \mathbb P(\mathcal D^c)\leq\sum_{m=1}^{N_n}\mathbb P\left(\frac1n\sum_{i=1}^n\left(\langle\boldsymbol \beta^{(*,\perp m)}, \mathbf X_i\rangle^{2}- \mathbb E\left[ \langle\boldsymbol \beta^{(*,\perp m)}, \mathbf X_1\rangle^{2}\right]\right)> \zeta_{n,m}\right). 
  \]
  We upper-bound the quantities above using Bernstein's inequality (Proposition~\ref{lem:Bernstein}, p.~\pageref{lem:Bernstein}). 
  
  We have, for $\ell\geq 2$,
  \[
  \mathbb E\left[\langle\boldsymbol \beta^{(*,\perp m)}, \mathbf X_i\rangle^{2\ell}\rangle\right]\leq \|\boldsymbol \beta^{(*,\perp m)}\|^{2\ell}\mathbb E\left[\|\mathbf X_i\|^{2\ell}\right]\leq \frac{\ell !}2\|\boldsymbol \beta^{(*,\perp m)}\|^{2}v_{Mom}^2\left(\|\boldsymbol \beta^{(*,\perp m)}\|c_{Mom}\right)^{\ell-2},
  \]
  applying Bernstein inequality, we get 
  \[
  \mathbb P(\mathcal D_n^c)\leq \sum_{m=1}^{N_n}\exp\left(-\frac{n\zeta_{n,m}^2/2}{\|\boldsymbol\beta^{(*,\perp m)}\|^2v_{Mom}^2+\zeta_{n,m}\|\boldsymbol\beta^{(*,\perp m)}\|c_{Mom}}\right). 
  \]
  
  We get, since $N_n\leq n$,
  \[
  \mathbb P(\mathcal D_n^c)\leq N_n\exp\left(-\frac{\log^2(n)}{2v_{Mom}^2+\frac{\log(n)}{\sqrt{n}}c_{Mom}}\right)\leq \frac {C_{Mom}}n,
  \]
  with $C_{Mom}>0$ depending only on $v_{Mom}$ and $c_{Mom}$.
  
  Then, on $\mathcal A\cap\mathcal B\cap\mathcal C\cap\mathcal D$, \eqref{eq:oracle_pred1} becomes, for all $m=1,\hdots,N_n$,
   \begin{eqnarray*}
\left\|\widehat{\boldsymbol\beta}_{\boldsymbol\lambda,\widehat m}-\boldsymbol\beta^*\right\|_{\Gamma}^2&\leq &C\left(\|\boldsymbol\beta-\boldsymbol\beta^*\|_{\boldsymbol\Gamma}^2+\frac1{\left(\kappa_n^{(m)}(s)\right)^2}\sum_{j\in J(\boldsymbol\beta)}\lambda_j^2+\kappa\frac{\log n}n\sigma^2m\right.\\
&&\quad\left.+\left\|\boldsymbol\beta^{(*,\perp m)}\right\|_{\boldsymbol\Gamma}^2+\zeta_{n,m}\right). 
  \end{eqnarray*}
  We then upper-bound $\zeta_{n,m}$ as follows
  \[
  \zeta_{n,m}\leq \left(\kappa_n^{(m)}(s)\right)^2\left\|\boldsymbol\beta^{(*,\perp m)}\right\|^2+\frac{\log^2(n)}{n\left(\kappa_n^{(m)}(s)\right)^2}.
  \]
\end{proof}

\section{Control of empirical processes}
\begin{lemma}\label{lem:control_nun_empir}
For all $t>0$, for all $m$,
\begin{align*}
\mathbb P_{\mathbf X}\left(\sup_{\mathbf f\in\mathbf H^{(m)}, \|f\|_n=1}\left(\frac1n\sum_{i=1}^n\varepsilon_i\langle\mathbf f,\mathbf X_i\rangle\right)^2\geq \sigma^2\frac{m}{n}+t\right)\\
&\hspace{-6cm}\leq \exp\left(-\min\left\{\frac{n^2t^2}{1536\sigma^4m};\frac{nt}{512\sigma^2}\right\}\right),
\end{align*}
where $\mathbb P_{\mathbf X}(\cdot):=\mathbb P(\cdot|\mathbf X_1,\hdots,\mathbf X_n)$ is the conditional probability given $\mathbf X_1,\hdots,\mathbf X_n$.
\end{lemma}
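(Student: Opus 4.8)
The plan is to work conditionally on $\mathbf X_1,\dots,\mathbf X_n$ and to recognise the supremum as the squared Euclidean norm of a Gaussian vector --- hence, up to the factor $\sigma^2/n$, a chi-square variable with at most $m$ degrees of freedom --- after which a classical chi-square deviation inequality finishes the proof.

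First I would set $\nu_n(\mathbf f):=\frac1n\sum_{i=1}^n\varepsilon_i\ps{\mathbf f}{\mathbf X_i}$ and expand a generic $\mathbf f\in\mathbf H^{(m)}$ as $\mathbf f=\sum_{k=1}^m a_k\boldsymbol\varphi^{(k)}$. Writing $\mathbf a=(a_1,\dots,a_m)^{\top}$ and $\mathbf z=(z_1,\dots,z_m)^{\top}$ with $z_k:=\frac1n\sum_{i=1}^n\varepsilon_i\ps{\boldsymbol\varphi^{(k)}}{\mathbf X_i}$, one has $\n{\mathbf f}_n^2=\mathbf a^{\top}\widehat{\boldsymbol\Gamma}_{|m}\mathbf a$ and $\nu_n(\mathbf f)=\mathbf a^{\top}\mathbf z$, so the quantity to bound is $\sup\{(\mathbf a^{\top}\mathbf z)^2:\ \mathbf a^{\top}\widehat{\boldsymbol\Gamma}_{|m}\mathbf a=1\}$ (if $\widehat{\boldsymbol\Gamma}_{|m}=0$ the constraint set is empty, so the event considered in the Lemma has probability $0$ and there is nothing to prove). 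Any $\mathbf a_0\in\mathrm{Ker}(\widehat{\boldsymbol\Gamma}_{|m})$ corresponds to $\mathbf f_0=\sum_k a_{0,k}\boldsymbol\varphi^{(k)}$ with $\n{\mathbf f_0}_n=0$, hence $\ps{\mathbf f_0}{\mathbf X_i}=0$ for every $i$ and therefore $\mathbf a_0^{\top}\mathbf z=0$; thus $\mathbf z$ belongs to the range of $\widehat{\boldsymbol\Gamma}_{|m}$, and a generalised Cauchy--Schwarz inequality (equivalently, a Lagrange-multiplier computation) yields
\[
\sup_{\mathbf f\in\mathbf H^{(m)},\,\n{\mathbf f}_n=1}\nu_n^2(\mathbf f)=\mathbf z^{\top}\widehat{\boldsymbol\Gamma}_{|m}^{+}\mathbf z,
\]
with $\widehat{\boldsymbol\Gamma}_{|m}^{+}$ the Moore--Penrose pseudo-inverse.

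Next I would identify the conditional law of $\mathbf z$. Setting $W_{ik}:=\ps{\boldsymbol\varphi^{(k)}}{\mathbf X_i}$, we have $\mathbf z=\frac1n\,\mathbf W^{\top}\boldsymbol\varepsilon$ and $\widehat{\boldsymbol\Gamma}_{|m}=\frac1n\,\mathbf W^{\top}\mathbf W$, so conditionally on $\mathbf X$ the vector $\mathbf z$ is $\mathcal N\!\left(0,\frac{\sigma^2}{n}\widehat{\boldsymbol\Gamma}_{|m}\right)$. Consequently $\mathbf z^{\top}\widehat{\boldsymbol\Gamma}_{|m}^{+}\mathbf z=\n{(\widehat{\boldsymbol\Gamma}_{|m}^{+})^{1/2}\mathbf z}^2$, where $(\widehat{\boldsymbol\Gamma}_{|m}^{+})^{1/2}\mathbf z$ is centred Gaussian with covariance $\frac{\sigma^2}{n}P$, $P$ being the orthogonal projector onto the range of $\widehat{\boldsymbol\Gamma}_{|m}$; hence this quantity is distributed as $\frac{\sigma^2}{n}\chi^2_r$ with $r:=\mathrm{rank}(\widehat{\boldsymbol\Gamma}_{|m})\le m$. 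Since $\chi^2_r$ is stochastically dominated by $\chi^2_m$, we obtain
\[
\mathbb P_{\mathbf X}\!\left(\sup_{\mathbf f\in\mathbf H^{(m)},\,\n{\mathbf f}_n=1}\nu_n^2(\mathbf f)\ge\sigma^2\frac mn+t\right)\le\mathbb P\!\left(\chi^2_m\ge m+\frac{nt}{\sigma^2}\right).
\]

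Finally I would apply the Laurent--Massart bound $\mathbb P(\chi^2_m\ge m+2\sqrt{mx}+2x)\le e^{-x}$ with $x:=\min\{(nt/\sigma^2)^2/(16m),\,nt/(4\sigma^2)\}$: then $2\sqrt{mx}\le nt/(2\sigma^2)$ and $2x\le nt/(2\sigma^2)$, hence $2\sqrt{mx}+2x\le nt/\sigma^2$, and the probability above is at most $\exp(-\min\{n^2t^2/(16m\sigma^4),\,nt/(4\sigma^2)\})$, which is even stronger than the asserted inequality (so the constants $1536$ and $512$ are met with room to spare). There is no genuinely hard step here; the only points requiring care are the handling of the pseudo-inverse when $\widehat{\boldsymbol\Gamma}_{|m}$ is singular (precisely the regime $m>M_n$) and the fact that the Gaussian computation must be carried out after conditioning on $\mathbf X$. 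If one wished to avoid invoking the exact $\chi^2$ law, the same estimate could be reached through a $\tfrac14$-net on the $\n{\cdot}_n$-unit sphere of $\mathbf H^{(m)}$ together with a union bound over scalar Gaussian tails, which is presumably the cruder route responsible for the larger numerical constants appearing in the statement.
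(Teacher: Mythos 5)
Your proposal is correct and follows essentially the same route as the paper: both identify the supremum, conditionally on $\mathbf X_1,\hdots,\mathbf X_n$, with the quadratic form $\tfrac1n\varepsilon^tP_m\varepsilon$ where $P_m$ is the orthogonal projection of $\mathbb R^n$ onto $\{(\langle\mathbf f,\mathbf X_i\rangle)_{1\leq i\leq n},\ \mathbf f\in\mathbf H^{(m)}\}$ (your $\mathbf z^{\top}\widehat{\boldsymbol\Gamma}_{|m}^{+}\mathbf z$ is exactly this quantity), and then invoke a Gaussian quadratic-form deviation bound. The only differences are cosmetic --- you work in coordinates with the pseudo-inverse and finish with the Laurent--Massart $\chi^2$ bound, whereas the paper works directly with $P_m$ and cites a Hanson--Wright-type inequality --- and your constants are in fact sharper than those stated.
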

\begin{proof}[Proof of Lemma~\ref{lem:control_nun_empir}]
We follow the ideas of \citet{B00}. Let $m$ be fixed, and 
\[
S_m:=\left\{x=(x_1,\hdots,x_n)^t\in\mathbb R^n, \exists \mathbf f\in\mathbb H^{(m)}, \forall i, x_i=\langle\mathbf f,\mathbf X_i\rangle\right\}.
\]
We known that $S_m$ is a linear subspace of $\mathbb R^n$ and that
\[
\sup_{\mathbf f\in\mathbf H^{(m)}, \|f\|_n=1}\frac1n\sum_{i=1}^n\varepsilon_i\langle\mathbf f,\mathbf X_i\rangle=\frac1n\sup_{x\in S_m, x^tx=n}\varepsilon^t x=\frac1{\sqrt{n}}\sup_{x\in S_m, x^tx=1}\varepsilon^t x=\frac1{\sqrt{n}}\sqrt{\varepsilon^tP_m\varepsilon},
\]
where $\varepsilon=(\varepsilon_1,\hdots,\varepsilon_n)^t$ and $P_m$ is the matrix of the orthogonal projection onto $S_m$. 
This gives us 
\[
\mathbb P_{\mathbf X}\left(\sup_{\mathbf f\in\mathbf H^{(m)}, \|f\|_n=1}\left(\frac1n\sum_{i=1}^n\varepsilon_i\langle\mathbf f,\mathbf X_i\rangle\right)^2\geq \sigma^2\frac{m}{n}+t\right)= \mathbb P_{\mathbf X}\left(\varepsilon^tP_m\varepsilon\geq \sigma^2m+nt\right).
\]
We apply now \citet[Theorem 3]{B19}, with $A=P_m$ and obtain the expected results, since 
\[
\mathbb E[\varepsilon^tP_m\varepsilon]=\sigma^2\text{tr}(P_m)=\sigma^2m, 
\]
and since the Frobenius norm $\|\cdot\|_F$ of $P_m$ is equal to $\|P_m\|_F=\sqrt{\text{tr}(\Pi_m^t\Pi_m)}=\sqrt{m}$ and its matrix norm $\|P_m\|_2=1$.
\end{proof}

\section{Tails inequalities}

\begin{proposition}{\bf Equivalence of tails of Banach-valued random variables  \citep[Equation~(3.5) p. 59]{LT91}.}
\label{prop:LT91}

\medskip 

Let $X$ be a Gaussian random variable in a Banach space $(B,\|\cdot\|)$. For every $t>0$, 
\[
\mathbb P\left(\|X\|>t\right)\leq 4\exp\left(-\frac{t^2}{8\mathbb E\left[\|X\|^2\right]}\right).
\] 
\end{proposition}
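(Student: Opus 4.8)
This is precisely the tail bound quoted in the statement, namely equation~(3.5), p.~59 of \citet{LT91}, so the shortest route is simply to cite it; for completeness I would nonetheless record how it follows from the classical concentration of Gaussian measures. Throughout, $X$ is a centered Gaussian vector in $(B,\|\cdot\|)$, so that $a:=(\mathbb E[\|X\|^2])^{1/2}$ is finite (Fernique's theorem), and I may assume $a>0$, the case $X=0$ being trivial.

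First I would reduce the norm to a countable supremum: the closed support of $X$ is separable, hence there is a countable set $D$ in the unit ball of $B^*$ with $\|x\|=\sup_{\xi\in D}\xi(x)$ for every $x$ in that support. The family $\bigl(\xi(X)\bigr)_{\xi\in D}$ is then a bounded, centered Gaussian process, and the Borell--Tsirelson--Ibragimov--Sudakov concentration inequality gives, for every $u>0$,
\[
\mathbb P\bigl(\|X\|\ge \mathbb E\|X\|+u\bigr)\le \exp\!\left(-\frac{u^2}{2\sigma_*^2}\right),\qquad \sigma_*^2:=\sup_{\xi\in D}\mathbb E\bigl[\xi(X)^2\bigr].
\]
Next I would invoke the two elementary estimates $\sigma_*^2\le \mathbb E[\|X\|^2]=a^2$ (since $|\xi(X)|\le\|X\|$ for $\xi$ in the unit ball of $B^*$) and $\mathbb E\|X\|\le a$ (Jensen).

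It then remains to split into two regimes. If $t\le 2a$ the claimed bound is vacuous, since its right-hand side is at least $4e^{-1/2}>1\ge\mathbb P(\|X\|>t)$. If $t>2a$, then $t-\mathbb E\|X\|\ge t-a>t/2$, so applying the displayed concentration inequality with $u=t-\mathbb E\|X\|$ yields
\[
\mathbb P(\|X\|>t)\le \exp\!\left(-\frac{(t-\mathbb E\|X\|)^2}{2a^2}\right)\le \exp\!\left(-\frac{t^2}{8a^2}\right)\le 4\exp\!\left(-\frac{t^2}{8\,\mathbb E[\|X\|^2]}\right),
\]
which is the claim, indeed with the factor $4$ to spare. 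There is no real obstacle here: the only point requiring a little care is precisely this case distinction, which is what lets the slack in the constants absorb the gap between $\mathbb E\|X\|$ and $(\mathbb E[\|X\|^2])^{1/2}$ and between the weak variance $\sigma_*^2$ and $\mathbb E[\|X\|^2]$.
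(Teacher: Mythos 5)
Your proposal is correct. Note that the paper itself offers no proof of this statement: it is quoted verbatim as Equation~(3.5), p.~59 of \citet{LT91}, so the citation alone is all the paper relies on, and your first sentence already matches that. The supplementary derivation you give via the Borell--Tsirelson--Ibragimov--Sudakov concentration inequality is sound: the reduction of the norm to a countable supremum over the dual unit ball, the bounds $\sigma_*^2\le \mathbb E[\|X\|^2]$ and $\mathbb E\|X\|\le(\mathbb E[\|X\|^2])^{1/2}$, and the split at $t=2a$ (where the right-hand side exceeds $4e^{-1/2}>1$) all check out, and in the regime $t>2a$ the inequality $(t-\mathbb E\|X\|)^2\ge t^2/4$ does deliver $\exp(-t^2/(8\,\mathbb E[\|X\|^2]))$ with the constant $4$ to spare. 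This is a somewhat heavier route than the one in \citet{LT91} itself (which obtains the tail bound by more elementary means, without invoking Gaussian isoperimetry), but it is a legitimate and self-contained alternative; the only implicit hypothesis is that the Gaussian variable is centered and Radon, which is the standard convention and is satisfied in the paper's application, where the bound is applied conditionally to $\frac1n\sum_{i=1}^n\varepsilon_iX_i^j$ in a separable Hilbert space.
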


\bigskip

\begin{proposition}{\bf Bernstein inequality \citep[Lemma 8]{BM98}.}
\label{lem:Bernstein}

\medskip 

Let $Z_1,\hdots,Z_n$ be independent random variables satisfying the moments conditions
\[
\frac1n\sum^n_{i=1}\mathbb E\left[\left|Z_i\right|^\ell\right] \leq \frac{\ell!}2 v^2c^{\ell-2}, \text{ for all }\ell \geq 2,
\]
for some positive constants $v$ and $c$. Then, for any positive $\varepsilon$,
\[
\mathbb P\left(\left|\frac1n\sum^n_{i=1}Z_i - \mathbb E\left[Z_i\right]\right| \geq \varepsilon\right)\leq 2 \exp\left(-\frac{n\varepsilon^2/2}{v^2 + c\varepsilon}\right).
\]
\end{proposition}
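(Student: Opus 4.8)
The plan is to run the classical Cram\'er--Chernoff argument: bound the Laplace transform of the centred sum by means of the power-series moment hypothesis, apply the exponential Markov inequality, and then optimise in the free parameter. First I would reduce the two-sided estimate to a one-sided one by a union bound, writing $\mathbb{P}(|\frac1n\sum_{i=1}^n(Z_i-\mathbb{E}[Z_i])|\geq\varepsilon)\leq \mathbb{P}(\frac1n\sum_{i=1}^n(Z_i-\mathbb{E}[Z_i])\geq\varepsilon)+\mathbb{P}(\frac1n\sum_{i=1}^n(-Z_i-\mathbb{E}[-Z_i])\geq\varepsilon)$, and observing that the variables $(-Z_i)_i$ satisfy the same moment condition since $|-Z_i|=|Z_i|$; hence it suffices to bound one side and double the result.

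For the one-sided probability, fix $0<\lambda<1/c$ and apply Markov's inequality to $\exp(\lambda\sum_{i=1}^n(Z_i-\mathbb{E}[Z_i]))$, which, together with independence, gives $\mathbb{P}(\frac1n\sum_{i=1}^n(Z_i-\mathbb{E}[Z_i])\geq\varepsilon)\leq e^{-n\lambda\varepsilon}\prod_{i=1}^n\mathbb{E}[e^{\lambda(Z_i-\mathbb{E}[Z_i])}]$. The key step is the per-term bound on $\mathbb{E}[e^{\lambda(Z_i-\mathbb{E}[Z_i])}]$: expanding $e^{\lambda Z_i}$ in series and using $\mathbb{E}[Z_i^\ell]\leq\mathbb{E}[|Z_i|^\ell]$ (legitimate since $\lambda>0$ and the series converges by the moment bound) yields $\mathbb{E}[e^{\lambda Z_i}]\leq 1+\lambda\mathbb{E}[Z_i]+\sum_{\ell\geq 2}\frac{\lambda^\ell}{\ell!}\mathbb{E}[|Z_i|^\ell]$, and then the elementary inequality $1+u\leq e^u$, valid for every real $u$, lets one absorb the linear term: $\mathbb{E}[e^{\lambda(Z_i-\mathbb{E}[Z_i])}]=e^{-\lambda\mathbb{E}[Z_i]}\mathbb{E}[e^{\lambda Z_i}]\leq \exp(\sum_{\ell\geq 2}\frac{\lambda^\ell}{\ell!}\mathbb{E}[|Z_i|^\ell])$. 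Multiplying over $i$, inserting the hypothesis $\sum_{i=1}^n\mathbb{E}[|Z_i|^\ell]\leq n\frac{\ell!}{2}v^2c^{\ell-2}$, and summing the geometric series $\sum_{\ell\geq 2}\lambda^\ell c^{\ell-2}=\lambda^2/(1-\lambda c)$ gives $\prod_{i=1}^n\mathbb{E}[e^{\lambda(Z_i-\mathbb{E}[Z_i])}]\leq \exp(\frac{nv^2\lambda^2}{2(1-\lambda c)})$, hence $\mathbb{P}(\frac1n\sum_{i=1}^n(Z_i-\mathbb{E}[Z_i])\geq\varepsilon)\leq \exp(-n\lambda\varepsilon+\frac{nv^2\lambda^2}{2(1-\lambda c)})$.

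It remains to optimise in $\lambda\in(0,1/c)$. Taking $\lambda=\varepsilon/(v^2+c\varepsilon)$, which lies in $(0,1/c)$ because $v>0$, one has $1-\lambda c=v^2/(v^2+c\varepsilon)$, so the exponent collapses to $-\frac{n\varepsilon^2}{2(v^2+c\varepsilon)}$; combined with the factor $2$ from the two-sided reduction this is exactly the claimed bound. I do not expect a genuine obstacle here: the only points requiring care are keeping $\lambda$ strictly below the radius of convergence $1/c$ (so that the moment expansion and the geometric sum are both legitimate) and the clean handling of the mean via $1+u\leq e^u$; the remainder is a routine computation.
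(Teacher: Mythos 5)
Your proof is correct and complete: the reduction to a one-sided bound, the series expansion of the moment generating function under the condition $\lambda<1/c$, the absorption of the mean via $1+u\leq e^u$, the geometric summation giving $\exp\bigl(nv^2\lambda^2/(2(1-\lambda c))\bigr)$, and the choice $\lambda=\varepsilon/(v^2+c\varepsilon)$ all check out, and you correctly exploit that the hypothesis only controls the \emph{averaged} moments by summing over $i$ before invoking it. The paper itself gives no proof of this proposition --- it is quoted verbatim from \citet[Lemma 8]{BM98} --- and your argument is precisely the classical Cram\'er--Chernoff derivation behind that lemma, so there is nothing in the paper to diverge from.
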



%
%
%

\bigskip

\begin{proposition}{\bf Norm equivalence in finite subspaces.}
\label{prop:norm_equiv}

\medskip 

Let $\mathbf X_1,...,\mathbf X_n$ be i.i.d copies of a random variable $\mathbf X$ verifying Assumption $(H_{Mom}^{(1)})$.
Then, for all $t>0$, for all weights $\mathbf w=(w_1,...,w_m)\in]0,+\infty[^m$,
\begin{equation}\label{eq:equiv_norm_w}
\mathbb P\left(\sup_{\boldsymbol\beta\in\mathbf H^{(m)}\backslash\{0\}}\left|\frac{\|\boldsymbol\beta\|_n^2-\|\boldsymbol\beta\|_{\boldsymbol\Gamma}^2}{\|\boldsymbol\beta\|^2_{\mathbf w}}\right|>t\right)\leq 2m^2\exp\left(-\frac{nt^2}{b\sum_{j=1}^m\frac{\tilde v_j}{w_j}\left(4\sum_{j=1}^m\frac{\tilde v_j}{w_j}+ t\right)}\right),
\end{equation}
where $\|\boldsymbol\beta\|_n^2=\frac1n\sum_{i=1}^n\langle\boldsymbol\beta,\mathbf X_i\rangle^2$,  $\|\boldsymbol\beta\|_{\boldsymbol\Gamma}^2=\mathbb E\left[\|\boldsymbol\beta\|_n^2\right]$, and $\|\boldsymbol\beta\|_{\mathbf w}^2=\sum_{j=1}^mw_j\langle\boldsymbol\beta,\boldsymbol\varphi^{(j)}\rangle^2$ and
\begin{equation}\label{eq:equiv_norm_Gamma}
\mathbb P\left(\sup_{\boldsymbol\beta\in\mathbf H^{(m)}\backslash\{0\}}\left|\frac{\|\boldsymbol\beta\|_n^2-\|\boldsymbol\beta\|_{\boldsymbol\Gamma}^2}{\|\boldsymbol\beta\|^2_{\boldsymbol\Gamma}}\right|>t\right)\leq 2m^2\exp\exp\left(-\frac{n\rho^2(\boldsymbol\Gamma_{|m})t^2}{b\sum_{j=1}^m\tilde v_j\left(4\sum_{j=1}^m\tilde v_j+ t\rho(\boldsymbol\Gamma_{|m})\right)}\right).
\end{equation}
\end{proposition}

\begin{proof}[Proof of Proposition~\ref{prop:norm_equiv}]

We have, for all $\boldsymbol\beta\in\mathbf H^{(m)}$, $\|\boldsymbol\beta\|_n^2=\langle\widehat{\boldsymbol\Gamma}\boldsymbol\beta,\boldsymbol\beta\rangle$. Hence, 
\[
\|\boldsymbol\beta\|_n^2-\|\boldsymbol\beta\|_{\boldsymbol\Gamma}^2=\langle(\widehat{\boldsymbol\Gamma}-\boldsymbol\Gamma)\boldsymbol\beta,\boldsymbol\beta\rangle=\sum_{j,k=1}^m\langle\boldsymbol\beta,\boldsymbol\varphi^{(j)}\rangle\langle\boldsymbol\beta,\boldsymbol\varphi^{(k)}\rangle \langle(\widehat{\boldsymbol\Gamma}-\boldsymbol\Gamma)\boldsymbol\varphi^{(j)},\boldsymbol\varphi^{(k)}\rangle=b^t\Phi_m b,
\]
with $b:=\left(\langle\boldsymbol\beta,\boldsymbol\varphi^{(1)}\rangle,...,\langle\boldsymbol\beta,\boldsymbol\varphi^{(m)}\rangle\right)^t$ and $\Phi_m=\left(\langle(\widehat{\boldsymbol\Gamma}-\boldsymbol\Gamma)\boldsymbol\varphi^{(j)},\boldsymbol\varphi^{(k)}\rangle\right)_{1\leq j,k\leq m}$ which implies
\begin{eqnarray*}
\sup_{\boldsymbol\beta\in\mathbf H^{(m)}\backslash\{0\}}\left|\frac{\|\boldsymbol\beta\|_n^2-\|\boldsymbol\beta\|_{\boldsymbol\Gamma}^2}{\|\boldsymbol\beta\|_{\mathbf w}^2}\right|&=&\rho(W^{-1/2}\Phi_mW^{-1/2})\leq\sqrt{ \text{tr}(W^{-1}\Phi_m\Phi_m^tW^{-1})}\\
&=&\sqrt{\sum_{j,k=1}^m\frac{\langle(\widehat{\boldsymbol\Gamma}-\boldsymbol\Gamma)\boldsymbol\varphi^{(j)},\boldsymbol\varphi^{(k)}\rangle^2}{w_jw_k}},
\end{eqnarray*}
where $\rho$ denotes the spectral radius, and $W$ the diagonal matrix with diagonal entries $(w_1,\hdots,w_m)$.
We then have 
\begin{eqnarray*}
\mathbb P\left(\sup_{\boldsymbol\beta\in\mathbf H^{(m)}\backslash\{0\}}\left|\frac{\|\boldsymbol\beta\|_n^2-\|\boldsymbol\beta\|_{\boldsymbol\Gamma}^2}{\|\boldsymbol\beta\|^2_{\mathbf w}}\right|>t\right)&\leq&\mathbb P\left(\sum_{j,k=1}^m\frac{\langle(\widehat{\boldsymbol\Gamma}-\boldsymbol\Gamma)\boldsymbol\varphi_j,\boldsymbol\varphi_k\rangle^2}{w_jw_k}>t^2\right)\\
&\leq & \mathbb P\left(\bigcup_{j,k=1}^m\left\{\frac{\langle(\widehat{\boldsymbol\Gamma}-\boldsymbol\Gamma)\boldsymbol\varphi^{(j)},\boldsymbol\varphi^{(k)}\rangle^2}{w_jw_k}>p_{j,k}t^2\right\}\right),\\
&\leq & \sum_{j,k=1}^m\mathbb P\left(\frac{\left|\langle(\widehat{\boldsymbol\Gamma}-\boldsymbol\Gamma)\boldsymbol\varphi^{(j)},\boldsymbol\varphi^{(k)}\rangle\right|}{\sqrt{w_jw_k}}>\sqrt{p_{j,k}}t\right),
\end{eqnarray*}
where $p_{j,k}:=\frac{\tilde v_j\tilde v_k}{w_jw_k}\left(\sum_{\ell=1}^m\tilde v_\ell/w_\ell\right)^{-2}$ (remark that $\sum_{j,k=1}^mp_{j,k}=1$). Now, for all $j,k=1,...,m$,
\begin{eqnarray*}
\mathbb P\left(\frac{\left|\langle(\widehat{\boldsymbol\Gamma}-\boldsymbol\Gamma)\boldsymbol\varphi^{(j)},\boldsymbol\varphi^{(k)}\rangle\right|}{\sqrt{w_jw_k}}>\sqrt{p_{j,k}}t\right)&&\\
&&\hspace{-3cm}=\mathbb P\left(\left|\frac1n\sum_{i=1}^n\frac{\langle\boldsymbol\varphi^{(j)},\mathbf X_i\rangle\langle\boldsymbol\varphi^{(k)},\mathbf X_i\rangle}{\sqrt{w_jw_k}}-\mathbb E\left[\frac{\langle\boldsymbol\varphi^{(j)},\mathbf X_i\rangle\langle\boldsymbol\varphi^{(k)},\mathbf X_i\rangle}{\sqrt{w_jw_k}}\right]\right|>\sqrt{p_{j,k}}t\right).
\end{eqnarray*}
By Cauchy-Schwarz inequality, for all $\ell\geq 2$,
\begin{eqnarray*}
\mathbb E\left[\left|\frac{\langle\boldsymbol\varphi^{(j)},\mathbf X_i\rangle\langle\boldsymbol\varphi^{(k)},\mathbf X_i\rangle}{\sqrt{w_jw_k}}\right|^\ell\right]&\leq& \frac{\sqrt{\mathbb E\left[\langle\boldsymbol\varphi^{(j)},\mathbf X\rangle^{2\ell}\right]\mathbb E\left[\langle\boldsymbol\varphi^{(k)},\mathbf X\rangle^{2\ell}\right]}}{\sqrt{w_jw_k}^\ell}\\
&\leq &\ell!b^{\ell-1}\sqrt{\frac{\tilde v_j}{w_j}}^{\ell}\sqrt{\frac{\tilde v_k}{w_k}}^{\ell}=\frac{\ell!}2\; 2b\frac{\tilde v_j}{w_j}\frac{\tilde v_k}{w_k}\;\left(b\sqrt{\frac{\tilde v_j}{w_j}}\sqrt{\frac{\tilde v_k}{w_k}}\right)^{\ell-2}.
\end{eqnarray*}
Hence, Bernstein's inequality (Lemma~\ref{lem:Bernstein}) implies that
\begin{equation*}
\mathbb P\left(\frac{\left|\langle(\widehat{\boldsymbol\Gamma}-\boldsymbol\Gamma)\boldsymbol\varphi^{(j)},\boldsymbol\varphi^{(k)}\rangle\right|}{\sqrt{w_jw_k}}>\sqrt{p_{j,k}}t\right)\leq 2\exp\left(-\frac{np_{j,k}t^2/2}{2b\frac{\tilde v_j\tilde v_k}{w_jw_k}+b\sqrt{\frac{\tilde v_j}{w_j}}\sqrt{\frac{\tilde v_k}{w_k}}\sqrt{p_{j,k}}t}\right),
\end{equation*}
and the definition of $p_{j,k}$ implies Equation~\eqref{eq:equiv_norm_w}. 

We proceed similarly to prove Equation~\eqref{eq:equiv_norm_Gamma} from the upper-bound 
\begin{equation*}
\sup_{\boldsymbol\beta\in\mathbf H^{(m)}\backslash\{0\}}\left|\frac{\|\boldsymbol\beta\|_n^2-\|\boldsymbol\beta\|_{\boldsymbol\Gamma}^2}{\|\boldsymbol\beta\|_{\boldsymbol\Gamma}^2}\right|=\rho(\boldsymbol\Gamma_{|m}^{-1/2}\Phi_m\boldsymbol\Gamma_{|m}^{-1/2})\leq\rho(\Phi_m)\rho(\boldsymbol\Gamma_{|m}^{-1})=\rho(\Phi_m)\rho(\boldsymbol\Gamma_{|m})^{-1}
\end{equation*}

Following the same reasoning as above with $w_1=\hdots=w_m=1$, we get, for all $t>0$,
\[
\mathbb P\left(\rho(\Phi_m)>t\right)\leq 2m^2\exp\left(-\frac{nt^2}{b\sum_{j=1}^m\tilde v_j\left(4\sum_{j=1}^m\tilde v_j+t\right)}\right),
\]
which proves Equation~\eqref{eq:equiv_norm_Gamma}.
\end{proof}

\bibliographystyle{abbrvnat}
\bibliography{biblio_lasso} 

\begin{thebibliography}{85}
\providecommand{\natexlab}[1]{#1}
\providecommand{\url}[1]{\texttt{#1}}
\expandafter\ifx\csname urlstyle\endcsname\relax
  \providecommand{\doi}[1]{doi: #1}\else
  \providecommand{\doi}{doi: \begingroup \urlstyle{rm}\Url}\fi

\bibitem[Aneiros and Vieu(2014)]{AV14}
G.~Aneiros and P.~Vieu.
\newblock Variable selection in infinite-dimensional problems.
\newblock \emph{Statist. Probab. Lett.}, 94:\penalty0 12--20, 2014.

\bibitem[Aneiros and Vieu(2016)]{AV16}
G.~Aneiros and P.~Vieu.
\newblock Sparse nonparametric model for regression with functional covariate.
\newblock \emph{J. Nonparametr. Stat.}, 28\penalty0 (4):\penalty0 839--859,
  2016.

\bibitem[Aneiros-P{\'e}rez et~al.(2004)Aneiros-P{\'e}rez, Cardot,
  Est{\'e}vez-P{\'e}rez, and Vieu]{ACEV04}
G.~Aneiros-P{\'e}rez, H.~Cardot, G.~Est{\'e}vez-P{\'e}rez, and P.~Vieu.
\newblock Maximum ozone concentration forecasting by functional non-parametric
  approaches.
\newblock \emph{Environmetrics}, 15\penalty0 (7):\penalty0 675--685, 2004.

\bibitem[Bach(2008)]{Bach08}
F.~R. Bach.
\newblock Consistency of the group lasso and multiple kernel learning.
\newblock \emph{J. Mach. Learn. Res.}, 9:\penalty0 1179--1225, 2008.

\bibitem[Baraud(2000)]{B00}
Y.~Baraud.
\newblock Model selection for regression on a fixed design.
\newblock \emph{Probab. Theory Relat. Fields}, 117\penalty0 (4):\penalty0
  467--493, Aug 2000.

\bibitem[Barron et~al.(1999)Barron, Birg{\'e}, and Massart]{BBM99}
A.~Barron, L.~Birg{\'e}, and P.~Massart.
\newblock Risk bounds for model selection via penalization.
\newblock \emph{Probab. Theory Relat. Fields}, 113\penalty0 (3):\penalty0
  301--413, Feb 1999.

\bibitem[Baudry et~al.(2012)Baudry, Maugis, and Michel]{BMM12}
J.-P. Baudry, C.~Maugis, and B.~Michel.
\newblock Slope heuristics: overview and implementation.
\newblock \emph{Stat. Comput.}, 22\penalty0 (2):\penalty0 455--470, Mar 2012.

\bibitem[Bellec and Tsybakov(2017)]{BT17}
P.~Bellec and A.~Tsybakov.
\newblock Bounds on the prediction error of penalized least squares estimators
  with convex penalty.
\newblock In \emph{Modern problems of stochastic analysis and statistics},
  volume 208 of \emph{Springer Proc. Math. Stat.}, pages 315--333. Springer,
  Cham, 2017.

\bibitem[Bellec(2019)]{B19}
P.~C. Bellec.
\newblock Concentration of quadratic forms under a bernstein moment assumption.
\newblock 2019.

\bibitem[Bellec et~al.(2018)Bellec, Lecu\'{e}, and Tsybakov]{BLT18}
P.~C. Bellec, G.~Lecu\'{e}, and A.~B. Tsybakov.
\newblock Slope meets {L}asso: improved oracle bounds and optimality.
\newblock \emph{Ann. Statist.}, 46\penalty0 (6B):\penalty0 3603--3642, 2018.

\bibitem[Bertin et~al.(2011)Bertin, Le~Pennec, and Rivoirard]{BPR11}
K.~Bertin, E.~Le~Pennec, and V.~Rivoirard.
\newblock Adaptive {D}antzig density estimation.
\newblock \emph{Ann. Inst. Henri Poincar\'e Probab. Stat.}, 47\penalty0
  (1):\penalty0 43--74, 2011.

\bibitem[Bickel et~al.(2009)Bickel, Ritov, and
  Tsybakov]{bickel_simultaneous_2009}
P.~J. Bickel, Y.~Ritov, and A.~B. Tsybakov.
\newblock Simultaneous analysis of lasso and {D}antzig selector.
\newblock \emph{Ann. Statist.}, 37\penalty0 (4):\penalty0 1705--1732, 2009.

\bibitem[Birg\'{e} and Massart(1998)]{BM98}
L.~Birg\'{e} and P.~Massart.
\newblock Minimum contrast estimators on sieves: exponential bounds and rates
  of convergence.
\newblock \emph{Bernoulli}, 4\penalty0 (3):\penalty0 329--375, 1998.

\bibitem[Blaz\`ere et~al.(2014)Blaz\`ere, Loubes, and Gamboa]{BLG14}
M.~Blaz\`ere, J.-M. Loubes, and F.~Gamboa.
\newblock Oracle inequalities for a group lasso procedure applied to
  generalized linear models in high dimension.
\newblock \emph{IEEE Trans. Inform. Theory}, 60\penalty0 (4):\penalty0
  2303--2318, 2014.

\bibitem[Brunel et~al.(2016)Brunel, Mas, and Roche]{BMR16}
E.~Brunel, A.~Mas, and A.~Roche.
\newblock Non-asymptotic adaptive prediction in functional linear models.
\newblock \emph{J. Multivariate Anal.}, 143:\penalty0 208--232, 2016.

\bibitem[Bunea et~al.(2007)Bunea, Tsybakov, and Wegkamp]{BTW07}
F.~Bunea, A.~Tsybakov, and M.~Wegkamp.
\newblock Sparsity oracle inequalities for the {L}asso.
\newblock \emph{Electron. J. Stat.}, 1:\penalty0 169--194, 2007.

\bibitem[Candanedo et~al.(2017)Candanedo, Feldheim, and Deramaix]{CFD17}
L.~M. Candanedo, V.~Feldheim, and D.~Deramaix.
\newblock Data driven prediction models of energy use of appliances in a
  low-energy house.
\newblock \emph{Energy and Buildings}, 140, 2017.

\bibitem[Cardot and Johannes(2010)]{CJ10b}
H.~Cardot and J.~Johannes.
\newblock Thresholding projection estimators in functional linear models.
\newblock \emph{J. Multivariate Anal.}, 101\penalty0 (2):\penalty0 395--408,
  2010.

\bibitem[Cardot et~al.(1999)Cardot, Ferraty, and Sarda]{CFS1999}
H.~Cardot, F.~Ferraty, and P.~Sarda.
\newblock Functional linear model.
\newblock \emph{Statist. Probab. Lett.}, 45\penalty0 (1):\penalty0 11--22,
  1999.

\bibitem[Cardot et~al.(2003)Cardot, Ferraty, and Sarda]{CFS2003}
H.~Cardot, F.~Ferraty, and P.~Sarda.
\newblock Spline estimators for the functional linear model.
\newblock \emph{Statist. Sinica}, 13\penalty0 (3):\penalty0 571--591, 2003.

\bibitem[Cardot et~al.(2007)Cardot, Crambes, and Sarda]{CCS07}
H.~Cardot, C.~Crambes, and P.~Sarda.
\newblock Ozone pollution forecasting using conditional mean and conditional
  quantiles with functional covariates.
\newblock In \emph{Statistical methods for biostatistics and related fields},
  pages 221--243. Springer, Berlin, 2007.

\bibitem[Chagny and Roche(2016)]{CR16}
G.~Chagny and A.~Roche.
\newblock Adaptive estimation in the functional nonparametric regression model.
\newblock \emph{J. Multivariate Anal.}, 146:\penalty0 105--118, 2016.

\bibitem[Chan et~al.(2014)Chan, Yau, and Zhang]{CYZ14}
N.~H. Chan, C.~Y. Yau, and R.-M. Zhang.
\newblock Group {LASSO} for structural break time series.
\newblock \emph{J. Amer. Statist. Assoc.}, 109\penalty0 (506):\penalty0
  590--599, 2014.

\bibitem[Chen et~al.(1998)Chen, Donoho, and Saunders]{CDS98}
S.~S. Chen, D.~L. Donoho, and M.~A. Saunders.
\newblock Atomic decomposition by basis pursuit.
\newblock \emph{SIAM Journal on Scientific Computing}, 20\penalty0
  (1):\penalty0 33--61, 1998.

\bibitem[Chesneau and Hebiri(2008)]{CH08}
C.~Chesneau and M.~Hebiri.
\newblock Some theoretical results on the grouped variables {L}asso.
\newblock \emph{Math. Methods Statist.}, 17\penalty0 (4):\penalty0 317--326,
  2008.

\bibitem[Chiou et~al.(2014)Chiou, Chen, and Yang]{CCY14}
J.-M. Chiou, Y.-T. Chen, and Y.-F. Yang.
\newblock Multivariate functional principal component analysis: a normalization
  approach.
\newblock \emph{Statist. Sinica}, 24\penalty0 (4):\penalty0 1571--1596, 2014.

\bibitem[Chiou et~al.(2016)Chiou, Yang, and Chen]{CYC16}
J.-M. Chiou, Y.-F. Yang, and Y.-T. Chen.
\newblock Multivariate functional linear regression and prediction.
\newblock \emph{J. Multivariate Anal.}, 146:\penalty0 301--312, 2016.

\bibitem[Comte and Johannes(2010)]{CJ10}
F.~Comte and J.~Johannes.
\newblock Adaptive estimation in circular functional linear models.
\newblock \emph{Math. Methods Statist.}, 19\penalty0 (1):\penalty0 42--63,
  2010.

\bibitem[Comte and Johannes(2012)]{CJ12}
F.~Comte and J.~Johannes.
\newblock Adaptive functional linear regression.
\newblock \emph{Ann. Statist.}, 40\penalty0 (6):\penalty0 2765--2797, 2012.

\bibitem[Crambes et~al.(2009)Crambes, Kneip, and Sarda]{CKS09}
C.~Crambes, A.~Kneip, and P.~Sarda.
\newblock Smoothing splines estimators for functional linear regression.
\newblock \emph{Ann. Statist.}, 37\penalty0 (1):\penalty0 35--72, 2009.

\bibitem[Dalalyan et~al.(2013)Dalalyan, Hebiri, Meziani, and Salmon]{DHMS13}
A.~Dalalyan, M.~Hebiri, K.~Meziani, and J.~Salmon.
\newblock Learning heteroscedastic models by convex programming under group
  sparsity.
\newblock In S.~Dasgupta and D.~McAllester, editors, \emph{Proceedings of the
  30th International Conference on Machine Learning}, volume~28 of
  \emph{Proceedings of Machine Learning Research}, pages 379--387, Atlanta,
  Georgia, USA, 17--19 Jun 2013. PMLR.
\newblock URL \url{https://proceedings.mlr.press/v28/dalalyan13.html}.

\bibitem[Devijver(2017)]{D17}
E.~Devijver.
\newblock Model-based regression clustering for high-dimensional data:
  application to functional data.
\newblock \emph{Adv. Data Anal. Classif.}, 11\penalty0 (2):\penalty0 243--279,
  2017.

\bibitem[Di et~al.(2009)Di, Crainiceanu, Caffo, and
  Punjabi]{di_multilevel_2009}
C.-Z. Di, C.~M. Crainiceanu, B.~S. Caffo, and N.~M. Punjabi.
\newblock Multilevel functional principal component analysis.
\newblock \emph{Ann. Appl. Stat.}, 3\penalty0 (1):\penalty0 458--488, 2009.

\bibitem[Fan et~al.(2016)Fan, Wu, Yuan, Page, Liu, Ong, Peissig, and
  Burnside]{FWYPLOPB16}
J.~Fan, Y.~Wu, M.~Yuan, D.~Page, J.~Liu, I.~M. Ong, P.~Peissig, and
  E.~Burnside.
\newblock Structure-leveraged methods in breast cancer risk prediction.
\newblock \emph{J. Mach. Learn. Res.}, 17:\penalty0 Paper No. 85, 15, 2016.

\bibitem[Ferraty and Romain(2011)]{FR11}
F.~Ferraty and Y.~Romain, editors.
\newblock \emph{The {O}xford handbook of functional data analysis}.
\newblock Oxford University Press, Oxford, 2011.

\bibitem[Ferraty and Vieu(2000)]{FV00}
F.~Ferraty and P.~Vieu.
\newblock Dimension fractale et estimation de la r\'egression dans des espaces
  vectoriels semi-norm\'es.
\newblock \emph{C. R. Acad. Sci. Paris S\'er. I Math.}, 330\penalty0
  (2):\penalty0 139--142, 2000.

\bibitem[Ferraty and Vieu(2006)]{FV06}
F.~Ferraty and P.~Vieu.
\newblock \emph{Nonparametric functional data analysis}.
\newblock Springer Series in Statistics. Springer, New York, 2006.
\newblock Theory and practice.

\bibitem[Friedman et~al.(2010)Friedman, Hastie, and Tibshirani]{glmnet}
J.~Friedman, T.~Hastie, and R.~Tibshirani.
\newblock Regularization paths for generalized linear models via coordinate
  descent.
\newblock \emph{Journal of Statistical Software}, 33\penalty0 (1):\penalty0
  1--22, 2010.

\bibitem[Geenens(2011)]{Geenens11}
G.~Geenens.
\newblock Curse of dimensionality and related issues in nonparametric
  functional regression.
\newblock \emph{Stat. Surv.}, 5:\penalty0 30--43, 2011.

\bibitem[Giraud(2015)]{Giraud15}
C.~Giraud.
\newblock \emph{Introduction to high-dimensional statistics}, volume 139 of
  \emph{Monographs on Statistics and Applied Probability}.
\newblock CRC Press, Boca Raton, FL, 2015.

\bibitem[Goia and Vieu(2016)]{GV16}
A.~Goia and P.~Vieu.
\newblock An introduction to recent advances in high/infinite dimensional
  statistics [{E}ditorial].
\newblock \emph{J. Multivariate Anal.}, 146:\penalty0 1--6, 2016.

\bibitem[Grollemund et~al.(2019)Grollemund, Abraham, Baragatti, and
  Pudlo]{GABP19}
P.-M. Grollemund, C.~Abraham, M.~Baragatti, and P.~Pudlo.
\newblock Bayesian functional linear regression with sparse step functions.
\newblock \emph{Bayesian Anal.}, 14\penalty0 (1):\penalty0 111--135, 2019.

\bibitem[Huang and Zhang(2010)]{HZ10}
J.~Huang and T.~Zhang.
\newblock The benefit of group sparsity.
\newblock \emph{Ann. Statist.}, 38\penalty0 (4):\penalty0 1978--2004, 2010.

\bibitem[Ivanoff et~al.(2016)Ivanoff, Picard, and Rivoirard]{IPR16}
S.~Ivanoff, F.~Picard, and V.~Rivoirard.
\newblock Adaptive {L}asso and group-{L}asso for functional {P}oisson
  regression.
\newblock \emph{J. Mach. Learn. Res.}, 17:\penalty0 Paper No. 55, 46, 2016.

\bibitem[James et~al.(2009)James, Wang, and Zhu]{JWZ09}
G.~James, J.~Wang, and J.~Zhu.
\newblock Functional linear regression that's interpretable.
\newblock \emph{Ann. Statist.}, 37\penalty0 (5A):\penalty0 2083--2108, 2009.

\bibitem[Jiang et~al.(2019)Jiang, Reynaud-Bouret, Rivoirard, Sansonnet, and
  Willett]{JRRSW19}
X.~Jiang, P.~Reynaud-Bouret, V.~Rivoirard, L.~Sansonnet, and R.~M. Willett.
\newblock A data-dependent weighted lasso under poisson noise.
\newblock \emph{IEEE Trans. Inf. Theory}, 65:\penalty0 1589--1613, 2019.

\bibitem[Koltchinskii(2009)]{Koltchinskii09}
V.~Koltchinskii.
\newblock The dantzig selector and sparsity oracle inequalities.
\newblock \emph{Bernoulli}, 15\penalty0 (3):\penalty0 799--828, 08 2009.

\bibitem[Koltchinskii and Minsker(2014)]{KM14}
V.~Koltchinskii and S.~Minsker.
\newblock {$L_1$}-penalization in functional linear regression with subgaussian
  design.
\newblock \emph{J. \'Ec. polytech. Math.}, 1:\penalty0 269--330, 2014.

\bibitem[Kong et~al.(2016)Kong, Xue, Yao, and Zhang]{KXYZ16}
D.~Kong, K.~Xue, F.~Yao, and H.~H. Zhang.
\newblock Partially functional linear regression in high dimensions.
\newblock \emph{Biometrika}, 103\penalty0 (1):\penalty0 147--159, 2016.

\bibitem[Kwemou(2016)]{Kwemou16}
M.~Kwemou.
\newblock Non-asymptotic oracle inequalities for the {L}asso and group {L}asso
  in high dimensional logistic model.
\newblock \emph{ESAIM Probab. Stat.}, 20:\penalty0 309--331, 2016.

\bibitem[Laurini(2014)]{laurini_dynamic_2014}
M.~P. Laurini.
\newblock Dynamic functional data analysis with non-parametric state space
  models.
\newblock \emph{J. Appl. Stat.}, 41\penalty0 (1):\penalty0 142--163, 2014.

\bibitem[Ledoux and Talagrand(1991)]{LT91}
M.~Ledoux and M.~Talagrand.
\newblock \emph{Probability in {B}anach spaces}, volume~23 of \emph{Ergebnisse
  der Mathematik und ihrer Grenzgebiete (3) [Results in Mathematics and Related
  Areas (3)]}.
\newblock Springer-Verlag, Berlin, 1991.
\newblock Isoperimetry and processes.

\bibitem[Li et~al.(2016)Li, Qian, and Su]{DQS16}
D.~Li, J.~Qian, and L.~Su.
\newblock Panel data models with interactive fixed effects and multiple
  structural breaks.
\newblock \emph{J. Amer. Statist. Assoc.}, 111\penalty0 (516):\penalty0
  1804--1819, 2016.

\bibitem[Li and Hsing(2007)]{LH07}
Y.~Li and T.~Hsing.
\newblock On rates of convergence in functional linear regression.
\newblock \emph{J. Multivariate Anal.}, 98\penalty0 (9):\penalty0 1782--1804,
  2007.

\bibitem[Lin and Zhang(2006)]{LZ06}
Y.~Lin and H.~H. Zhang.
\newblock Component selection and smoothing in multivariate nonparametric
  regression.
\newblock \emph{Ann. Statist.}, 34\penalty0 (5):\penalty0 2272--2297, 2006.

\bibitem[Ling and Vieu(2018)]{LV18}
N.~Ling and P.~Vieu.
\newblock Nonparametric modelling for functional data: selected survey and
  tracks for future.
\newblock \emph{Statistics}, 52\penalty0 (4):\penalty0 934--949, 2018.

\bibitem[Liu and Yu(2013)]{LY13}
H.~Liu and B.~Yu.
\newblock Asymptotic properties of {L}asso+m{LS} and {L}asso+{R}idge in sparse
  high-dimensional linear regression.
\newblock \emph{Electron. J. Stat.}, 7:\penalty0 3124--3169, 2013.

\bibitem[Lounici et~al.(2011)Lounici, Pontil, van~de Geer, and
  Tsybakov]{lounici_oracle_2011}
K.~Lounici, M.~Pontil, S.~van~de Geer, and A.~B. Tsybakov.
\newblock Oracle inequalities and optimal inference under group sparsity.
\newblock \emph{Ann. Statist.}, 39\penalty0 (4):\penalty0 2164--2204, 2011.

\bibitem[Mas(2012)]{mas_lower_2012}
A.~Mas.
\newblock Lower bound in regression for functional data by representation of
  small ball probabilities.
\newblock \emph{Electron. J. Statist.}, 6:\penalty0 1745--1778, 2012.

\bibitem[Mas and Ruymgaart(2015)]{MR15}
A.~Mas and F.~Ruymgaart.
\newblock High-dimensional principal projections.
\newblock \emph{Complex Anal. Oper. Theory}, 9\penalty0 (1):\penalty0 35--63,
  2015.

\bibitem[Meier et~al.(2008)Meier, van~de Geer, and B\"uhlmann]{MGB08}
L.~Meier, S.~van~de Geer, and P.~B\"uhlmann.
\newblock The group {L}asso for logistic regression.
\newblock \emph{J. R. Stat. Soc. Ser. B Stat. Methodol.}, 70\penalty0
  (1):\penalty0 53--71, 2008.

\bibitem[Nardi and Rinaldo(2008)]{NR08}
Y.~Nardi and A.~Rinaldo.
\newblock On the asymptotic properties of the group lasso estimator for linear
  models.
\newblock \emph{Electron. J. Stat.}, 2:\penalty0 605--633, 2008.

\bibitem[Novo et~al.(2021)Novo, Aneiros, and Vieu]{NAV21}
S.~Novo, G.~Aneiros, and P.~Vieu.
\newblock Sparse semiparametric regression when predictors are mixture of
  functional and high-dimensional variables.
\newblock \emph{TEST}, 30\penalty0 (2):\penalty0 481--504, 2021.

\bibitem[Pham et~al.(2010)Pham, Mottelet, Schoefs, Pauss, Rocher, Paffoni,
  Meunier, Rechdaoui, and Azimi]{Pham+10}
H.~Pham, S.~Mottelet, O.~Schoefs, A.~Pauss, V.~Rocher, C.~Paffoni, F.~Meunier,
  S.~Rechdaoui, and S.~Azimi.
\newblock Estimation simultan\'ee et en ligne de nitrates et nitrites par
  identification spectrale {UV} en traitement des eaux us\'ees.
\newblock \emph{L'eau, l'industrie, les nuisances}, 335:\penalty0 61--69, 2010.

\bibitem[Preda and Saporta(2005)]{preda_pls_2005}
C.~Preda and G.~Saporta.
\newblock P{LS} regression on a stochastic process.
\newblock \emph{Comput. Statist. Data Anal.}, 48\penalty0 (1):\penalty0
  149--158, 2005.

\bibitem[Ramsay and Dalzell(1991)]{RD91}
J.~O. Ramsay and C.~J. Dalzell.
\newblock Some tools for functional data analysis.
\newblock \emph{J. Roy. Statist. Soc. Ser. B}, 53\penalty0 (3):\penalty0
  539--572, 1991.
\newblock With discussion and a reply by the authors.

\bibitem[Ramsay and Silverman(2005)]{RS05}
J.~O. Ramsay and B.~W. Silverman.
\newblock \emph{Functional data analysis}.
\newblock Springer Series in Statistics. Springer, New York, second edition,
  2005.

\bibitem[Roche(2018)]{Roche15}
A.~Roche.
\newblock Local optimization of black-box function with high or
  infinite-dimensional inputs.
\newblock \emph{Comp. Stat.}, 33(1):\penalty0 467--485, 2018.

\bibitem[Sangalli(2018)]{S18}
L.~Sangalli.
\newblock The role of statistics in the era of big data.
\newblock \emph{Statist. Probab. Lett.}, 136:\penalty0 1--3, 2018.

\bibitem[Shin(2009)]{Shin09}
H.~Shin.
\newblock Partial functional linear regression.
\newblock \emph{J. Statist. Plann. Inference}, 139\penalty0 (10):\penalty0 3405
  -- 3418, 2009.

\bibitem[Shin and Lee(2012)]{SL12}
H.~Shin and M.~H. Lee.
\newblock On prediction rate in partial functional linear regression.
\newblock \emph{J. Multivariate Anal.}, 103\penalty0 (1):\penalty0 93 -- 106,
  2012.

\bibitem[S{\o}rensen et~al.(2012)S{\o}rensen, Tolver, Thomsen, and
  Andersen]{sorensen_quantification_2012}
H.~S{\o}rensen, A.~Tolver, M.~H. Thomsen, and P.~H. Andersen.
\newblock Quantification of symmetry for functional data with application to
  equine lameness classification.
\newblock \emph{J. Appl. Statist.}, 39\penalty0 (2):\penalty0 337--360, 2012.

\bibitem[Tibshirani(1996)]{Tibshirani96}
R.~Tibshirani.
\newblock Regression shrinkage and selection via the lasso.
\newblock \emph{J. Roy. Statist. Soc. Ser. B}, 58\penalty0 (1):\penalty0
  267--288, 1996.

\bibitem[Tsybakov(2009)]{T09}
A.~B. Tsybakov.
\newblock \emph{Introduction to nonparametric estimation}.
\newblock Springer Series in Statistics. Springer, New York, 2009.

\bibitem[van~de Geer(2014)]{vandeGeer14}
S.~van~de Geer.
\newblock Weakly decomposable regularization penalties and structured sparsity.
\newblock \emph{Scand. J. Stat.}, 41\penalty0 (1):\penalty0 72--86, 2014.

\bibitem[van~de Geer and B\"uhlmann(2009)]{VB09}
S.~A. van~de Geer and P.~B\"uhlmann.
\newblock On the conditions used to prove oracle results for the {L}asso.
\newblock \emph{Electron. J. Stat.}, 3:\penalty0 1360--1392, 2009.

\bibitem[Wang and Leng(2007)]{WL07}
H.~Wang and C.~Leng.
\newblock Unified {LASSO} estimation by least squares approximation.
\newblock \emph{J. Amer. Statist. Assoc.}, 102\penalty0 (479):\penalty0
  1039--1048, 2007.

\bibitem[Wang et~al.(2007)Wang, Li, and Tsai]{WLT07}
H.~Wang, R.~Li, and C.-L. Tsai.
\newblock Tuning parameter selectors for the smoothly clipped absolute
  deviation method.
\newblock \emph{Biometrika}, 94\penalty0 (3):\penalty0 553--568, 2007.

\bibitem[Wang et~al.(2021)Wang, Sun, and Wang]{WSW21}
W.~Wang, Y.~Sun, and J.~Wang.
\newblock Latent group detection in functional partially linear regression
  models.
\newblock \emph{Biometrics}, Sept. 2021.

\bibitem[Wold(1975)]{wold_soft_1975}
H.~Wold.
\newblock Soft modelling by latent variables: the non-linear iterative partial
  least squares ({NIPALS}) approach.
\newblock In \emph{Perspectives in probability and statistics (papers in honour
  of {M}. {S}. {B}artlett on the occasion of his 65th birthday)}, pages
  117--142. Applied Probability Trust, Univ. Sheffield, Sheffield, 1975.

\bibitem[Wong et~al.(2019)Wong, Li, and Zhu]{WLZ19}
R.~K.~W. Wong, Y.~Li, and Z.~Zhu.
\newblock Partially linear functional additive models for multivariate
  functional data.
\newblock \emph{Journal of the American Statistical Association}, 114\penalty0
  (525):\penalty0 406--418, 2019.

\bibitem[Xu et~al.(2020)Xu, Ding, Zhang, and Liang]{XU202044}
W.~Xu, H.~Ding, R.~Zhang, and H.~Liang.
\newblock Estimation and inference in partially functional linear regression
  with multiple functional covariates.
\newblock \emph{Journal of Statistical Planning and Inference}, 209:\penalty0
  44--61, 2020.

\bibitem[Yang and Zou(2015)]{YZ15}
Y.~Yang and H.~Zou.
\newblock A fast unified algorithm for solving group-lasso penalize learning
  problems.
\newblock \emph{Stat. Comput.}, 25\penalty0 (6):\penalty0 1129--1141, 2015.

\bibitem[Yuan and Lin(2006)]{YL2006}
M.~Yuan and Y.~Lin.
\newblock Model selection and estimation in regression with grouped variables.
\newblock \emph{J. R. Stat. Soc. Ser. B Stat. Methodol.}, 68\penalty0
  (1):\penalty0 49--67, 2006.

\bibitem[Zhao et~al.(2016)Zhao, Chung, Johnson, Moreno, and Long]{ZCJML16}
Y.~Zhao, M.~Chung, B.~A. Johnson, C.~S. Moreno, and Q.~Long.
\newblock Hierarchical feature selection incorporating known and novel
  biological information: identifying genomic features related to prostate
  cancer recurrence.
\newblock \emph{J. Amer. Statist. Assoc.}, 111\penalty0 (516):\penalty0
  1427--1439, 2016.

\end{thebibliography}

\end{document}